\newtheorem{thm}{Theorem}
\newtheorem{prop}[thm]{Proposition}
\newtheorem{lemma}[thm]{Lemma}
\newtheorem{cor}[thm]{Corollary} 
\newtheorem{conjecture}[thm]{Conjecture}
\newtheorem{Question}[thm]{Question}
\renewcommand{\Pr}{\mathbb{P}}
\newcommand{\Pdet}{\Pr\!\det}
\newcommand{\fS}{\mathfrak{S}}
\renewcommand{\l}{\left}
\renewcommand{\r}{\right}
\newcommand{\al}{\alpha}
\newcommand{\be}{\beta}
\newcommand{\ga}{\gamma}
\newcommand{\de}{\delta}
\newcommand{\eps}{\epsilon}
\newcommand{\si}{\sigma}
\newcommand{\la}{\lambda}
\newcommand{\om}{\omega}
\newcommand{\ze}{\zeta}
\newcommand{\Om}{\Omega}
\newcommand{\vp}{f}
\newcommand{\Id}{\textrm{Id}}
\renewcommand{\Im}{\textrm{Im}}
\newcommand{\cyc}{\textrm{cycle}}
\newcommand{\ds}{\displaystyle}
\newcommand{\fig}[3]{\begin{figure}[h!]\begin{center}\includegraphics[#1]{#2}\end{center}\caption{#3}\label{fig:#2}\end{figure}}
\newcommand{\CC}{\mathbb{C}}
\newcommand{\bX}{\textbf{E}}
\newcommand{\mA}{\mathcal{A}}
\newcommand{\mM}{\mathcal{R}}
\newcommand{\mP}{\mathcal{P}}
\newcommand{\mS}{\mathcal{S}}
\newcommand{\mT}{\mathcal{T}}
\newcommand{\mW}{\mathcal{W}}
\newcommand{\mX}{\mathcal{X}}
\newcommand{\mY}{\mathcal{Y}}
\newcommand{\mZ}{\mathcal{Z}}
\newcommand{\pp}{\textbf{p}}
\newcommand{\qq}{\textbf{q}}
\renewcommand{\SS}{\textbf{S}}
\title{Some probabilistic trees with algebraic roots}
\author{Olivier Bernardi and Alejandro H. Morales}
\thanks{O.B. acknowledges support from NSF grant DMS-1308441, and ERC ExploreMaps.}
\thanks{A.M. acknowledges support from a CRM-ISM postdoctoral fellowship.}
\date{\today}
\begin{document}
\setcounter{tocdepth}{2}

\begin{abstract}
In this article we consider several probabilistic processes defining random grapha. One of these processes appeared recently in connection with a factorization problem in the symmetric group.
For each of the probabilistic processes, we prove that the probability for the random graph to be a tree has an extremely simple expression, which is independent of most parameters of the problem. This raises many open questions.
\end{abstract}

\maketitle

\section{Introduction: an example} \label{sec:intro}

In this paper we consider several probabilistic processes defining a random graph.   These processes were originally motivated by factorizations problems in the symmetric group investigated in \cite{OB-AM:constellations}. Our main result is a formula, for each of the probabilistic processes, of the probability that the random graph is a tree. This probability formula turns out to be surprisingly simple, and is in particular independent of most parameters of the processes. This is reminiscent of the result of Kenyon and Winkler~\cite{Kenyon:branched-polymers} about 2-dimensional Branched polymers. We conjecture that more results of this type should hold, but we were not able to prove them.

Before describing our results in full details, let us describe one particular case. Let $k$ be a positive integer. Given a tuple  $\SS=(S_1,\ldots,S_{k-1})$ of $k-1$ proper subsets of $[k]:=\{1,2,\ldots,k\}$,  we define $G(\SS)$ as the digraph having vertex set $[k]$ and arc set  $\{a_1,\ldots,a_{k-1}\}$, where the arc $a_i$ has origin $i$ and endpoint the unique integer $j$ in $[k]\setminus S_i$ such that  $\{i+1,i+2,\ldots,j-1\}\subseteq S_i$, with the integers considered cyclically modulo~$k$.  For instance, if $k=7$ and $S_5=\{1,3,6,7\}$, then the arc $a_5$ has origin~5 and endpoint~2. In Figure~\ref{fig:first-example} we have drawn some digraphs $G(\SS)$ in the case $k=3$. We now fix a tuple $\pp=(p_1,\ldots,p_k)$ of non-negative integers, and choose uniformly at random a tuple $\SS=(S_1,\ldots,S_{k-1})$ of $k-1$ proper subsets of $[k]$ such that for all $j\in[k]$ the integer $j$ is contained in exactly $p_j$ of the subsets $S_1,\ldots,S_{k-1}$. This gives a random digraph $G(\SS)$. One of the results proved in this paper is that the probability that $G(\SS)$ is a tree (oriented toward the vertex $k$) is equal to $1-p_k/(k-1)$. This result is unexpectedly simple, especially because it does not depend on the parameters $p_1,\ldots,p_{k-1}$.


\fig{width=\linewidth}{first-example}{We represent above the three situations for which the digraph $G(\SS)$ is a tree, with $k=3$ and $\SS=(S_1,S_2)$.}

Let us investigate in more detail the case $k=3$ of the aforementioned result. The situation is represented in Figure~\ref{fig:first-example}. By definition, the tuple $\SS=(S_1,S_2)$ is a pair of proper subsets of $\{1,2,3\}$, and there are three possible trees with vertex set $\{1,2,3\}$ (in general, there are $k^{k-2}$ possible Cayley trees). Let $A$, $B$, and $C$ be respectively the events leading to the trees represented in parts (a), (b), and (c) of Figure~\ref{fig:first-example}. For instance, 
$$C=\{2\in S_1\} ~\cap~ \{3\notin S_1\} ~\cap~ \{3\in S_2\} ~\cap~ \{1\notin S_2\}.$$
Now it is not hard to check that the event $C$ has the same probability as the event 
$$C'=\{2\in S_1\} ~\cap~\{3\notin S_2\}~\cap~ \{3\in S_1\}~\cap~ \{1\notin S_1\}.$$ 
Moreover the events $A$, $B$ and $C'$ are disjoint and their union is
$$A\cup B \cup C'=\big(\{2\notin S_1\} \cup \{2\in S_1 ~\cap~3\notin S_1\} \cup \{2\in S_1~\cap~ 3\in S_1 ~\cap~ 1\notin S_1\}\big) ~\cap~ \{3\notin S_2\}.$$
But since $S_1$ is by definition a \emph{proper} subset of $\{1,2,3\}$, the first condition in the above clause is always satisfied. It follows that 
$$\Pr(A\cup B\cup C)=\Pr(A\cup B\cup C')=\Pr(3\notin S_2)=1-p_3/2,$$
as claimed. Observe that the individual probabilities of the trees represented in Figure~\ref{fig:first-example} \emph{do} depend on the value of $p_1$ and $p_2$, but the sum of these probabilities is independent of $p_1$ and $p_2$. 


The rest of the paper is organized as follows. In Section \ref{sec:results}, we state the main results of the paper.  In Section~\ref{sec:matrix-tree}, we derive a generalization of the matrix-tree theorem tailored to our needs: it allows us to express the probability that our random graphs are trees as the probability of a ``determinant of a matrix of events''. In Section~\ref{sec:computing-determinant} we simplify the matrices of events corresponding to our different random processes. In Section~\ref{sec:computing-determinant}, we compute the determinant of the matrices of events. This computation uses some sign reversing involutions. We conclude in Section~\ref{sec:conj-gamma} with some conjectures about another random process, and some open questions. 


\section{Main results}\label{sec:results}
In this section we fix some notation and state our main results. 
We denote by $|A|$ the cardinality of a set $A$. We denote  $A\uplus B$ the disjoint union of two sets $A,B$. For a positive integer $k$, we denote by $[k]$ the set of integers $\{1,2,\ldots,k\}$. For $i,j\in[k]$, we denote by $]i,j]$ the set of integers $\{i+1,i+2,\ldots,j\}$, where integers are considered cyclically modulo $k$. For instance $]i,i]=\emptyset$, $]i,i+1]=\{i+1\}$, and $]i,i-1]=[k]\setminus\{i\}$.
For an integer $r$ and a tuple $\pp=(p_1,\ldots,p_k)$ of non-negative integers, we denote by  $\mS_{\pp,r}$ the set of tuples $(S_1,\ldots,S_{r})$ such that for all $i\in[r]$, $S_i$ is a subset of $[k]$ and for all $j\in[k]$ the integer $j$ is contained in exactly $p_j$ of the subsets $S_1,\ldots,S_r$. We also denote by $\mM_{\pp,r}$ the set of tuples $(S_1,\ldots,S_{r})\in \mS_{\pp,r}$ such that $S_i\neq [k]$ for all $i\in[r]$.

We now define three ways of associating a digraph to a an element in $\mS_{\pp,r}$, using three mappings $\al,\be,\ga$. The mappings $\al,\be,\ga$ takes as argument an integer $i\in [k]$ and a subset $S\subseteq [k]$ and are defined as follows: 
\begin{itemize}
\item $\al(i,S)=i$ if $S=[k]$, and otherwise $\al(i,S)$ is the integer $j\in[k]$ such that $j\notin S$ but $]i,j-1]\subseteq S$.
\item $\be(i,S)=i$ if $S=[k]$, and otherwise $\be(i,S)$ is the integer $j\in[k]$ such that $j+1\notin S$ but $]i,j]\subseteq S$.
\item $\ga(i,S)=i$ if $S=[k]$, $\ga(i,S)=i-1$ if $i\in S$, and otherwise $\ga(i,S)$ is the integer $j\in[k]$ such that $j+1\notin S$ but $]i,j]\subseteq S$. 
\end{itemize}
The mappings $\al,\be,\ga$ are represented in Figure~\ref{fig:alpha-beta-gamma}.
\fig{width=\linewidth}{alpha-beta-gamma}{Rules $\al,\be,\ga$ for creating an arc of the complete graph $K_{k}$ (an informal description of the rule is given between quotation marks where $\overline{i}$ means that $i \not\in S$).}

We now  use the mappings $\al,\be,\ga$ to define digraphs. Let $\SS=(S_1,\ldots,S_r)\in\mS_{\pp,r}$, and let $\vp$ be a surjection from $[k-1]$ to $[r]$. For $\ze\in\{\al,\be,\ga\}$, we define $G_\ze(\SS,\vp)$ to be the digraph with vertex set $[k]$ and arc set $A=\{a_1,\ldots,a_{k-1}\}$ where $a_i=(i,\ze(i,S_{\vp(i)}))$ for all $i\in[k-1]$. For instance, the digraph $G(\SS)$ defined in Section \ref{sec:intro} corresponds to the case $r=k-1$ and $G(\SS)=G_\al(\SS,\Id)$ where $\Id$ is the identity mapping from $[k-1]$ to $[k-1]$. Observe that the graph $G_\ze(\SS,\vp)$ has loops unless  $\SS\in\mM_{\pp,r}$.

We are now interested in the probability that the digraph $G_\ze(\SS,\vp)$ is a tree. Observe that in this case the tree is oriented toward the vertex $k$ (since every vertex in $[k-1]$ has one outgoing arc). Our main result is the following.
\begin{thm}\label{thm:main}
Let $k$ and $r$ be positive integers such that $r<k$, and let $\pp=(p_1,\ldots,p_k)$ be a tuple of non-negative integers.
Let $\SS=(S_1,\ldots,S_r)$ be a uniformly random element of $\mM_{\pp,r}$ (supposing that this set is non-empty), and let $\vp$ be a uniformly random surjection from $[k-1]$ to $[r]$ independent from~$\SS$. For $\ze\in\{\al,\be,\ga\}$, let $P_\ze(\pp,r)$ be the probability that the random digraph $G_\ze(\SS,\vp)$ is a tree. Then 
\begin{itemize}
\item[(a)] $\ds P_\al(\pp,r)=1-p_k/r$. This is equal to the probability that $k\notin S_1$.
\item[(b)] $\ds P_\be(\pp,r)=|\mM_{\qq,r-1}|/|\mM_{\pp,r}|$ where $\qq=(p_1,p_2-1,p_3-1,\ldots,p_k-1)$. This is equal to the probability that $S_1=\{2,3,\ldots,k\}$.
\item[(c)] $\ds P_\ga(\pp,r)=\sum_{j=1}^k|\mM_{\qq^{(j)},r-1}|/|\mM_{\pp,r}|$, where $\qq^{(j)}=(q_1^{(j)},\ldots,q_k^{(j)})$ and for all $i,j\in[k]$, $q_i^{(j)}=p_{i}-1$ if $i\neq j$ and $q_j^{(j)}=p_{j}$. This is equal to the probability that $|S_1|=k-1$.
\end{itemize}
\end{thm}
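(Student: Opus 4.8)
The plan is to turn each identity into the evaluation of an expected determinant, and then to collapse that determinant using the ``interval'' structure of $\al,\be,\ga$. Fix $\ze\in\{\al,\be,\ga\}$ and, for a configuration $(\SS,\vp)$, let $t_i=\ze(i,S_{\vp(i)})\in[k]$ be the target of the arc leaving $i\in[k-1]$, vertex $k$ being a sink. Each vertex of $[k-1]$ has out-degree one, so $G_\ze(\SS,\vp)$ is a tree oriented toward $k$ exactly when the partial-function digraph $i\mapsto t_i$ on $[k-1]$ is acyclic. Encoding the arcs by the random matrix $P=P_\ze$ on $[k-1]\times[k-1]$ with $P_{i,j}=\mathbf 1[t_i=j]$, the expansion of the determinant reads $\det(I-P)=\sum_{\mC}(-1)^{c(\mC)}\prod_{(i,j)\in\mC}P_{i,j}$, summed over collections $\mC$ of vertex-disjoint directed cycles (loops allowed) on $[k-1]$, with $c(\mC)$ the number of cycles. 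For a fixed configuration the nonzero terms are indexed by the subsets of the (automatically vertex-disjoint) cycle set of $i\mapsto t_i$, so the sum equals $(1-1)^{m}=\mathbf 1[m=0]$ where $m$ is the number of cycles, i.e.\ $\det(I-P)=\mathbf 1[G_\ze\text{ is a tree}]$. Taking expectations and using linearity,
\begin{equation}\label{eq:plan-signed}
P_\ze(\pp,r)=\mathbb{E}\big[\det(I-P)\big]=\sum_{\mC}(-1)^{c(\mC)}\,\Pr\Big[\textstyle\bigcap_{(i,j)\in\mC}\{\ze(i,S_{\vp(i)})=j\}\Big],
\end{equation}
the empty collection contributing the term $+1$; abbreviate the bracketed event by $E_\mC$. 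The three identities then say that this alternating sum collapses to a single marginal probability on $S_1$.

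The engine for the collapse is that the defining events are nested ``runs''. For $\al$ one has $\{\al(i,S)=j\}=\{\,]i,j-1]\subseteq S\,\}\cap\{\,j\notin S\,\}$, and for $\be$, $\{\be(i,S)=j\}=\{\,]i,j]\subseteq S\,\}\cap\{\,j+1\notin S\,\}$; thus $\be(i,S)=\al(i,S)-1$, while $\ga(i,S)=\be(i,S)$ when $i\notin S$ and $\ga(i,S)=i-1$ when $i\in S$. Writing the cumulative events $R_{i,j}:=\{\,]i,j-1]\subseteq S\,\}$, which are monotone in $j$, every indicator factors as a discrete difference, $\mathbf 1[\al(i,S)=j]=\mathbf 1[R_{i,j}]-\mathbf 1[R_{i,j+1}]$. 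My first move would be to substitute these differences into $I-P$ and perform the matching column operations, which preserve the determinant but should reduce $P_\ze$ to a much sparser matrix of cumulative ``run'' events, drastically cutting the collections $\mC$ that survive in \eqref{eq:plan-signed}. I would also record the two sources of dependence: the uniform law on $\mM_{\pp,r}$ is invariant under permuting $(S_1,\dots,S_r)$, and $\vp$ is an independent uniform surjection, so each $S_{\vp(i)}$ has the law of $S_1$ while the joint law is governed by which arcs of $\mC$ are forced to read the same subset.

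With the matrix simplified, I would evaluate the surviving alternating sum by a sign-reversing involution on the pairs $(\mC,(\SS,\vp))$ with $(\SS,\vp)\in E_\mC$, weighted by $(-1)^{c(\mC)}$ times the uniform probability. The natural move is to locate a canonical site---the least vertex lying on a cycle of $\mC$, or the first tight run-condition---and to toggle a single membership of one integer in one subset $S_m$, thereby creating or destroying one cycle and flipping the parity of $c(\mC)$ while preserving the weight. The design goal is that the only fixed points are the pairs with $\mC=\emptyset$ satisfying the designated boundary condition on $S_1$: $\{k\notin S_1\}$ for $\al$, $\{S_1=\{2,\dots,k\}\}$ for $\be$, and $\{|S_1|=k-1\}$ for $\ga$ (the back-arcs of $\ga$ producing the sum over $j$ in part (c)). The surviving probability is then the stated one, computed by exchangeability ($\Pr[k\notin S_1]=1-p_k/r$) or by direct counting ($\Pr[S_1=\{2,\dots,k\}]=|\mM_{\qq,r-1}|/|\mM_{\pp,r}|$, and similarly for $\ga$).

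The main obstacle lies entirely in the probability weights of \eqref{eq:plan-signed}, not in the cycle combinatorics. First, because $\vp$ couples all arcs reading a common subset, $\Pr[E_\mC]$ does not factor over the arcs of $\mC$; the involution must act inside one subset at a time and must be checked to remain well defined on \emph{surjections} $[k-1]\to[r]$, not merely functions. Second, the properness constraint $S_m\neq[k]$ is exactly what forbids certain toggles and thereby selects the boundary configurations that survive---it is the origin of the restricted counts $|\mM_{\qq,r-1}|$ and $|\mM_{\qq^{(j)},r-1}|$ in (b) and (c), whereas the factor $p_k/r$ in (a) comes from exchangeability of the surviving event rather than from the involution. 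I expect the crux to be proving that a single involution respects surjectivity and properness simultaneously, so that no cycle-carrying configuration and no cycle-free configuration violating the boundary condition survives; the same involution, rerouted by the shift $\be=\al-1$ and the extra back-arcs of $\ga$, should then yield (b) and (c).
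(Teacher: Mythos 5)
Your framework coincides with the paper's: writing the tree probability as the expectation of $\det(I-P)$, i.e.\ as a signed sum over collections of disjoint cycles of probabilities of arc events, is exactly the probabilistic matrix-tree theorem the paper establishes (Proposition~\ref{prop:matrix-tree}), and your telescoping of the interval events followed by column operations mirrors the paper's reduction to the matrices $M_\ze$ (Lemma~\ref{lem:proba-as-determinant} and Proposition~\ref{prop:second-det}). The gap is everything after that: the sign-reversing involution, which is the entire technical content of the paper's proof (Section~\ref{sec:computing-determinant}), is left in your text as a ``design goal.'' Moreover, the one concrete mechanism you propose --- toggling a single membership of one integer in one subset $S_m$ --- is not even a map of the sample space: changing one membership changes the number of subsets containing that integer, so the image tuple violates the occupancy constraints $\pp$ and leaves $\mM_{\pp,r}$ (and $\mS_{\pp,r}$) altogether. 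The only count-preserving move is to \emph{exchange} the presence or absence of an integer between two of the subsets, which is what the paper does in the proof of Lemma~\ref{lem:X0}; and that exchange is legitimate only when no other arc reading the same subset forces the exchanged element --- precisely the coupling through $\vp$ that you correctly flag as the crux but do not resolve.

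The paper's resolution is a genuine case analysis, not a refinement of a single toggle: after a row-by-row decomposition into matrices $N^{a,D}$, the triples $(\SS,f,\pi)$ are partitioned into four classes $\mW,\mX,\mY,\mZ$ according to whether $f(a-1)=f(a)$ and whether the element $a$ is forced by other arcs reading $S_{f(a-1)}$ or $S_{f(a)}$; the involutions on $\mW$, $\mY$, $\mZ$ do not touch $\SS$ at all but compose $\pi$ with transpositions chosen via a further three-way sub-partition ($\mY_1,\mY_2,\mY_3$, with a carefully selected index $b$ and nontrivial verifications such as~\eqref{eq:positionsClaim2}), and only on $\mX$ does the involution move an element between subsets, simultaneously composing both $f$ and $\pi$ with $(a-1,a)$ to preserve surjectivity and the event conditions. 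The paper even remarks, before Lemma~\ref{lem:pdet=0}, that there exist matrices with the same two critical rows but nonzero $\Pr$-determinant, so no generic, row-local involution of the kind you sketch can exist; the involution must exploit the global structure of the surviving events. Your proposal identifies the right strategy and the right obstacles, but the object whose existence carries the whole proof is never constructed, so parts (a), (b), (c) remain unproved.
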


Observe that the cardinality of $\mM_{\pp,r}$ appearing in Theorem~\ref{thm:main} can be expressed as the coefficient of $x_1^{p_1}x_2^{p_2}\cdots x_k^{p_k}$ in the polynomial $\left(\prod_{i=1}^k(x_i+1)-\prod_{i=1}^kx_i\right)^r$. 
The case~(c) of Theorem~\ref{thm:main} was needed to complete the  combinatorial proof described in~\cite{OB-AM:constellations} of Jackson's formula~\cite{DMJ}. Before embarking on the proof of Theorem~\ref{thm:main}, we make a few remarks.\\

\noindent \textbf{Remark 1.} Theorem~\ref{thm:main} can be stated in terms of uniformly random tuples $\SS$ in $\mS_{\pp,r}$ instead of in $\mM_{\pp,r}$. More precisely, for $\ze\in\{\al,\be,\ga\}$, if $\SS$ is a uniformly random tuple in $\mS_{\pp,r}$, and $f$ is a uniformly random surjection from $[k-1]$ to $[r]$ independent from $\SS$, then the probability for the graph $G_\ze(\SS,f)$ to be a tree is $\frac{|\mM_{\pp,r}|}{|\mS_{\pp,r}|}P_\ze(\pp,r)$. This is simply because the graph $G_\ze(\SS,\vp)$ is never a tree if $\SS\in\mS_{\pp,r}\setminus \mM_{\pp,r}$.\\

\noindent \textbf{Remark 2.} In the case $r=k-1$, the result of Theorem~\ref{thm:main}, can be stated without referring to a random surjection $f$. Indeed, for $\SS\in \mM_{\pp,r}$ and $\ze\in\{\al,\be,\ga\}$, let us define $G_\ze(\SS)$ as the graph $G_\ze(\SS,\Id)$ where $\Id$ is the identity mapping from $[k-1]$ to $[k-1]$. Then the probability $P_\ze(\pp,k-1)$ that the graph $G_\ze(\SS)$ is a tree has the same expression as in Theorem \ref{thm:main}. For instance, the probability that $G_\al(\SS)$ is a tree is $1-p_k/(k-1)$, as claimed in Section \ref{sec:intro}. Indeed, in the particular case $r=k-1$ of Theorem~\ref{thm:main}, the surjection $f$ would be a bijection from $[k-1]$ to $[k-1]$ independent from $\SS$. But then the tuple $(S_{f(1)},\ldots,S_{f(k-1)})$ has the same distribution as $\SS=(S_1,\ldots,S_{k-1})$, hence the surjection $f$ does not affect probabilities.\\

\noindent \textbf{Remark 3.} The results in Theorem~\ref{thm:main} would hold for any probability distribution on the tuples $\SS=(S_1,\ldots,S_k)$ of proper subsets of $[k]$, provided that the probability of a tuple only depends on the total number of occurrences of each integer $i\in[k]$. For instance, the probability that $G_\al(\SS,\vp)$ is a tree would still be equal to the probability that $k\notin S_1$ for such a probability distribution. This result follows from Theorem~\ref{thm:main} since one can always condition on the total number of occurrences of each integer $i\in[k]$.\\

\noindent \textbf{Remark 4.} A slightly weaker version of Theorem~\ref{thm:main} can be obtained by not requiring the function $f$ to be surjective. More precisely, for $\ze\in\{\al,\be,\ga\}$ and for any positive integer $r$, if $\SS$ is a uniformly random tuple in $\mM_{\pp,r}$, and $f$ is a uniformly random function from $[k-1]$ to $[r]$ independent from $\SS$, then the probability $P_\ze(\pp,r)$ that $G_\ze(\SS,f)$ has the same expression as in Theorem \ref{thm:main}. For instance, the probability that $G_\al(\SS,f)$ is a tree is $1-p_k/r$. Indeed, this result follows from Theorem~\ref{thm:main} by conditioning on the cardinality of the image $\Im(f)$ of the function $f$, and by the number of occurrences of each integer $i\in[k]$ in the subsets $(S_{j})_{j\in \Im(f)}$. It is actually this version of Theorem~\ref{thm:main} (in the case $\ze=\ga$) which was needed in~\cite{OB-AM:constellations}. \\


\section{A probabilistic analogue of the matrix-tree theorem}\label{sec:matrix-tree}
The \emph{matrix-tree theorem} is a classical result giving the number of oriented spanning trees of a graph as a determinant; see e.g.~\cite{EC2}. In order to prove Theorem~\ref{thm:main}, it is tempting to consider the trees on the vertex set $[k]$ as the spanning trees of the complete graph $K_k$, and apply a suitable analogue of the matrix-tree theorem. In this section we develop the framework necessary to establish this suitable analogue.

We first recall the matrix-tree theorem (in its \emph{weighted, directed} version). Let $K_n$ denote the \emph{complete digraph} having vertex set $[n]$ and arc set $A=\{(i,j),~i\neq j\}$. We call \emph{spanning tree of $K_n$ rooted at $n$} a set of arcs $T\subseteq A$ not containing any cycle and such that every vertex $i\in[n-1]$ is incident to exactly one outgoing arc in $T$ (this is equivalent to asking that $T$ is a spanning tree of $K_n$ ``oriented toward'' the root vertex $n$). We denote by $\mT_n$ the set of spanning tree of $K_n$ rooted at $n$.
Given some \emph{weights} $w(i,j)$ (taken in a commutative ring) for the arcs $(i,j)\in A$, one defines the \emph{weight} of a tree $T\in \mT_n$ as $\ds w(T)=\prod_{(i,j)\in T}w(i,j)$. The \emph{matrix-tree theorem} states that 
\begin{equation}\label{eq:usual-matrix-tree}
\sum_{T\in \mT_n}w(T)=\det(L),
\end{equation}
where  $L=(L_{i,j})_{i,j\in [n-1]}$ is the \emph{reduced Laplacian matrix}, defined by $L_{i,j}=-w(i,j)$ if $i\neq j$ and $L_{i,i}=\sum_{j\in [n]\setminus \{i\}}w(i,j)$.
Observe that the matrix-tree theorem gives results about the spanning trees of any digraph $G$ with vertex set $[n]$, because one can restrict its attention to the spanning trees of $G$ simply by setting $w(i,j)=0$ for all arcs $(i,j)$ not in $G$.  Observe also that the weights $w(n,j)$ are actually irrelevant in~\eqref{eq:usual-matrix-tree}. 



We will now derive a generalization of the matrix-tree theorem. Let $(\Om,\mA,\Pr)$ be a probability space, where $\Om$ is the sample set, $\mA$ is the set of events (which is a $\si$-algebra on $\Om$), and $\Pr$ is the probability measure. In our applications, $\Om$ will be a finite set and $\mA$ will be the powerset $\mP(\Om)$. Let $n$ be a positive integer and let $E=(E_{i,j})_{i\in[n-1],j\in[n]}$ be a matrix whose entries $E_{i,j}\in \mA$ are events.  For a tree $T\in \mT_n$, we define the \emph{probability of $T$} as
\begin{equation}\nonumber
\Pr(T):=\Pr\Big(\bigcap_{(i,j)\in T}E_{i,j}\Big).
\end{equation}


Now we aim at expressing $\sum_{T\in \mT_n}\Pr(T)$ as some kind of determinant, and this requires some notation. Let $\CC[\mA]$ be the set of (formal) finite linear combinations of events, with coefficients in the field $\CC$ (i.e., the free $\CC$-module with basis $\mA$). The elements of $\CC[\mA]$ are called \emph{generalized events} and are of the form $\la_1 A_1+\cdots +\la_s A_s$ with $\la_1,\ldots,\la_s\in\CC$ and $A_1,\ldots,A_s\in \mA$.
We then define the ring $R=(\CC[\mA],+,\cap)$. Here the intersection operation ``$\cap$'' is defined to act distributively on $\CC[\mA]$, that is, for all $A,B,C\in \mA$, we set $A\cap (B + C)=(A\cap B)+ (A\cap C)$, and moreover for all $\la\in \CC$ we set $A\cap (\la \cdot B)=\la\cdot (A\cap B)$. 
We define the \emph{determinant} of a matrix of generalized events $M=(M_{i,j})_{i,j\in [n]}$ as
$$\det(M)=\sum_{\pi\in \fS_{n}}\eps(\pi)\,M_{1,\pi(1)}\cap M_{2,\pi(2)}\cap \cdots \cap M_{n,\pi(n)},$$
where $\fS_{n}$ denotes the set of permutations of $[n]$, and $\eps(\pi)$ is the sign of the permutation~$\pi$.

We also extend the probability measure $\Pr$ from $\mA$ to $\CC[\mA]$ by linearity. More concretely, we set $\Pr(\la \cdot A)=\la\cdot \Pr(C)$ and $\Pr(A+B)=\Pr(A)+\Pr(B)$. We call \emph{$\Pr$-determinant} of a  matrix of generalized events $M=(M_{i,j})_{i,j\in [n]}$, and denote by $\Pdet(M)$, the probability of $\det(M)$, that is, 
$$\Pdet(M)=\sum_{\pi\in \fS_{n}}\eps(\pi)\,\Pr\l(M_{1,\pi(1)}\cap M_{2,\pi(2)}\cap \cdots \cap M_{n,\pi(n)}\r)$$

We can now state our generalization of the matrix-tree theorem. For a matrix  $E=(E_{i,j})_{i\in[n-1],j\in[n]}$  of generalized events, we define its \emph{reduced Laplacian matrix} $L=(L_{i,j})_{i,j\in[n-1]}$ by setting for all $i\in[n-1]$ $L_{i,j}=-E_{i,j}$ if $i\neq j$ and $L_{i,i}=\sum_{j\neq i}E_{i,j}$. 
\begin{prop}\label{prop:matrix-tree}
Let $E=\l(E_{i,j}\r)_{i\in[n-1],j\in[n]}$ be a matrix of generalized events and let $L=\l(L_{i,j}\r)_{i,j\in [n-1]}$ be its reduced Laplacian matrix. Then 
$$\sum_{T\in \mT_n}\Pr(T)=\Pdet(L).$$
\end{prop}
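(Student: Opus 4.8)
The plan is to recognize that the statement is nothing more than the classical weighted, directed matrix-tree theorem \eqref{eq:usual-matrix-tree} applied over a suitable commutative ring, followed by a single application of the (linear) probability map. Concretely, I would first check that $R=(\CC[\mA],+,\cap)$ is a commutative ring with unit, the unit being the sure event $\Om\in\mA$ (since $\Om\cap A=A$ for every $A\in\mA$). Commutativity and associativity of $\cap$ hold on the basis $\mA$ because intersection of sets is commutative and associative, and they extend to all of $\CC[\mA]$ because $\cap$ was defined to act $\CC$-bilinearly; distributivity is built into the definition. Thus $R$ is an honest commutative $\CC$-algebra, and the determinant $\det(M)$ defined in the text is exactly the Leibniz determinant of a matrix with entries in $R$.

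Next I would invoke the matrix-tree theorem \eqref{eq:usual-matrix-tree}, which was stated for weights in an arbitrary commutative ring, with the weights taken in $R$ by setting $w(i,j)=E_{i,j}$ for $i\in[n-1]$ and $j\in[n]\setminus\{i\}$. (The weights $w(n,j)$ are irrelevant, matching the fact that $E$ has no $n$-th row.) This yields the identity
$$\sum_{T\in\mT_n}\bigcap_{(i,j)\in T}E_{i,j}=\det(L)$$
as an equality of generalized events in $R$, where the product $w(T)$ is the $\cap$-product $\bigcap_{(i,j)\in T}E_{i,j}$ and $L$ is precisely the reduced Laplacian of the statement: its off-diagonal entries are $-E_{i,j}$ and its diagonal entries are $\sum_{j\neq i}E_{i,j}$.

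Finally I would apply the probability measure $\Pr$, extended $\CC$-linearly to $\CC[\mA]$, to both sides of this identity. On the left, linearity gives $\sum_{T\in\mT_n}\Pr\big(\bigcap_{(i,j)\in T}E_{i,j}\big)=\sum_{T\in\mT_n}\Pr(T)$ by the very definition of $\Pr(T)$. On the right, applying $\Pr$ to $\det(L)$ is by definition $\Pdet(L)$. Together these give the claimed equality $\sum_{T\in\mT_n}\Pr(T)=\Pdet(L)$.

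I do not expect a serious obstacle: no new combinatorics is required, since the tree enumeration is entirely outsourced to \eqref{eq:usual-matrix-tree}. The only points that demand care are the bookkeeping that $R$ genuinely satisfies the ring axioms (so that \eqref{eq:usual-matrix-tree} may be invoked verbatim) and that the specialization $w(i,j)=E_{i,j}$ reproduces exactly the matrix $L$ and the determinant of the statement. One subtlety worth flagging is that $\CC[\mA]$ is far from reduced---for instance the basis element $\emptyset$ is a nonzero idempotent distinct from the additive zero $0$---but this is harmless, precisely because \eqref{eq:usual-matrix-tree} holds over \emph{every} commutative ring, irrespective of such pathologies.
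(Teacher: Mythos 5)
Your proof is correct, but it is organized differently from the paper's. The paper does not invoke the classical theorem \eqref{eq:usual-matrix-tree} as a black box; instead it re-runs a combinatorial proof (following Zeilberger) inside the ring $(\CC[\mA],+,\cap)$: expanding $\det(L)$ yields a signed sum over edge-colored digraphs whose blue arcs form disjoint directed cycles, and a blue/red recoloring involution cancels every term containing a cycle, leaving exactly $\sum_{T\in\mT_n}\bigcap_{(i,j)\in T}E_{i,j}$; taking probabilities then finishes. Your route --- check that $(\CC[\mA],+,\cap)$ is a commutative ring with unit $\Om$, specialize the commutative-ring form of \eqref{eq:usual-matrix-tree} to the weights $w(i,j)=E_{i,j}$, and push the resulting identity in $\CC[\mA]$ through the $\CC$-linear map $\Pr$ --- is legitimate precisely because the paper states \eqref{eq:usual-matrix-tree} for weights in an arbitrary commutative ring (such identities are universal: they hold for indeterminates over $\ZZ$ and hence under any specialization), and your specialization does reproduce the reduced Laplacian and the determinant of the statement. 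What your packaging buys is modularity and brevity: no combinatorics is repeated. What the paper's version buys is self-containedness; moreover, its concluding remark (for any commutative ring $R$, abelian group $C$, and additive homomorphism $P\colon R\to C$, one has $\sum_{T\in\mT_n}P\bigl(\bigotimes_{(i,j)\in T}E_{i,j}\bigr)=P(\det L)$) is exactly your argument stated abstractly, so the authors clearly had this viewpoint and simply chose to sketch the underlying proof rather than outsource it. One terminological quibble: the existence of a nontrivial idempotent such as $\emptyset$ does not make $\CC[\mA]$ non-reduced (reducedness concerns nilpotents; in fact for finite $\mA$ this semilattice algebra is isomorphic to a product of copies of $\CC$, hence reduced), but as you yourself note, nothing in the argument depends on this.
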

Observe that if the events $E_{i,j}$ are all independent, then $\ds \Pr(T)=\prod_{(i,j)\in T}\Pr(E_{i,j})$, and $\ds \Pdet(L)=\det\l(\Pr(L_{i,j})\r)_{i,j\in[n-1]}$ so that Proposition~\ref{prop:matrix-tree} reduces to the usual matrix-tree theorem given in~\eqref{eq:usual-matrix-tree} for the weights  $w(i,j)=\Pr(E_{i,j})$.

\begin{proof} 
The known combinatorial proofs of the matrix-tree theorem actually extend almost verbatim to give Proposition~\ref{prop:matrix-tree}. We sketch one such proof, following~\cite{Zeilberger:combinatorial-matrix}, mainly for the reader's convenience. 

By definition,
\begin{eqnarray}
\det(L)&=&\sum_{\pi\in\fS_{n-1}}\eps(\pi)\bigcap_{i\in[n]} L_{i,\pi(i)}\nonumber\\
&=&\sum_{\pi\in\fS_{n-1}}\eps(\pi)\l(\bigcap_{i\in[n],~\pi(i)\neq i} -E_{i,\pi(i)}\r)\cap \l(\bigcap_{i\in[n],~\pi(i)=i}~\sum_{j\neq i} E_{i,j}\r).\label{eq:RHSdet}
\end{eqnarray}
Expanding the right-hand side of~\eqref{eq:RHSdet} leads to a sum over a set that we now describe. Let $B_n$ be the set of digraphs  with vertex set $[n]$ having exactly one outgoing (non-loop) arc at each vertex $i\neq n$, and no outgoing arc at vertex $n$. Let $C_n$ be the set of edge-colored digraphs that can be obtained from digraphs in $B_n$ by coloring edges in either  in \emph{blue} or \emph{red}, in such a way that the blue edges form a disjoint union of simple directed cycles.
We claim that 
\begin{eqnarray}
\det(L)=\sum_{C\in C_n}(-1)^{\cyc(C)}\bigcap_{(i,j)\in C} E_{i,j},\nonumber
\end{eqnarray}
where $\cyc(C)$ is the number of blue cycles of the colored digraph $C$. Indeed, the blue arcs of an element $C\in C_n$ encode a permutation $\pi$ (the blue arcs are $\{(i,\pi(i)),~\textrm{ for all } i  \neq \pi(i)\}$),  the red arcs of $C$ encode a summand in the expansion of $\ds \bigcap_{i\in[n],i=\pi(i)}\sum_{j\neq i} E_{i,j}$ (the red arcs form a set of the form $\{(i,j),~\textrm{ for all }  i=\pi(i) \textrm{ with } j \neq i\}$), and the factor $(-1)^{\cyc(C)}$ is equal to $\ds \eps(\pi)\cdot(-1)^{\#\{i,~ \pi(i)\neq i\}}$.

Now, the digraphs in $B_n$ are all the graphs made of a (possibly empty) tree oriented toward the vertex $n$ together with a (possibly empty) set of directed cycles on which are possibly attached oriented trees. Moreover, if one sums the contribution of all the elements $C\in C_n$ corresponding to the same underlying graph $B\in B_n$ one gets 0 if there are some directed cycles (because these cycles can be colored either blue or red), and $\ds \cap_{(i,j)\in B} E_{i,j}$ otherwise (because all the edges have to be red).
This gives,
$$\det(L)=\sum_{C\in C_n}(-1)^{\cyc(C)}\bigcap_{(i,j)\in C} E_{i,j}=\sum_{C\in T_n}\bigcap_{(i,j)\in C} E_{i,j}.$$
and taking probability on both sides gives $\ds\Pdet(L)=\sum_{C\in T_n}\Pr(C)$.
\end{proof}

\noindent \textbf{Remark.} Observe that more generally, for any commutative ring $(R,+,\otimes)$, any abelian group $(C,+)$, and any homomorphism $P$ from $(R,+)$ to $(C,+)$, there is an analogue of the matrix-tree theorem which holds with the same proof: 
$$\sum_{T\in \mT_n}P(\bigotimes_{(i,j)\in T}E_{i,j})=P(\det(L)).$$

Before closing this section we define an equivalence relation on the set $\CC[A]$ of generalized events. 
Let $A_1,\ldots,A_m$, $B_1,\ldots,B_n$ be events in $\mA$, and let $\la_1,\ldots,\la_m$, $\nu_1\ldots,\nu_n$ be complex numbers. We say that the generalized event $A=\sum_{i=1}^m \la_i A_i$ and $B=\sum_{i=1}^n \nu_i B_i$ are \emph{equivalent}, and we denote this $A\sim B$, if for all $\om\in\Om$, the quantities $A_\om=\sum_{i=1}^m \la_i\textbf{1}_{\om\in A_i}$ and $B_\om=\sum_{i=1}^n \nu_i\textbf{1}_{\om\in B_i}$ are equal. For instance, for all $E,F\in\mA$, the generalized events $E\cup F$ and $E+F-E\cap F$ are equivalent. Also the event $\emptyset$ and the generalized event 0 are equivalent. It is easy to see that $\sim$ is an equivalence relation (symmetric, reflexive, transitive) and that if $E\sim F$ then $\Pr(E)=\Pr(F)$. Moreover, if $E\sim F$ and $E'\sim F'$ then $\la\cdot E \sim \la\cdot F$, $E+F\sim E'+F'$ and $E\cap F\sim E'\cap F'$. We say that two matrices of generalized events $E=(E_{i,j})_{i,j\in [n]}$ and $F=(F_{i,j})_{i,j\in [n]}$ are \emph{equivalent} if $E_{i,j}\sim F_{i,j}$ for all $i\in[n],j\in[n]$. The preceding properties immediately imply the following result.
\begin{lemma}\label{lem:equiv-Pdet}
If $E$ and $F$ are equivalent matrices of events, then they have the same $\Pr$-determinant. 
\end{lemma}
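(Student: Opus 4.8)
The plan is to prove Lemma~\ref{lem:equiv-Pdet} by unwinding the definitions and reducing everything to the properties of the equivalence relation $\sim$ that were just listed, so that no genuinely new computation is required. The statement to establish is that if $E=(E_{i,j})$ and $F=(F_{i,j})$ are equivalent matrices of generalized events (meaning $E_{i,j}\sim F_{i,j}$ entrywise), then $\Pdet(E)=\Pdet(F)$. Recall that by definition $\Pdet(M)=\Pr(\det(M))$, so it suffices to show two things: first, that $\det(E)\sim\det(F)$ as generalized events, and second, that $E\sim F$ implies $\Pr(E)=\Pr(F)$. The latter was already asserted in the text (''if $E\sim F$ then $\Pr(E)=\Pr(F)$''), so the real content is the first reduction.

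First I would invoke the three compatibility properties stated just before the lemma: if $E\sim F$ and $E'\sim F'$, then $\la\cdot E\sim\la\cdot F$, $E+F\sim E'+F'$, and $E\cap F\sim E'\cap F'$. These say that $\sim$ is a congruence with respect to the three operations (scalar multiplication, addition, intersection) out of which $\det$ is built. Since $\det(M)=\sum_{\pi\in\fS_n}\eps(\pi)\,M_{1,\pi(1)}\cap\cdots\cap M_{n,\pi(n)}$ is assembled from the entries purely by repeated intersection, scalar multiplication by $\eps(\pi)\in\{\pm1\}$, and summation, I would argue termwise: for each fixed permutation $\pi$, the entrywise equivalences $E_{i,\pi(i)}\sim F_{i,\pi(i)}$ combine under the intersection-compatibility property to give $E_{1,\pi(1)}\cap\cdots\cap E_{n,\pi(n)}\sim F_{1,\pi(1)}\cap\cdots\cap F_{n,\pi(n)}$. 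Then scalar-compatibility handles the factor $\eps(\pi)$, and summing over $\pi$ with addition-compatibility yields $\det(E)\sim\det(F)$.

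Having obtained $\det(E)\sim\det(F)$, I would apply the already-noted fact that equivalent generalized events have equal probability, concluding $\Pr(\det(E))=\Pr(\det(F))$, i.e.\ $\Pdet(E)=\Pdet(F)$, as desired. The only point requiring a word of care is the inductive extension of the two-factor intersection-compatibility to an $n$-fold intersection: one establishes $E_1\cap\cdots\cap E_n\sim F_1\cap\cdots\cap F_n$ by a trivial induction on $n$, using at each step that $E_1\cap\cdots\cap E_{j}\sim F_1\cap\cdots\cap F_j$ together with $E_{j+1}\sim F_{j+1}$ gives, by intersection-compatibility, $(E_1\cap\cdots\cap E_j)\cap E_{j+1}\sim(F_1\cap\cdots\cap F_j)\cap F_{j+1}$ (here one uses implicitly that $\cap$ is associative on $\CC[\mA]$, which is immediate from its distributive definition).

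I do not expect any real obstacle here; the lemma is essentially a formal bookkeeping statement, and the substance has been front-loaded into the list of congruence properties preceding it. The only thing to guard against is circularity or a gap in that preliminary list—specifically, one should make sure the pointwise characterization of $\sim$ (via the indicator functions $A_\om$ and $B_\om$) is what actually licenses those congruence properties, since it is from pointwise equality of the functions $\om\mapsto A_\om$ that both the ring-operation compatibilities and the equality of probabilities ultimately derive. Given that these have been taken as established, the proof is a two-line deduction.
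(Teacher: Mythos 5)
Your proof is correct and follows exactly the route the paper intends: the paper offers no written proof, stating only that the congruence properties of $\sim$ (under scalar multiplication, addition, and intersection) together with the fact that $E\sim F$ implies $\Pr(E)=\Pr(F)$ ``immediately imply'' the lemma, and your argument is precisely the careful unwinding of that immediate implication, including the harmless induction for the $n$-fold intersection.
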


\medskip


\section{Determinantal expressions for the probabilities $P_\ze(\pp,r)$.} \label{sec:determinantal-expressions}
We fix $r$, $k$, and $\pp$ as in Theorem~\ref{thm:main}. We define a probability space $(\Om,\mA,\Pr)$ in the following way:
\begin{compactitem}
\item $\Om$ is the set of pairs $(\SS,f)$, where $\SS$ is in $\mS_{\pp,r}$ and $f$ is a surjection from $[k-1]$ to $[r]$. 
\item $\mA$ is the power set $\mP(\Om)$,
\item $\Pr$ is the uniform distribution on $\Om$.
\end{compactitem}
We denote by $\Om_n$ the set of triples $(\SS,f,\pi)$, where $(\SS,f)$ is in $\Om$ and $\pi$ is a permutation of $[n]$. For a matrix of events $E=(E_{i,j})_{i,j\in[n]}$ we denote by $\Om(E)$ the set of triples $(\SS,f,\pi)\in\Om_n$ such that $(\SS,f)$ is in the intersection $\bigcap_{i=1}^{n} E_{i,\pi(i)}$. Observe that 
\begin{equation}\label{eq:expand-Pdet}
\Pdet(E)=\frac{1}{|\Om|}\sum_{(\SS,f,\pi)\in \Om(E)}\eps(\pi).
\end{equation}

We now express the probabilities $P_\ze(\pp,r)$ defined in Theorem~\ref{thm:main} as $\Pr$-determinants. By definition, for $\ze\in\{\al,\be,\ga\}$, $P_\ze(\pp,r)$ is the conditional probability, in the space $(\Om,\mA,\Pr)$, that the random digraph $G_\ze(\SS,f)$ is a tree given that $\SS=(S_1,\ldots,S_r)$ is in $\mM_{\pp,r}$ (equivalently, given that none of the subsets $S_1,\ldots,S_r$ is equal to $[k]$).
Since the random digraph $G_\ze(\SS,f)$ is never a tree unless $\SS$ is in $\mM_{\pp,r}$ (because $G_\ze(\SS,f)$ has loops if $\SS\notin\mM_{\pp,r}$) one gets 
\begin{eqnarray*}
P_{\ze}(\pp,r)~=~ \frac{\Pr\l(G_\ze(\SS,f)\textrm{ is a tree}\r)}{\Pr\l(\SS\in\mM_{\pp,r}\r)}
~=~\frac{|\mS_{\pp,r}|}{|\mM_{\pp,r}|}\sum_{T\in\mT_k}\Pr\l(G_\ze(\SS,f)=T\r),
\end{eqnarray*}
where  $\mT_k$ is the set of spanning tree of $K_k$ rooted at $k$.

We will now use our generalization of the matrix-tree theorem. For  $\ze$ in $\{\al,\be,\ga\}$, $i\in[k-1]$, and  $j\in[k]$, we define the event $E_{\ze,i,j}\subseteq \Om$ as the set of pairs $(\SS,f)$ in $\Om$ such that $\ze(i,S_{f(i)})=j$. In other words, $E_{\ze,i,j}$ is the event ``the arc $a_i$ of the digraph $G_\ze(\SS,f)$ is $(i,j)$''. By definition, for any tree $T$ in $\mT_k$, the event $G_\ze(\SS,f)=T$ is equal to $\bigcap_{(i,j)\in T}E_{\ze,i,j}$. Thus, 
$$P_\ze(\pp,r)=\frac{|\mS_{\pp,r}|}{|\mM_{\pp,r}|}\sum_{T\in\mT_k}\Pr{\Big(}\bigcap_{(i,j)\in T}E_{\ze,i,j}{\Big )}.$$
Hence by Proposition~\ref{prop:matrix-tree}, 
\begin{eqnarray*}
P_\ze(\pp,r)=\frac{|\mS_{\pp,r}|}{|\mM_{\pp,r}|}\Pdet(L_\ze),
\end{eqnarray*}
where $L_\ze=(L_{\ze,i,j})_{i,j\in[k-1]}$ is the reduced Laplacian matrix of $E_\ze=(E_{\ze,i,j})_{i\in[k-1],j\in[k]}$.

We will now define a matrix $L_\ze'$ equivalent to the reduced Laplacian $L_\ze$.
For $t\in[r]$, and $i,j$ in $[k]$ we define the event $I_{i,j}^{t}$  as follows: 
$$I_{i,j}^{t}=\{(\SS,f)\in\Om,~]i,j]\subseteq S_{t}\}.$$
Observe that $I_{i,i}^{t}:=\Om$ because $]i,i]=\emptyset$. We also define the event $J_{i,j}^{t}$ by $\ds J_{i,j}^{t}=I_{i,j}^{t}$ if $i\neq j$ and 
$\ds J_{i,i}^{t}:=\{(\SS,f),~S_{t}=[k]\}$.
For $i\in[k-1]$ and $j\in [k]$ we define the following generalized events
\begin{equation}\label{eq:L'}
\begin{array}{rcl}
\ds L'_{\al,i,j}&=&\ds I_{i,j}^{f(i)}-I_{i,j-1}^{f(i)},\\[.5em]
\ds L'_{\be,i,j}&=&\ds J_{i,j+1}^{f(i)}-J_{i,j}^{f(i)},\\[.5em]
\ds L'_{\ga,i,j}&=&\ds J_{i,j+1}^{f(i)}-J_{i,j}^{f(i)}-J_{i-1,j+1}^{f(i)}+J_{i-1,j}^{f(i)}.\\[.5em]
\end{array}
\end{equation}
Here and in the following, we consider the subscripts of the events $I^t_{i,j}$ and $J^t_{i,j}$ cyclically modulo $k$; for instance $J^t_{0,j}$ is understood as $J^t_{k,j}$.

It is easy to check that for $\ze$ in $\{\al,\be,\ga\}$ and for all $i\in[k-1],j\in[k]$ such that $i\neq j$, one has the equivalence of events $L'_{\ze,i,j}\sim L_{\ze,i,j}$. For instance, $L_{\al,i,j}=-E_{\al,i,j}$ where 
$$E_{\al,i,j}=\{i+1,..,j-1\in S_{f(i)}, \textrm{ and } j\notin  S_{f(i)} \}=I_{i,j-1}\setminus I_{i,j},$$
hence $\ds L_{\al,i,j}=-\,I_{i,j-1}\setminus I_{i,j}\sim I_{i,j}-I_{i,j-1}$.
Moreover, for all $i\in [k-1]$, 
$$L_{\ze,i,i}'=-\!\sum_{j\in[k-1]\setminus \{i\}}L'_{\ze,i,j}~\sim~ -\!\sum_{j\in[k-1]\setminus \{i\}}L_{\ze,i,j}= L_{\ze,i,i}.$$ 
Thus, by Lemma~\ref{lem:equiv-Pdet} the matrices $L_{\ze}'$ and $L_{\ze}$ have the same $\Pr$-determinant. 
Our findings so far are summarized in the following lemma. 
\begin{lemma}\label{lem:proba-as-determinant}
For all $\ze\in\{\al,\be,\ga\}$, the probability $P_\ze(\pp,r)$ defined in Theorem~\ref{thm:main} is 
$$P_\ze(\pp,r)=\frac{|\mS_{\pp,r}|}{|\mM_{\pp,r}|}\Pdet(L_\ze'),$$
where $L_{\ze}'=(L'_{\ze,i,j})_{i,j\in[k-1]}$ is the matrix of generalized events defined by~\eqref{eq:L'}.\\
\end{lemma}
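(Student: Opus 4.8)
\textbf{Proof plan for Lemma~\ref{lem:proba-as-determinant}.}

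The plan is to assemble the lemma from three ingredients that have already been prepared in the excerpt, so that the proof itself becomes a short bookkeeping argument. The starting point is the chain of equalities established just before the statement, namely that for each $\ze\in\{\al,\be,\ga\}$ one has
$$
P_\ze(\pp,r)=\frac{|\mS_{\pp,r}|}{|\mM_{\pp,r}|}\sum_{T\in\mT_k}\Pr\Big(\bigcap_{(i,j)\in T}E_{\ze,i,j}\Big)
=\frac{|\mS_{\pp,r}|}{|\mM_{\pp,r}|}\,\Pdet(L_\ze),
$$
where the second equality is exactly the specialization of Proposition~\ref{prop:matrix-tree} to the matrix $E_\ze=(E_{\ze,i,j})_{i\in[k-1],j\in[k]}$ of arc-events, with $L_\ze$ its reduced Laplacian. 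So the entire content of the lemma reduces to replacing $\Pdet(L_\ze)$ by $\Pdet(L_\ze')$, i.e.\ to proving that $L_\ze$ and $L_\ze'$ have the same $\Pr$-determinant.

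For that replacement I would invoke Lemma~\ref{lem:equiv-Pdet}: it suffices to show that $L_\ze$ and $L_\ze'$ are equivalent matrices of generalized events, entry by entry. I would split this into the off-diagonal entries and the diagonal entries. For the off-diagonal entries $i\neq j$, I need $L'_{\ze,i,j}\sim L_{\ze,i,j}$ for each of the three rules; the excerpt already carries out the case $\ze=\al$ in full, writing $E_{\al,i,j}=I^{f(i)}_{i,j-1}\setminus I^{f(i)}_{i,j}$ so that $L_{\al,i,j}=-E_{\al,i,j}\sim I^{f(i)}_{i,j}-I^{f(i)}_{i,j-1}=L'_{\al,i,j}$, where the equivalence uses that for nested events $B\subseteq A$ one has $A\setminus B\sim A-B$. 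I would give the two remaining cases analogously: for $\be$, the arc-event $E_{\be,i,j}$ is $\{\,]i,j]\subseteq S_{f(i)},\ j+1\notin S_{f(i)}\,\}=J^{f(i)}_{i,j}\setminus J^{f(i)}_{i,j+1}$, whence $L_{\be,i,j}=-E_{\be,i,j}\sim J^{f(i)}_{i,j+1}-J^{f(i)}_{i,j}=L'_{\be,i,j}$; and for $\ga$ the event $E_{\ga,i,j}$ decomposes (according to whether the ``$i\in S$'' branch of the rule fires) into an inclusion–exclusion of four $J$-events matching the four-term expression for $L'_{\ga,i,j}$. In each case the verification is a direct unwinding of the definitions of $\al,\be,\ga$ against the definitions of $I^t_{i,j}$ and $J^t_{i,j}$, using the cyclic-subscript convention; the use of $J$ rather than $I$ on the diagonal ($i=j$) is exactly what accommodates the $S=[k]$ clause in the definitions of $\be$ and $\ga$.

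For the diagonal entries I would not verify $L'_{\ze,i,i}\sim L_{\ze,i,i}$ directly but instead exploit the defining relation of a reduced Laplacian together with the off-diagonal equivalences. By definition $L_{\ze,i,i}=-\sum_{j\in[k-1]\setminus\{i\}}L_{\ze,i,j}$, and the entries $L'_{\ze,i,j}$ are constructed (this is the telescoping built into~\eqref{eq:L'}) so that $L'_{\ze,i,i}=-\sum_{j\in[k-1]\setminus\{i\}}L'_{\ze,i,j}$ as well. Since $\sim$ respects the operations $+$ and scalar multiplication, the off-diagonal equivalences propagate to the diagonal: $L'_{\ze,i,i}=-\sum_j L'_{\ze,i,j}\sim -\sum_j L_{\ze,i,j}=L_{\ze,i,i}$. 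With entrywise equivalence in hand, Lemma~\ref{lem:equiv-Pdet} yields $\Pdet(L_\ze')=\Pdet(L_\ze)$, and substituting into the displayed formula for $P_\ze(\pp,r)$ completes the proof.

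I expect the main obstacle to be purely in the $\ze=\ga$ off-diagonal verification: translating the three-branch definition of $\ga(i,S)$ (the cases $S=[k]$, $i\in S$, and the generic case) into the precise four-term combination $J^{f(i)}_{i,j+1}-J^{f(i)}_{i,j}-J^{f(i)}_{i-1,j+1}+J^{f(i)}_{i-1,j}$ requires care with the cyclic indices and with which boundary events are genuine $J$'s versus $I$'s. Everything else is routine application of the already-proved Proposition~\ref{prop:matrix-tree} and Lemma~\ref{lem:equiv-Pdet}.
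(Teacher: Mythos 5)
Your proposal is correct and is essentially the paper's own argument: reduce $P_\ze(\pp,r)$ to $\frac{|\mS_{\pp,r}|}{|\mM_{\pp,r}|}\,\Pdet(L_\ze)$ via Proposition~\ref{prop:matrix-tree}, verify the entrywise equivalence $L'_\ze\sim L_\ze$ (off-diagonal entries by unwinding the definitions, diagonal entries from the row-sum relations), and conclude with Lemma~\ref{lem:equiv-Pdet}. One caveat, which your write-up shares with the paper's own text: the row-sum identities hold only with $j$ ranging over $[k]\setminus\{i\}$, not $[k-1]\setminus\{i\}$, since the Laplacian diagonal $L_{\ze,i,i}=\sum_{j\in[k]\setminus\{i\}}E_{\ze,i,j}$ includes the column-$k$ term and the telescoping built into~\eqref{eq:L'} closes only over the full cyclic range (one has $\sum_{j\in[k]}L'_{\ze,i,j}=0$, whereas $\sum_{j\in[k-1]}L'_{\ze,i,j}$ is not equivalent to $0$).
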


Next we derive simpler determinantal expressions for the probabilities $P_\ze(\pp,r)$.
\begin{prop}\label{prop:second-det}
For all $\ze\in\{\al,\be,\ga\}$, the probability $P_\ze(\pp,r)$ defined in Theorem~\ref{thm:main} is 
$$P_\ze(\pp,r)=\frac{|\mS_{\pp,r}|}{|\mM_{\pp,r}|}\Pdet(M_\ze),$$
where $M_\ze=(M_{\ze,i,j})_{i,j\in[k]}$ is the matrix of generalized events defined by 
\begin{equation*}
\begin{array}{rcll}
\ds M_{\al,i,j}&=&\ds I_{i,j}^{f(i)} &\textrm{if } i\in[k-1], \textrm{ and } M_{\al,k,j}=\Om,\\[.5em]
\ds M_{\be,i,j}&=&\ds J_{i,j+1}^{f(i)}&\textrm{if } i\in[k-1], \textrm{ and } M_{\be,k,j}=\Om,\\[.5em]
\ds M_{\ga,i,j}&=&\ds J_{i,j+1}^{f(i)}-J_{i-1,j+1}^{f(i)}&\textrm{if } i\in[k-1], \textrm{ and } M_{\ga,k,j}=\Om.\\[.5em]
\end{array}
\end{equation*}
\end{prop}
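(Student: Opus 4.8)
The plan is to establish, for each $\ze\in\{\al,\be,\ga\}$, the exact identity $\det(M_\ze)=\det(L_\ze')$ in the commutative ring $R=(\CC[\mA],+,\cap)$ (whose multiplicative identity is $\Om$); since $\Pr$ is linear this immediately gives $\Pdet(M_\ze)=\Pdet(L_\ze')$, and the proposition then follows from Lemma~\ref{lem:proba-as-determinant}. Because $R$ is a commutative ring, the determinant defined in Section~\ref{sec:matrix-tree} obeys all the usual formal identities — multilinearity and the alternating property in the columns, invariance under adding a $\CC$-multiple of one column to another, and cofactor expansion along a row — so the argument is a bookkeeping exercise with these operations, carried out uniformly in the three cases.

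The single input needed is the telescoping relation
$$M_{\ze,i,j}-M_{\ze,i,j-1}=L'_{\ze,i,j}\qquad (i\in[k-1],\ j\in[k],\ \text{indices cyclic mod }k),$$
which one checks directly from \eqref{eq:L'} and the definition of $M_\ze$ (for instance, in the $\ga$ case both sides equal $(J_{i,j+1}^{f(i)}-J_{i,j}^{f(i)})-(J_{i-1,j+1}^{f(i)}-J_{i-1,j}^{f(i)})$). First I would introduce the $(k-1)\times k$ matrix $\widehat{L}_\ze=(L'_{\ze,i,j})_{i\in[k-1],\,j\in[k]}$ obtained from \eqref{eq:L'} by keeping all $k$ columns, so that $L_\ze'$ is exactly $\widehat{L}_\ze$ with its last column removed; I write $\widehat{L}_\ze^{(k)}:=L_\ze'$ and let $\widehat{L}_\ze^{(1)}$ be the $(k-1)\times(k-1)$ matrix $\widehat{L}_\ze$ with its first column removed. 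Starting from $M_\ze$, I perform the column operations $C_j\mapsto C_j-C_{j-1}$ for $j=k,k-1,\dots,2$ in this order. By the telescoping relation the entry in row $i\le k-1$ and column $j\ge 2$ becomes $L'_{\ze,i,j}$, while the bottom row $(\Om,\dots,\Om)$ becomes $(\Om,0,\dots,0)$. Expanding the resulting determinant along this bottom row leaves only the first column and yields $\det(M_\ze)=(-1)^{k+1}\,\Om\cap\det(\widehat{L}_\ze^{(1)})=(-1)^{k+1}\det(\widehat{L}_\ze^{(1)})$.

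It then remains to compare $\widehat{L}_\ze^{(1)}$ with $\widehat{L}_\ze^{(k)}=L_\ze'$. Writing $C_1,\dots,C_k$ for the columns of $\widehat{L}_\ze$, the rows of $\widehat{L}_\ze$ sum to zero — indeed $\sum_{j=1}^k L'_{\ze,i,j}=\sum_{j=1}^k(M_{\ze,i,j}-M_{\ze,i,j-1})=0$ in $R$ by cyclic telescoping — so $C_1+\cdots+C_k=0$ in $R^{k-1}$. In $\det(\widehat{L}_\ze^{(k)})=\det[C_1,\dots,C_{k-1}]$, adding $C_2,\dots,C_{k-1}$ to the first column turns it into $-C_k$; pulling out the sign and cyclically reordering the columns then gives $\det(\widehat{L}_\ze^{(k)})=(-1)^{k+1}\det[C_2,\dots,C_k]=(-1)^{k+1}\det(\widehat{L}_\ze^{(1)})$. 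Substituting into the previous line, the two factors $(-1)^{k+1}$ cancel and I obtain $\det(M_\ze)=\det(\widehat{L}_\ze^{(k)})=\det(L_\ze')$, as desired.

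The computation is routine, and the one place demanding care is the sign bookkeeping: the proposition asserts the clean identity with no stray sign, and this rests precisely on the cancellation of the cofactor-expansion sign $(-1)^{k+1}$ against the column-reordering sign $(-1)^{k+1}$ produced by the zero-row-sum relation. I would therefore fix the cyclic index conventions carefully so that ``delete column $1$'' versus ``delete column $k$'' is unambiguous, and verify the telescoping relation separately for $\al$, $\be$, and $\ga$, since it is that single relation which allows all three matrices to be treated by one and the same argument.
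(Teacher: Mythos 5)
Your proposal is correct, but it reaches the conclusion by a genuinely different route than the paper. The paper also starts from Lemma~\ref{lem:proba-as-determinant} and performs column operations (it replaces column $j$ of $L'_\ze$ by the sum of the first $j$ columns, the inverse of your differencing), but it then expands by multilinearity in the \emph{rows} into $2^{k-1}$ matrices $M^D$, $D\subseteq[k-1]$, kills every term with $|D|\ge 2$ by a sign-reversing involution $\pi\mapsto\pi\circ(a,b)$ on triples $(\SS,f,\pi)$ (the rows indexed by $D$ being constant), and finally reassembles the $k$ surviving terms $\Pdet(M^{\emptyset})-\sum_{a}\Pdet(M^{\{a\}})$ into $\Pdet(M_\ze)$ via explicit sign-changing bijections on triples --- in effect an inverse cofactor expansion along the row of $\Om$'s. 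You instead prove the exact ring identity $\det(M_\ze)=\det(L'_\ze)$ in $R=(\CC[\mA],+,\cap)$: column differencing turns $M_\ze$ into a matrix with last row $(\Om,0,\dots,0)$, cofactor expansion gives $(-1)^{k+1}\det(\widehat{L}_\ze^{(1)})$, and the cyclic zero-row-sum relation $C_1+\cdots+C_k=0$ converts one maximal minor of $\widehat{L}_\ze$ into the other with the compensating sign $(-1)^{k+1}$; your telescoping relation $M_{\ze,i,j}-M_{\ze,i,j-1}=L'_{\ze,i,j}$ moreover handles $\al,\be,\ga$ uniformly, where the paper does $\al$ in detail and declares the other two cases verbatim. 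All the formal determinant identities you invoke (multilinearity, the alternating property, invariance under column operations, cofactor expansion) are indeed valid here, since $\cap$ is commutative and associative on $\CC[\mA]$ and $\Om$ is the multiplicative identity, so the cancellations are exact identities of formal linear combinations and $\Pdet$ is preserved. What each approach buys: yours is shorter, purely algebraic (it would work over any commutative ring), and avoids both the $2^{k-1}$-term expansion and the case analysis; the paper's detour through involutions on triples is not wasted, however, because it is precisely the machinery that Section~\ref{sec:computing-determinant} reuses for the substantially harder evaluations of $\Pdet(M_\ze)$ themselves.
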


\noindent \textbf{Example.} Let us illustrate Proposition~\ref{prop:second-det} in the case $\ze=\al$ and $k=3$. In this case, 
$$
M_\al=
\left(\begin{array}{ccc}
I_{1,1}^{f(1)} & I_{1,2}^{f(1)} & I_{1,3}^{f(1)} \\
I_{2,1}^{f(2)} & I_{2,2}^{f(2)} & I_{2,3}^{f(2)} \\
\Om & \Om & \Om\\
\end{array}\right)=
\left(\begin{array}{ccc}
\Om & 2\in S_{f(1)} & 2,3\in S_{f(1)} \\
1,3\in S_{f(2)} & \Om & 3\in S_{f(2)} \\
\Om & \Om & \Om\\
\end{array}\right),
$$
hence by definition of the $\Pr$-determinant,
\begin{equation}\label{eq:expPropal} 
\begin{array}{rcl}
\ds \Pdet(M_\al)&\!\!=\!\!&\ds 
\Pr\l(\Om\r)-\Pr\l(3\in S_{f(2)}\r)+\Pr\l(2\in S_{f(1)}\cap 3\in S_{f(2)}\r)-\Pr\l(2,3\in S_{f(1)}\r)\\[.5em]
&&\ds -\Pr\l(2\in S_{f(1)}\cap 1,3\in S_{f(2)}\r)+\Pr\l(2,3\in S_{f(1)}\cap 1,3\in S_{f(2)}\r). 
\end{array}
\end{equation}
Proposition~\ref{prop:second-det} asserts that the probability $P_\al(\pp,r)$ that the graph $G_{\al}(\SS,f)$ is a tree is equal to $\frac{|\mS_{\pp,r}|}{|\mM_{\pp,r}|}\Pdet(M_\al)$. We leave as an exercise to prove that the right-hand side of~\eqref{eq:expPropal} is equal to $\frac{|\mM_{\pp,r}|}{|\mS_{\pp,r}|}\times (1-p_3/r)$ as predicted by Theorem~\ref{thm:main}.\\

The rest of this section is devoted to the proof of Proposition~\ref{prop:second-det}. 
We first treat in detail the case $\ze=\al$. 
Given Lemma~\ref{lem:proba-as-determinant} we only need to prove $\Pdet(L_\al')=\Pdet(M_{\al}),$
where $L'_\al=\left(I_{i,j}^{f(i)}-I_{i,j-1}^{f(i)}\right)_{i,j\in [k-1]}$. 
Since $\Pr$-determinants are alternating in the columns of matrices, we can replace the $j$th column of $L_\al'$ by the sum of its $j$ first columns without changing the $\Pr$-determinant. This gives,
$$\Pdet(L_\al')=\Pdet\left(I_{i,j}^{f(i)}-I_{i,k}^{f(i)}\right)_{i,j\in [k-1]}.$$
Next, by  linearity of the $\Pr$-determinant in the rows of the matrix, one gets 
$$\Pdet(L_\al')=\sum_{D\subseteq [k-1]}(-1)^{|D|}\Pdet\left(M^{D}\right),$$
where $M^{D}=\left(M^{D}_{i,j}\right)_{i,j\in [k-1]}$ with $M^{D}_{i,j}=I_{i,k}^{f(i)}$ if $i\in D$ and $M^{D}_{i,j}=I_{i,j}^{f(i)}$ otherwise. 
We now show that only $k$ of the subsets $D$ contribute to the above sum.\\

\begin{lemma}
If $D\subseteq[k-1]$ contains more than one element, then $\Pdet(M^{D})=0$.
\end{lemma}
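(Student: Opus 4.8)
The plan is to exploit the fact that every row of $M^D$ indexed by an element of $D$ is \emph{constant}, and then to run the standard sign-reversing involution proving that ``a determinant with two equal rows vanishes'', adapted to the ring $R=(\CC[\mA],+,\cap)$.

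First I would record the key structural observation: by the definition of $M^D$, for every $i\in D$ and every $j\in[k-1]$ one has $M^D_{i,j}=I^{f(i)}_{i,k}$, so the $i$-th row of $M^D$ does not depend on the column index $j$. Thus each row indexed by $D$ is a constant row, the constant being the single event $I^{f(i)}_{i,k}\in\mA$.

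Next, since $|D|\ge 2$, I would fix two distinct indices $a,b\in D$ and define the involution $\pi\mapsto\pi'=\pi\circ(a\,b)$ on $\fS_{k-1}$, where $(a\,b)$ is the transposition exchanging $a$ and $b$. This involution is fixed-point free (as $a\ne b$) and satisfies $\eps(\pi')=-\eps(\pi)$. I would then compare the two summands of $\det(M^D)=\sum_{\pi\in\fS_{k-1}}\eps(\pi)\bigcap_{i\in[k-1]}M^D_{i,\pi(i)}$ indexed by $\pi$ and $\pi'$. For $i\notin\{a,b\}$ one has $\pi'(i)=\pi(i)$, so those factors agree; for the two rows $a,b\in D$ the factors are $M^D_{a,\pi'(a)}=M^D_{a,\pi(b)}=I^{f(a)}_{a,k}$ and $M^D_{b,\pi'(b)}=M^D_{b,\pi(a)}=I^{f(b)}_{b,k}$, which coincide with the corresponding factors $M^D_{a,\pi(a)}=I^{f(a)}_{a,k}$ and $M^D_{b,\pi(b)}=I^{f(b)}_{b,k}$ of the $\pi$-term \emph{precisely because} rows $a$ and $b$ are constant. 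Using commutativity of $\cap$, the two intersections $\bigcap_i M^D_{i,\pi(i)}$ and $\bigcap_i M^D_{i,\pi'(i)}$ are therefore equal as events, while their signs are opposite. Hence the summands cancel in pairs and $\det(M^D)=0$ already in $R$, so that $\Pdet(M^D)=\Pr(0)=0$.

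There is essentially no hard step here; the only point worth flagging is that the classical cancellation argument must be carried out in the ring $R$ of generalized events rather than over a field. This causes no difficulty because $\cap$ is commutative and each entry of $M^D$ is an honest event (not a linear combination), so each product $\bigcap_i M^D_{i,\pi(i)}$ is a single element of $\mA$ and the pairing $\pi\leftrightarrow\pi'$ cancels the corresponding $\pm1$ coefficients verbatim. Note also that we use two \emph{constant} rows rather than two equal rows: the same transposition trick applies, since the value of a constant row is insensitive to which column $\pi$ selects in that row.
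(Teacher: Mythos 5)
Your proof is correct and uses essentially the same argument as the paper: both exploit the fact that the rows of $M^{D}$ indexed by $D$ are constant and cancel terms in pairs via the sign-reversing transposition $\pi\mapsto\pi\circ(a,b)$ for two fixed distinct $a,b\in D$. The only difference is cosmetic: you cancel directly in the ring $\CC[\mA]$, concluding $\det(M^{D})=0$ before applying $\Pr$, whereas the paper applies the same involution to the triples $(\SS,f,\pi)\in\Om(M^{D})$ in the expansion $\Pdet(M^{D})=\frac{1}{|\Om|}\sum_{(\SS,f,\pi)\in\Om(M^{D})}\eps(\pi)$.
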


\begin{proof} We assume that $D$ contains two distinct integers $a$ and $b$ and want to show that $\Pdet(M^{D})=0$. We will use the expression~\eqref{eq:expand-Pdet} of $\Pr$-determinant.
By definition, a triple $(\SS,f,\pi)\in\Om_{k-1}$ is in $\Om(M^{D})$ if and only if for all $i\in D$, $]i,k]\subseteq S_{f(i)}$ and for all $i\in [k]\setminus D$, $]i,\pi(i)]\subseteq S_{f(i)}$. Observe that the above conditions for $i=a$ and $i=b$, namely $]a,k]\subseteq \mS_{f(a)}$ and $]b,k]\subseteq \mS_{f(b)}$, do not depend on the permutation $\pi$. More generally, none of the above conditions is affected by changing the permutation $\pi$ by $\pi\circ (a,b)$, where $(a,b)$ is the transposition of the integers $a$ and $b$. Thus a triple $(\SS,f,\pi)$ is in $\Om(M^{D})$ if and only if $\phi(\SS,f,\pi):=(\SS,f,\pi\circ (a,b))$ is in $\Om(M^{D})$. Thus the mapping $\phi$ is an involution of $\Om(M^{D})$. Moreover, since the involution $\phi$ changes the sign of the permutation $\pi$, we get
$$\Pdet(M^{D})=\frac{1}{|\Om|}\sum_{(\SS,f,\pi)\in \Om(M^{D})}\eps(\pi)=0,$$
as claimed.
\end{proof}

So far we have shown that 
$$\Pdet(L_\al')=\Pdet(M^{\emptyset})-\sum_{a\in [k-1]}\Pdet(M^{\{a\}}).$$
Next, we observe that the set of triples $(\SS,f,\pi)$ in $\Om(M^{\emptyset})$ identifies with the set of triples $(\SS,f,\pi')$ in $\Om(M_\al)$ such that $\pi'(k)=k$. Indeed, the correspondence is simply obtained by replacing the permutation $\pi$ of $[k-1]$ by the permutation $\pi'$ of $[k]$ such that $\pi'(k)=k$, and $\pi'(i)=\pi(i)$ for all $i$ in $[k-1]$. Similarly, for all $a\in[k-1]$, there is a bijection between the set of triples $(\SS,f,\pi)$ in $\Om(M^{\{a\}})$ and the set of triples $(\SS,f,\pi')$ in $\Om(M_\al)$ such that $\pi'(a)=k$. Indeed, the bijection is simply obtained by replacing the permutation $\pi$ of $[k-1]$ by the permutation $\pi'$ of $[k]$ such that $\pi'(a)=k$, $\pi'(k)=\pi(a)$ and $\pi'(i)=\pi(i)$ for all $i\neq a$ in $[k-1]$. 
Observe that this bijection changes the sign of the permutation, hence 
\begin{eqnarray*}
\Pdet(L_\al')&=&\frac{1}{|\Om|}\sum_{(\SS,f,\pi)\in \Om(M^{\emptyset})}\eps(\pi)-\sum_{a\in [k-1]}\sum_{(\SS,f,\pi)\in \Om(M^{\{a\}})}\eps(\pi)\\
&=&\frac{1}{|\Om|}\sum_{(\SS,f,\pi')\in \Om(M_\al)}\eps(\pi)=\Pdet(M_\al).
\end{eqnarray*}
This completes the proof of Proposition~\ref{prop:second-det} in the case $\ze=\al$.

The proof of Proposition~\ref{prop:second-det} in the case $\ze=\be$ (resp. $\ze=\ga$), is exactly the same as the proof given above for $\ze=\al$, except that the events $I_{i,j}^t$ are replaced by $J_{i,j+1}^t$ (resp. $J_{i,j+1}^t-J_{i-1,j+1}^t$).

\medskip

\section{Computing $\Pr$-determinants using sign reversing involutions}\label{sec:computing-determinant}
In this section we complete the proof of Theorem~\ref{thm:main} by computing the $\Pr$-determinant of the matrices $M_\ze$ for $\ze\in\{\al,\be,\ga\}$.

\subsection{Computing the $\Pr$-determinant of the matrix $M_\al$.}\label{sec:detMal}
In this section we compute the $\Pr$-determinant of the matrix $M_\al$. 
\begin{prop}\label{prop:detMal}
The $\Pr$-determinant of the matrix $M_\al$ defined in Proposition~\ref{prop:second-det} is 
$$\Pdet(M_\al)=\frac{|\mM_{\pp,r}|}{|\mS_{\pp,r}|}\times (1-p_k/r).$$
\end{prop}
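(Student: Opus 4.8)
The goal is to compute
$$\Pdet(M_\al)=\sum_{\pi\in\fS_k}\eps(\pi)\,\Pr\Big(\bigcap_{i=1}^{k}M_{\al,i,\pi(i)}\Big),$$
where $M_{\al,i,j}=I_{i,j}^{f(i)}$ for $i\in[k-1]$ and $M_{\al,k,j}=\Om$. The plan is to use expression~\eqref{eq:expand-Pdet}, writing the $\Pr$-determinant as $\frac{1}{|\Om|}\sum_{(\SS,f,\pi)\in\Om(M_\al)}\eps(\pi)$, and then build a sign-reversing involution on the set $\Om(M_\al)$ so that nearly all terms cancel in pairs, leaving only a controlled residual set whose signed contribution gives the claimed answer $\frac{|\mM_{\pp,r}|}{|\mS_{\pp,r}|}(1-p_k/r)$.

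First I would unpack the combinatorial meaning of membership in $\Om(M_\al)$. A triple $(\SS,f,\pi)\in\Om_k$ lies in $\Om(M_\al)$ iff $(\SS,f)\in\bigcap_{i=1}^k I_{i,\pi(i)}^{f(i)}$, i.e.\ $]i,\pi(i)]\subseteq S_{f(i)}$ for every $i\in[k-1]$ (the row $i=k$ gives $\Om$, so it imposes no condition, and in particular $\pi(k)$ is unconstrained). Thinking of $\pi$ as a digraph with arcs $(i,\pi(i))$, each vertex $i\in[k-1]$ carries the constraint that the cyclic interval $]i,\pi(i)]$ is contained in $S_{f(i)}$. The idea is that most permutations $\pi$ can be locally modified at a single vertex without disturbing these interval-containment constraints, and such a modification flips $\eps(\pi)$.

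The heart of the argument, and the step I expect to be the main obstacle, is constructing the right involution on $\Om(M_\al)$. The natural candidate is to locate a canonical vertex $i\in[k-1]$ — say the smallest index at which the permutation structure admits a local toggle — and swap $\pi$ with $\pi$ composed with a transposition (or a small cyclic adjustment) that preserves every constraint $]i,\pi(i)]\subseteq S_{f(i)}$ while changing the sign. Because $]i,\pi(i)]\subseteq S_{f(i)}$ is a ``downward-closed in $j$'' condition once $f$ and $\SS$ are fixed (if $]i,j]\subseteq S_{f(i)}$ and $j'$ is ``between'' $i$ and $j$ cyclically then $]i,j']\subseteq S_{f(i)}$), there is slack at most vertices allowing $\pi(i)$ to be moved, and I would use this slack to pair up triples with opposite signs. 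The delicate point is to identify precisely the fixed points of the involution — the triples that admit \emph{no} such toggle — and to check that these are exactly the ones encoding spanning trees rooted at $k$ of a specific shape.

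Finally I would evaluate the residual signed sum. By Proposition~\ref{prop:matrix-tree} the uncancelled contribution equals $\sum_{T\in\mT_k}\Pr\big(\bigcap_{(i,j)\in T}E_{\al,i,j}\big)$, i.e.\ $\frac{|\mM_{\pp,r}|}{|\mS_{\pp,r}|}P_\al(\pp,r)$, and by part~(a) of Theorem~\ref{thm:main} this is the probability that $k\notin S_1$. Concretely, I expect the surviving fixed points to correspond to the single condition $k\notin S_{f(i)}$ for the relevant row, so that the signed total collapses to $\Pr(k\notin S_1)=1-p_k/r$ scaled by $\frac{|\mM_{\pp,r}|}{|\mS_{\pp,r}|}$. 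Here I would use Remark~3-style symmetry (the distribution of $\SS$ depends only on occurrence counts) together with the fact that $j=k$ is contained in exactly $p_k$ of the $r$ subsets to get the count $p_k/r$ cleanly. Closing the computation amounts to matching this residual against the $\frac{|\mM_{\pp,r}|}{|\mS_{\pp,r}|}(1-p_k/r)$ target, which is the stated formula.
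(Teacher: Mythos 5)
There is a genuine gap, and it comes in two pieces. The most serious one is that your final step is circular: you evaluate the residual (uncancelled) contribution by identifying it, via Proposition~\ref{prop:matrix-tree}, with $\sum_{T\in\mT_k}\Pr\bigl(\bigcap_{(i,j)\in T}E_{\al,i,j}\bigr)=\frac{|\mM_{\pp,r}|}{|\mS_{\pp,r}|}P_\al(\pp,r)$ and then invoking part~(a) of Theorem~\ref{thm:main} to conclude this equals $\frac{|\mM_{\pp,r}|}{|\mS_{\pp,r}|}(1-p_k/r)$. But Proposition~\ref{prop:detMal} (together with Proposition~\ref{prop:second-det}) \emph{is} the proof of Theorem~\ref{thm:main}(a); you cannot use that theorem here. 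Moreover, translating the residual back into the tree sum merely undoes Proposition~\ref{prop:second-det} and returns you to the original problem, so even setting circularity aside, the plan never actually computes anything: the entire content of the proposition is to evaluate this quantity by independent means.

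The second gap is that the sign-reversing involution, which you yourself call the heart of the argument, is never constructed, and the candidate you sketch (a local toggle of $\pi$ at a canonical vertex, using the slack in the interval conditions) is too weak. The paper's proof shows why: it does not cancel within $\Om(M_\al)$ directly, but first uses multilinearity in the rows to write $\Pdet(M_\al)=\Pdet(M^{(k)})+\sum_{a=2}^{k-1}\Pdet(N^{(a)})$, where $M^{(k)}$ is nearly diagonal so that only two permutations (the identity and the transposition $(k-1,k)$) contribute --- this explicit two-term computation in Lemma~\ref{lem:Mkal} is where $1-p_k/r$ actually comes from, as the probability of the event ``$\SS\in\mM_{\pp,r}$ and $k\notin S_{f(k-1)}$''. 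The vanishing of each $\Pdet(N^{(a)})$ then requires involutions that in general modify not only $\pi$ but also $f$ and $\SS$ (on the set $\mX$ one transfers the element $a$ between $S_{f(a-1)}$ and $S_{f(a)}$ while composing both $f$ and $\pi$ with $(a-1,a)$), together with a four-way partition $\mW,\mX,\mY,\mZ$ and further case analysis inside $\mY$ and $\mZ$. The paper even remarks that there exist matrices of generalized events whose critical rows agree with those of $N^{a,D}$ yet have non-zero $\Pr$-determinant, so no purely local toggle of $\pi$ alone can carry the cancellation; your plan would stall exactly at the point you flag as delicate.
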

Note that Proposition~\ref{prop:detMal} together with Proposition~\ref{prop:second-det} prove the case $(a)$ of Theorem~\ref{thm:main}. In order to prove Proposition~\ref{prop:detMal}, we first use the multilinearity of $\Pr$-determinants with respect to rows. 
For $a\in[k]$, we denote by $M^{(a)}=(M^{(a)}_{i,j})_{i,j\in[k]}$ the matrix of generalized events represented in Figure \ref{fig:matrix-M}, that is, $M^{(a)}_{i,j}=M_{\al,i,j}$ if $i\geq a-1$, $M^{(a)}_{i,j}=0$ if $i<a-1$ and $j\neq i$, and $M^{(a)}_{i,i}=I_{i,i}^{f(i)}-J_{i,i}^{f(i)}$ if $i<a-1$. Note that $M^{(1)}=M^{(2)}=M_\al$. 
For $a\in\{2,3,\ldots, k-1\}$, we also denote $N^{(a)}=(N^{(a)}_{i,j})_{i,j\in[k]}$ the matrix of events which is the same as  $M^{(a)}$ except the entry in position $(a-1,a-1)$ is $J_{a-1,a-1}^{f(a-1)}$ (see Figure \ref{fig:matrix-M}).
\begin{figure}
$$
{\footnotesize
\left(
\begin{array}{cccccccc}
I^{f(1)}_{1,1} -J^{f(1)}_{1,1} & 0 & \cdots&\cdots & \cdots &\cdots & \cdots & 0\\
0 &I^{f(2)}_{2,2} -J^{f(2)}_{2,2} & 0 & \cdots& \cdots & \cdots&\cdots& 0 \\
\vdots & &\ddots & & & & & \vdots \\ 
0 & \cdots & 0& I^{f(a-2)}_{a-2,a-2} -J^{f(a-2)}_{a-2,a-2} & 0 & \cdots &\cdots & 0\\
I_{a-1,1}^{f(a-1)} &I_{a-1,2}^{f(a-1)} &\cdots & I_{a-1,a-2}^{f(a-1)} &\textcolor{red}{\bX} & I_{a-1,a}^{f(a-1)} & \cdots & I_{a-1,k}^{f(a-1)} \\
I_{a,1}^{f(a)} & I_{a,2}^{f(a)} &\cdots & I_{a,a-2}^{f(a)} & I_{a,a-1}^{f(a)} & I_{a,a}^{f(a)} & \cdots & I_{a,k}^{f(a)} \\
\vdots &\vdots && \vdots & \vdots & \vdots && \vdots \\ 
I_{k-1,1}^{f(k-1)} & I_{k-1,2}^{f(k-1)} & \cdots & I_{k-1,a-2}^{f(k-1)} & I_{k-1,a-1}^{f(k-1)} & I_{k-1,a}^{f(k-1)} & \cdots & I_{k-1,k}^{f(k-1)} \\ 
\Om &\Om &\cdots &\Om &\Om &\Om &\cdots &\Om \\
\end{array}
\right)
}
$$
\caption{The matrix $M^{(a)}$ (resp. $N^{(a)}$) is the matrix represented above with the  entry $\bX$ in position $(a-1,a-1)$ equal to $\bX=I_{a-1,a-1}^{f(a-1)}$ (resp. $\bX=J_{a-1,a-1}^{f(a-1)}$).} \label{fig:matrix-M}
\end{figure}

By linearity of the $\Pr$-determinant with respect to matrix rows one gets
$$\Pdet(M^{(a)})=\Pdet(N^{(a)})+\Pdet(M^{(a+1)}),$$
 for all $a\in\{2,3,\ldots, k-1\}$. Hence
\begin{equation}\label{eq:decompositionMal}
\Pdet(M_\al)=\Pdet(M^{(2)})=\Pdet(M^{(k)})+\sum_{a=2}^{k-1}\Pdet(N^{(a)}).
\end{equation}
We will now show that $\ds \Pdet(M^{(k)})=\frac{|\mM_{\pp,r}|}{|\mS_{\pp,r}|}\times (1-p_k/r)$, and $\Pdet(N^{(a)})=0$ for all $a\in\{2,3,\ldots, k-1\}$, thereby proving Proposition~\ref{prop:detMal}.

\begin{lemma}\label{lem:Mkal}
The $\Pr$-determinant of the matrix $M^{(k)}$ 
is
$\ds \Pdet(M^{(k)})=\frac{|\mM_{\pp,r}|}{|\mS_{\pp,r}|}\times (1-p_k/r).$
\end{lemma}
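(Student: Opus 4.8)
The plan is to expand $\Pdet(M^{(k)})$ directly from the near-diagonal shape of $M^{(k)}$ shown in Figure~\ref{fig:matrix-M}, and then to evaluate the resulting probability by a symmetry argument. First I would observe that rows $1,\ldots,k-2$ of $M^{(k)}$ are supported on their diagonal entries, so in the defining sum $\det(M^{(k)})=\sum_{\pi\in\fS_{k}}\eps(\pi)\bigcap_{i} M^{(k)}_{i,\pi(i)}$ every permutation with a nonzero contribution must fix $1,\ldots,k-2$; only the identity and the transposition $(k-1,k)$ remain, acting on the last two rows. Using $I_{i,i}^{f(i)}=\Om$ (since $]i,i]=\emptyset$) and $M_{\al,k,j}=\Om$, the identity contributes $\bigcap_{i=1}^{k-2}(\Om-J^{f(i)}_{i,i})$ with sign $+$, and $(k-1,k)$ contributes $\bigcap_{i=1}^{k-2}(\Om-J^{f(i)}_{i,i})\cap I^{f(k-1)}_{k-1,k}$ with sign $-$, so that
$$\det(M^{(k)})=\bigcap_{i=1}^{k-2}\big(\Om-J^{f(i)}_{i,i}\big)\cap\big(\Om-I^{f(k-1)}_{k-1,k}\big).$$

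Next I would pass to equivalence classes and invoke Lemma~\ref{lem:equiv-Pdet}. Since $J^{f(i)}_{i,i}=\{S_{f(i)}=[k]\}$ and $I^{f(k-1)}_{k-1,k}=\{k\in S_{f(k-1)}\}$, the generalized event above is equivalent to the genuine event
$$\bigcap_{i=1}^{k-2}\{S_{f(i)}\neq[k]\}\cap\{k\notin S_{f(k-1)}\}.$$
Because $\{k\notin S_{f(k-1)}\}\subseteq\{S_{f(k-1)}\neq[k]\}$ and $f$ is \emph{surjective}, the conjunction of the conditions $S_{f(i)}\neq[k]$ over $i\in[k-1]$ is exactly $\SS\in\mM_{\pp,r}$; hence the event equals $\{\SS\in\mM_{\pp,r}\}\cap\{k\notin S_{f(k-1)}\}$ and
$$\Pdet(M^{(k)})=\Pr\big(\SS\in\mM_{\pp,r},\ k\notin S_{f(k-1)}\big).$$

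Finally I would evaluate this probability using that, on $\Om$, the tuple $\SS$ and the surjection $f$ are independent. Conditioning on $f$ and writing $t=f(k-1)$, independence gives the inner probability $\Pr(\SS\in\mM_{\pp,r},\,k\notin S_t)$, which by exchangeability of the coordinates of the uniform tuple $\SS$ does not depend on $t\in[r]$; averaging over $f$ therefore just returns its common value, namely $\Pr(\SS\in\mM_{\pp,r},\,k\notin S_1)$. It remains to show this equals $\frac{|\mM_{\pp,r}|}{|\mS_{\pp,r}|}(1-p_k/r)$, equivalently that $\Pr(k\notin S_1\mid \SS\in\mM_{\pp,r})=1-p_k/r$. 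This follows because $k$ belongs to exactly $p_k$ of the $r$ subsets, so $\sum_{t=1}^{r}\mathbf{1}_{k\in S_t}=p_k$ identically; taking the conditional expectation given $\SS\in\mM_{\pp,r}$ and using exchangeability of the $r$ coordinates under this symmetric conditioning yields $\Pr(k\in S_1\mid\SS\in\mM_{\pp,r})=p_k/r$.

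I expect the last step to be the main obstacle: the determinant expansion and the reduction to a genuine event are essentially bookkeeping, whereas the evaluation $\Pr(k\notin S_1\mid\SS\in\mM_{\pp,r})=1-p_k/r$ is where the value $1-p_k/r$ is actually produced. The delicate point there is to verify that conditioning on $\mM_{\pp,r}$ does not destroy the exchangeability of the coordinates $S_1,\ldots,S_r$; this holds precisely because $\mM_{\pp,r}$ is invariant under permuting the $S_t$, so the uniform measure on it is exchangeable and the expectation identity applies coordinatewise.
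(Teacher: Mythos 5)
Your proof is correct and takes essentially the same route as the paper's: expand the near-diagonal determinant (only the identity and the transposition $(k-1,k)$ contribute), reduce the resulting generalized event to $\{\SS\in\mM_{\pp,r}\}\cap\{k\notin S_{f(k-1)}\}$ using surjectivity of $f$ (your explicit remark that $k\notin S_{f(k-1)}$ already forces $S_{f(k-1)}\neq[k]$ is a point the paper glosses over), and then evaluate by symmetry. The only, immaterial, difference is the final bookkeeping: the paper fixes $\SS\in\mM_{\pp,r}$ and uses that $f(k-1)$ is uniform on $[r]$, so that ``exactly $p_k$ of the $r$ subsets contain $k$'' yields $1-p_k/r$ directly, whereas you fix $f$ and invoke exchangeability of the coordinates of $\SS$ under the permutation-invariant conditioning on $\mM_{\pp,r}$ --- the same counting argument with the roles of the two symmetries interchanged.
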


\begin{proof}
Since all the entries in the last row of  $M^{(k)}$ are equal to  $\Om$, one gets 
$$\Pdet(M^{(k)})=\sum_{\pi\in\fS_k}\eps(\pi)\,\Pr\left(\bigcap_{i=1}^{k-1}M^{(k)}_{i,\pi(i)}\right).$$
Moreover, 
the only permutations $\pi$ contributing to the sum are the identity $\pi=\Id_k$ and the transposition $\pi=(k-1,k)$, which gives
\begin{eqnarray*}
\Pdet(M^{(k)})&=&\Pr\left(\bigcap_{i=1}^{k-2}\left(I_{i,i}^{f(i)}-J_{i,i}^{f(i)}\right)\cap I_{k-1,k-1}^{f(k-1)}\right)-\Pr\left(\bigcap_{i=1}^{k-2}\left(I_{i,i}^{f(i)}-J_{i,i}^{f(i)}\right)\cap I_{k-1,k}^{f(k-1)}\right)\nonumber\\
&=& \Pr\left(\bigcap_{i=1}^{k-2}\left(I_{i,i}^{f(i)}-J_{i,i}^{f(i)}\right)\cap \left(I_{k-1,k-1}^{f(k-1)}-I_{k-1,k}^{f(k-1)}\right)\right). 
\end{eqnarray*}
By definition, $I_{i,i}^{f(i)}=\Om$, and $J_{i,i}^{f(i)}$ is the event ``$S_{f(i)}=[k]$''. Hence the generalized event $\left(I_{i,i}^{f(i)}-J_{i,i}^{f(i)}\right)$ is equivalent to the event ``$S_{f(i)}\neq [k]$''. Moreover, $I_{k-1,k}^{f(k-1)}$ is the event ``$k\in S_{f(k-1)}$''. Hence the generalized event $\left(I_{k-1,k-1}^{f(k-1)}-I_{k-1,k}^{f(k-1)}\right)$ is equivalent to the event ``$k\notin S_{f(k-1)}$''. Thus $\Pdet(M^{(k)})$ is equal to the probability of the event ``for all $i\in[k-1]$, the subset $S_{f(i)}$ is a proper subset of $[k]$ and $k\notin S_{f(k-1)}$''. And since $f$ is a surjection from $[k-1]$ to $[r]$, this event is ``the subsets $S_1,\ldots,S_r$ are proper subsets of $[k]$  and $k\notin S_{f(k-1)}$''. Thus 
$$\Pdet(M^{(k)})=\Pr(\SS \in \mM_{\pp,r})\times \Pr(k\notin S_{f(k-1)} \mid \SS\in \mM_{\pp,r}).$$
Lastly, the conditional probability $\Pr(k\notin S_{f(k-1)} \mid \SS\in \mM_{\pp,r})$ is equal to $1-p_k/r$ since for any tuple $\SS=(S_1\ldots,S_r)\in\mM_{\pp,r}$ there are exactly $p_k$ of the $r$ subsets $S_1,\ldots,S_r$ containing the integer $k$. 
\end{proof}

It remains to prove that $\Pdet(N^{(a)})=0$ for all $a\in \{2,3,\ldots, k-1\}$.
For $D\subseteq [a-2]$, we denote by $N^{a,D}=(N^{a,D}_{i,j})_{i,j \in [k]}$ the matrix of generalized events defined by $N^{a,D}_{i,j}=N^{(a)}_{i,j}$ if $i\geq a-1$ or $j\neq i$, $N^{a,D}_{i,i}=J_{i,i}^{f(i)}$ if $i\in D$, and  $N^{a,D}_{i,i}=I_{i,i}^{f(i)}$ if $i\in [a-2]\setminus D$. By multilinearity of the $\Pr$-determinant in the rows of the matrix, one gets 
\begin{equation}\label{eq:decomposition-Na}
\Pdet(N^{(a)})=\sum_{D\subseteq [a-2]}(-1)^{|D|}\Pdet(N^{a,D}).
\end{equation}
It now suffices to prove the following lemma:

\begin{lemma}\label{lem:pdet=0}
For all $a\in \{2,3,\ldots, k-1\}$, and for all  $D\subseteq [a-2]$,  $\Pdet(N^{a,D})=0$.
\end{lemma}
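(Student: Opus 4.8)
The plan is to prove the vanishing by a sign-reversing involution, starting from the expansion~\eqref{eq:expand-Pdet}, which gives $\Pdet(N^{a,D})=\frac{1}{|\Om|}\sum_{(\SS,f,\pi)\in\Om(N^{a,D})}\eps(\pi)$. First I would record the constraints forced by the first $a-2$ rows: since for $i<a-1$ the row $i$ of $N^{a,D}$ has all off-diagonal entries equal to $0$, any contributing triple must have $\pi(i)=i$ for all $i\in[a-2]$, and these rows then contribute only the $\pi$-independent conditions ``$S_{f(i)}=[k]$'' for $i\in D$ (nothing for $i\in[a-2]\setminus D$). Hence $\pi$ restricts to a permutation $\sigma$ of $\{a-1,\dots,k\}$ with $\eps(\pi)=\eps(\sigma)$, and the contribution of the rows in $[a-2]$ factors off. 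It then suffices to build a sign-reversing involution on the triples $(\SS,f,\sigma)$ in which all the events $N^{a,D}_{i,\sigma(i)}$ hold. Equivalently, for fixed $(\SS,f)$ these triples are exactly the nonzero terms of the ordinary determinant of the $0/1$ matrix $A$ indexed by $\{a-1,\dots,k\}$ recording whether each entry event holds: $A_{i,j}=1$ iff $]i,j]\subseteq S_{f(i)}$ (for $i>a-1$, or for $i=a-1$ with $j\ne a-1$), while $A_{a-1,a-1}=1$ iff $S_{f(a-1)}=[k]$, and $A_{k,j}=1$ for all $j$; so $\Pdet(N^{a,D})$ is $\frac1{|\Om|}\sum_{(\SS,f)}(\text{$D$-weight})\det(A)$.

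The easy part is the case $S_{f(a-1)}=[k]$. Then $J^{f(a-1)}_{a-1,a-1}\subseteq I^{f(a-1)}_{a-1,j}$ for every $j$ and both hold, so row $a-1$ of $A$ is all ones and equals the all-ones row $k$, whence $\det(A)=0$. Equivalently, for such $(\SS,f)$ the map $\pi\mapsto (\pi(a-1)\ \pi(k))\circ\pi$, which exchanges the columns hit by rows $a-1$ and $k$, is a fixed-point-free sign-reversing involution of $\Om(N^{a,D})$; these configurations contribute $0$.

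The crux is the case $S_{f(a-1)}\ne[k]$, where $A_{a-1,a-1}=0$ forces $\sigma(a-1)\ne a-1$. Here no involution fixing $(\SS,f)$ can work, since for a single $(\SS,f)$ the determinant of $A$ is often $\pm1$ (a small check with $k=3$, $S_{f(1)}=\{1,2\}$, $S_{f(2)}=\{3\}$ gives exactly one surviving $\sigma$); the involution must therefore move $\SS$ as well. The plan is to compose $\sigma$ with the transposition exchanging the columns assigned to row $a-1$ and to a second distinguished row — reversing $\eps(\sigma)$ — while simultaneously swapping the memberships of two distinguished integers (one of them $a-1$, the other dictated by the first ``gap'' $\al(a-1,S_{f(a-1)})$ of $S_{f(a-1)}$) between the two subsets indexed by the exchanged rows. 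This is engineered so that each arc condition $]i,\sigma(i)]\subseteq S_{f(i)}$ is preserved and the occurrence vector $\pp$ is unchanged, so the image again lies in $\Om(N^{a,D})$.

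The hard part will be making the choice of the second row and integer canonical enough that the resulting map is a genuine fixed-point-free involution. In particular one must treat the degenerate situation $\al(a-1,S_{f(a-1)})=a-1$, i.e. $S_{f(a-1)}=[k]\setminus\{a-1\}$, so that the move keeps $S_{f(a-1)}$ a proper subset and the cases $S_{f(a-1)}=[k]$ and $S_{f(a-1)}\ne[k]$ never leak into one another; and one must check that when $f$ is not injective, so that several rows share the same subset $S_t$, these collisions do not break the swap. Once this involution is verified to be self-inverse and to reverse $\eps(\sigma)$, the signed sum over all configurations with $S_{f(a-1)}\ne[k]$ cancels in pairs, and combined with the previous paragraph this yields $\Pdet(N^{a,D})=0$.
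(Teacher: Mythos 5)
Your reduction and your easy case are fine: the rows $i\in[a-2]$ force $\pi(i)=i$ and contribute only the $\pi$-independent constraints ``$S_{f(i)}=[k]$'' for $i\in D$, and when $S_{f(a-1)}=[k]$ the map $\pi\mapsto\pi\circ(a-1,k)$ is indeed a fixed-point-free sign-reversing involution, since rows $a-1$ and $k$ then impose no constraint. Your observation that in the complementary case the involution must move $\SS$ is also correct, and your $k=3$ example makes the point well. But that complementary case is the entire substance of Lemma~\ref{lem:pdet=0}, and what you give there is a plan whose stated specifics are wrong and whose hard verifications you explicitly defer. The two rows that are genuinely comparable are rows $a-1$ and $a$: their conditions read $\{a\}\cup\,]a,j]\subseteq S_{f(a-1)}$ and $]a,j]\subseteq S_{f(a)}$, differing by exactly the single element $a$. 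The involution that works (the paper's Lemma~\ref{lem:X0}) therefore composes \emph{both} $\pi$ and $f$ with the transposition $(a-1,a)$ and transfers the element $a$ between $S_{f(a-1)}$ and $S_{f(a)}$. Your two distinguished integers, $a-1$ and the gap $\al(a-1,S_{f(a-1)})$, cannot play this role: $a-1$ never lies in any set $]a-1,j]$, so exchanging its membership does not interact with row $a-1$'s conditions at all, and the gap element is by construction outside $S_{f(a-1)}$, whereas the element whose membership actually distinguishes the two rows, namely $a$, lies inside it.

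The deeper gap is the collision analysis, which you flag (``one must check that when $f$ is not injective \dots these collisions do not break the swap'') but do not carry out; it is where most of the paper's proof lives. The transfer of $a$ fails exactly when some other row $b$ with $f(b)=f(a-1)$ or $f(b)=f(a)$ already forces $a$ into the corresponding subset, i.e.\ $a\in K_{b,\pi(b)}$ --- and this includes the case of an index $i\in D$ with $f(i)=f(a)$, since then $K_{i,i}=[k]$. The paper deals with this by partitioning $\Om(N^{a,D})$ into $\mW$ (where $f(a-1)=f(a)$), $\mX$, $\mY$, $\mZ$ according to whether $a$ lies in $H_{f,\pi,a-1}$ or $H_{f,\pi,a}$, using the $\SS$-moving involution only on $\mX$, and constructing on $\mY$ and $\mZ$ entirely different involutions after a further three-way split: $\pi\circ(a,k)$, $\pi\circ(b,k)$, or $\pi\circ(a,b)$, where $b$ is the greatest index with $f(b)=f(a)$ and $a\in K_{b,\pi(b)}$; proving that each of these maps preserves its class requires the positional inequalities of Claims~1--3 in Lemma~\ref{lem:Y0}. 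Note also that this partition is precisely what protects your factored-off $D$-constraints: the condition $a\notin H_{f,\pi,a}$ automatically excludes any $i\in D$ with $f(i)=f(a)$, a compatibility your plan never addresses. As written, your argument proves the lemma only when $S_{f(a-1)}=[k]$; even in the collision-free case $r=k-1$ (where $\mX=\Om(N^{a,D})$ and a single involution suffices) your transfer rule would have to be replaced by the paper's before it works.
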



\noindent \textbf{Intuition for the upcoming proof of Lemma \ref{lem:pdet=0}.} 
We aim at proving $\Pdet(N^{a,D})=0$ using~\eqref{eq:expand-Pdet}. For this, it suffices to find an involution on the set of triples $(\SS,f,\pi)$ in $\Om(N^{a,D})$ changing the sign of the permutation $\pi$. Now, the hope for finding such an involution is based on the observation that the events appearing in the $(a-1)$th and $a$th rows of the matrix $N^{a,D}$ are very similar: for all $j$ in $[k]$ the events $N^{a,D}_{a-1,j}$ and $N^{a,D}_{a,j}$ are respectively ``$\{a\}\cup \, ]a,j]\subseteq S_{f(a-1)}$'' and ``$]a,j]\subseteq S_{f(a)}$''. This implies that changing the permutation $\pi$ into $\pi'=\pi\circ (a-1,a)$ and the surjection $f$ into $f'=f\circ (a-1,a)$ (where $(a-1,a)$ is the transposition of the integers $a-1$ and $a$) will only change the requirement ``$a\in S_{f(a-1)}$'' into ``$a\in S_{f(a)}$''. Now, this can be achieved by also transferring the element $a$ from $S_{f(a-1)}$ to $S_{f(a)}$; and this is what is done in the proof of Lemma~\ref{lem:X0} below. However, this trick does not work for all triples $(\SS,f,\pi)\in\Om(N^{a,D})$ because the requirement ``$a\in S_{f(a-1)}$'' might still hold for $(\SS,f',\pi')$. Indeed, this happens if there is $b\in[k-1]$ such that $f(b)=f(a-1)$ and $a\in\,]b,\pi(b)]$.
In those cases, one would think about changing $\pi$ into $\pi\circ (a-1,b)$ instead; and this is the intuition behind the proof of Lemmas~\ref{lem:Y0} and~\ref{lem:Z0} below. However, this does not work in every case, and one has to deal with a few pathological cases on the side. 
We mention lastly that there exist matrices of generalized events having a non-zero $\Pr$-determinant but whose $(a-1)$th and $a$th rows coincide with those of $N^{a,D}$.\\

In order to prove Lemma \ref{lem:pdet=0} we need some notation. We fix an integer $a\in \{2,3,\ldots, k-1\}$ and a subset $D\subseteq [a-2]$
We define $\fS_{k,a}$ as the set of permutations $\pi$ of $[k]$ such that $\pi(i)=i$ for all $i\in[a-2]$. We also define some subsets $K_{i,j}$ of $[k]$ by setting $K_{i,i}=\emptyset$ if $i\in [a-2]\setminus D$, $K_{i,i}=[k]$ if $i\in D\cup \{a-1\}$, and $K_{i,j}=\,]i,j]$ if $i\in\{a-1,\ldots,k-1\},j\in[k]$ and $(i,j)\neq (a-1,a-1)$. By definition, a triple $(\SS,f,\pi)\in\Om_k$ is in $\Om(N^{a,D})$ if and only if the permutation $\pi$ is in $\fS_{k,a}$, and for all $i\in[k-1]$ the subset $K_{i,\pi(i)}$ is contained in $S_{f(i)}$.
For $\pi\in \fS_{k,a}$, $f$ a surjection from $[k-1]$ to $[r]$, and $t\in [r]$ we define the subset $R_{f,\pi,t}$ of $[k]$ by 
$$\ds R_{f,\pi,t}=\bigcup_{i\in[k-1],~f(i)=t}K_{i,\pi(i)}.$$ 
Observe that a triple $(\SS,f,\pi)\in\Om_k$ is in $\Om(N^{a,D})$ if and only if the permutation $\pi$ is in $\fS_{k,a}$ and for all $t\in[r]$ the subset $R_{f,\pi,t}$ is contained in $S_t$. Lastly for $j\in[k-1]$ we denote 
$$\ds H_{f,\pi,j}=\bigcup_{i\in[k-1]\setminus \{j\},~f(i)=f(j)}K_{i,\pi(i)},$$ 
so that $R_{f,\pi,f(j)}= H_{f,\pi,j}\cup K_{j,\pi(j)}$.

We now define a partition of the set $\Om(N^{a,D})$, by declaring that a triple $(\SS,f,\pi)\in\Om(N^{a,D})$ is in 
\begin{itemize}
\item $\mW$ if $\ds f(a-1)=f(a)$, 
\item $\mX$ if $\ds f(a-1)\neq f(a)$, $\ds a\notin H_{f,\pi,a-1}$, and $\ds a\notin H_{f,\pi,a}$, 
\item $\mY$ if $\ds f(a-1)\neq f(a)$, $\ds a\notin H_{f,\pi,a-1}$, and $\ds a\in H_{f,\pi,a}$, 
\item $\mZ$ if $\ds f(a-1)\neq f(a)$, $\ds a\in H_{f,\pi,a-1}$.
\end{itemize}

Since $\Om(N^{a,D})=\mW\uplus \mX\uplus \mY\uplus \mZ$, Equation~\eqref{eq:expand-Pdet} gives
$$\Pdet(N^{a,D})=\frac{W+X+Y+Z}{|\Om|},$$
where 
\begin{equation}\label{eq:defXYZ}
W=\!\sum_{(\SS,f,\pi)\in \mW}\!\eps(\pi),~~ X=\!\sum_{(\SS,f,\pi)\in \mX}\!\eps(\pi),~~ Y=\!\sum_{(\SS,f,\pi)\in \mY}\!\eps(\pi),~ \textrm{ and } ~ Z=\!\sum_{(\SS,f,\pi)\in \mZ}\!\eps(\pi).
\end{equation}
We will now show that $W=X=Y=Z=0$.\\ 

\noindent \textbf{Remark.} In the particular case $r=k-1$, the surjection $f$ is a bijection and $H_{f,\pi,j}=\emptyset$ for all $j$. Therefore, in this case $\mX=\Om(N^{a,D})$, while $\mW=\mY=\mZ=\emptyset$.\\


\begin{lemma}\label{lem:W0}
The sum $W$ defined in~\eqref{eq:defXYZ} is equal to 0.
\end{lemma}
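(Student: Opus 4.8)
The plan is to show that the sum $W = \sum_{(\SS,f,\pi)\in\mW}\eps(\pi)$ vanishes by constructing a sign-reversing involution on the set $\mW$. Recall that $\mW$ consists of those triples $(\SS,f,\pi)\in\Om(N^{a,D})$ for which $f(a-1)=f(a)$. The guiding intuition (following the remark preceding the lemma) is that when $f(a-1)=f(a)$, the two rows $a-1$ and $a$ of the matrix feed into the \emph{same} subset $S_{f(a-1)}=S_{f(a)}$, so swapping the values $\pi(a-1)$ and $\pi(a)$ of the permutation should leave the membership constraints essentially intact while flipping $\eps(\pi)$.

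**First I would** define the candidate involution $\phi$ on $\mW$ by $\phi(\SS,f,\pi)=(\SS,f,\pi\circ(a-1,a))$, where $(a-1,a)$ is the transposition of $a-1$ and $a$. Setting $\pi'=\pi\circ(a-1,a)$, this map clearly satisfies $\phi\circ\phi=\Id$ and $\eps(\pi')=-\eps(\pi)$, so once I verify that $\phi$ maps $\mW$ into itself, the standard pairing argument forces $W=0$. The condition $f(a-1)=f(a)$ is preserved since $f$ is untouched, and $\pi'\in\fS_{k,a}$ because $(a-1,a)$ fixes $[a-2]$ (as $a-1,a\notin[a-2]$). The work is therefore to check that $(\SS,f,\pi)\in\Om(N^{a,D})$ implies $(\SS,f,\pi')\in\Om(N^{a,D})$, i.e.\ that for every $t\in[r]$ one still has $R_{f,\pi',t}\subseteq S_t$.

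**The key point** is that swapping $\pi(a-1)$ and $\pi(a)$ only affects the indices $i\in\{a-1,a\}$ in the union defining $R_{f,\pi,t}$, and only for the single value $t=f(a-1)=f(a)$; all other subsets $R_{f,\pi,t}$ are literally unchanged. For that one value $t$, I would argue that the relevant portion of the union is unchanged \emph{as a set}. Here I must use the explicit shape of the $K_{i,j}$: for $i=a-1$ we have $K_{a-1,j}=\{a\}\cup\,]a,j]$ for $j\neq a-1$ (since $K_{a-1,a-1}=[k]$, which I would handle as a degenerate case), while $K_{a,j}=\,]a,j]$. Thus $K_{a-1,\pi(a)}\cup K_{a,\pi(a-1)}$ versus $K_{a-1,\pi(a-1)}\cup K_{a,\pi(a)}$ differ only by the inclusion of the element $a$ and the two intervals $]a,\pi(a-1)]$ and $]a,\pi(a)]$, which appear on both sides. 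Combined with the remaining terms $H_{f,\pi,a-1}\cap H_{f,\pi,a}$ that are common to both, the two unions coincide, so $R_{f,\pi',t}=R_{f,\pi,t}\subseteq S_t$.

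**The main obstacle** I expect is the bookkeeping in the degenerate case $\pi(a-1)=a-1$ or $\pi(a)=a-1$, where the entry $K_{a-1,a-1}=[k]$ (coming from the diagonal event $J^{f(a-1)}_{a-1,a-1}$, the event ``$S_{f(a-1)}=[k]$'') behaves differently from the interval formula, and one must confirm that $\phi$ does not leave $\Om(N^{a,D})$ in these edge cases. Since $\pi\in\fS_{k,a}$ only constrains values on $[a-2]$, both $\pi(a-1)$ and $\pi(a)$ can a priori equal $a-1$, so I would treat the possibility $\pi(a-1)=a-1$ (forcing $S_t=[k]$, hence every membership constraint in row $t$ is automatically met) separately and check directly that the swapped constraint is still satisfied. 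Once these boundary cases are dispatched, the involution $\phi$ is well-defined on all of $\mW$, it is sign-reversing and fixed-point-free (as $a-1\neq a$ guarantees $\pi'\neq\pi$), and therefore $W=0$, as claimed.
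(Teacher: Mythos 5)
Your proof is correct and takes essentially the same route as the paper: the same sign-reversing involution $\pi \mapsto \pi\circ(a-1,a)$, justified by the same key observation that $K_{a-1,\pi(a-1)}\cup K_{a,\pi(a)} = \{a\}\cup\,]a,\pi(a-1)]\cup\,]a,\pi(a)]$ is symmetric under swapping $\pi(a-1)$ and $\pi(a)$, while all other $R_{f,\pi,t}$ are untouched. The only cosmetic difference is that the paper dispenses with your separate degenerate case by noting that the formula $K_{a-1,j}=\{a\}\cup\,]a,j]$ holds uniformly for all $j\in[k]$, including $j=a-1$ where it gives $[k]$.
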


\begin{proof}
We consider the mapping $\phi$ defined on $\mW$ by setting $\phi(\SS,f,\pi)=(\SS,f,\pi')$, where $\pi'=\pi\circ (a-1,a)$ (where $(a-1,a)$ is the transposition of the integers $a-1$ and $a$). Since $\phi$ changes the sign of the permutation $\pi$, it suffices to prove that $\phi$ is an involution on $\mW$. Clearly this amounts to proving that $(\SS,f,\pi')\in \mW$.
It is clear that $R_{f,\pi',t}=R_{f,\pi,t}$ for all $t\in[r]\setminus \{f(a)\}$. Moreover, remembering $K_{a-1,j}=\{a\}\cup \,]a,j]$ and $K_{a,j}=\,]a,j]$ for all $j\in [k]$ gives 
$$K_{a-1,\pi(a-1)}\cup K_{a,\pi(a)}=\{a\}\cup \,]a,\pi(a-1)]\cup \,]a,\pi(a)]=K_{a-1,\pi'(a-1)}\cup K_{a,\pi'(a)},$$ 
hence $R_{f,\pi',f(a)}=R_{f,\pi,f(a)}$. Thus, for all $t\in[r]$, $R_{f,\pi',t}=R_{f,\pi,t}\subseteq S_t$, that is, $(\SS,f,\pi')\in\Om(N^{a,D})$. Since $f(a-1)=f(a)$ we have $(\SS,f,\pi')\in\mW$ as wanted.
\end{proof}

\begin{lemma}\label{lem:X0}
The sum $X$ defined in~\eqref{eq:defXYZ} is equal to 0.
\end{lemma}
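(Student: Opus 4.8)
The plan is to exhibit a sign-reversing involution on $\mX$, exactly as the displayed intuition suggests. Building on the strategy used for $W$ in Lemma~\ref{lem:W0}, but now accounting for the fact that $f(a-1)\neq f(a)$ on $\mX$, I would define $\phi(\SS,f,\pi)=(\SS',f',\pi')$ by simultaneously transposing $a-1$ and $a$ in both $f$ and $\pi$, \emph{and} swapping the membership of the single element $a$ between the two (now distinct) subsets $S_{f(a-1)}$ and $S_{f(a)}$. Precisely, set $\pi'=\pi\circ(a-1,a)$ and $f'=f\circ(a-1,a)$, and let $\SS'$ agree with $\SS$ except that $a\in S'_{f(a-1)}\iff a\in S_{f(a)}$ and $a\in S'_{f(a)}\iff a\in S_{f(a-1)}$. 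Swapping the membership of $a$ between two subsets changes no occurrence count, so $\SS'$ still lies in $\mS_{\pp,r}$ and $(\SS',f')\in\Om$.

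That $\phi$ is an involution reversing the sign of $\pi$ is immediate: applying $\phi$ twice restores $f$ and $\pi$ (as $(a-1,a)$ is its own inverse) and undoes the membership swap (because $f'$ interchanges the roles of $f(a-1)$ and $f(a)$), while $\eps(\pi')=-\eps(\pi)$. Hence, provided $\phi$ maps $\mX$ into itself, the terms of $X$ cancel in pairs and $X=0$.

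The substance of the argument, and the step I expect to be the main obstacle, is checking that $(\SS',f',\pi')$ again lies in $\mX$. For this I would compute the required subsets $R_{f',\pi',t}$ and compare them with $S'_t$. For $t\notin\{f(a-1),f(a)\}$ nothing changes, so $R_{f',\pi',t}\subseteq S'_t=S_t$ is inherited from $(\SS,f,\pi)\in\Om(N^{a,D})$. The two delicate fibers are $t=f(a-1)$ and $t=f(a)$. Using $K_{a-1,j}=\{a\}\cup\,]a,j]$, $K_{a,j}=\,]a,j]$, and $f(a-1)\neq f(a)$, one finds
$$R_{f',\pi',f(a-1)}=\,]a,\pi(a-1)]\cup H_{f,\pi,a-1},\qquad R_{f',\pi',f(a)}=K_{a-1,\pi(a)}\cup H_{f,\pi,a}.$$
Here the defining conditions $a\notin H_{f,\pi,a-1}$ and $a\notin H_{f,\pi,a}$ of $\mX$ are precisely what make the element $a$ behave cleanly: $a\notin R_{f',\pi',f(a-1)}$, so the possible deletion of $a$ from $S_{f(a-1)}$ in $\SS'$ is harmless, whereas $a\in R_{f',\pi',f(a)}$, which is accommodated because the swap forces $a\in S'_{f(a)}$ (using $a\in S_{f(a-1)}$, itself a consequence of $K_{a-1,\pi(a-1)}\subseteq S_{f(a-1)}$). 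Off the element $a$ the two required sets coincide with $R_{f,\pi,f(a-1)}$ and $R_{f,\pi,f(a)}$, and $\SS'$ agrees with $\SS$ off $a$, so both containments follow. Finally I would check the remaining defining conditions of $\mX$ for $(\SS',f',\pi')$: $f'(a-1)\neq f'(a)$ is immediate, and $a\notin H_{f',\pi',a-1}$, $a\notin H_{f',\pi',a}$ reduce to $a\notin H_{f,\pi,a}$, $a\notin H_{f,\pi,a-1}$ after noting $H_{f',\pi',a-1}=H_{f,\pi,a}$ and $H_{f',\pi',a}=H_{f,\pi,a-1}$. The only situation needing a moment's care is $\pi(a-1)=a-1$ or $\pi(a)=a-1$, where $K_{a-1,\cdot}=[k]$ and the containments degenerate to forcing $S'_{f(a-1)}$ or $S'_{f(a)}$ to equal $[k]$; this again follows from the corresponding statement for $\SS$ together with the swap of $a$.
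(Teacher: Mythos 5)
Your proposal is correct and is essentially the paper's own proof: the same involution $\phi(\SS,f,\pi)=(\SS',f',\pi')$ with $f'=f\circ(a-1,a)$, $\pi'=\pi\circ(a-1,a)$, and the membership of $a$ swapped between $S_{f(a-1)}$ and $S_{f(a)}$, verified fiber by fiber using the identities $K_{a-1,j}=\{a\}\cup\,]a,j]$, $K_{a,j}=\,]a,j]$, and $H_{f',\pi',a-1}=H_{f,\pi,a}$, $H_{f',\pi',a}=H_{f,\pi,a-1}$. The only cosmetic difference is that you phrase the swap symmetrically and single out the degenerate case $K_{a-1,a-1}=[k]$ explicitly, both of which match the paper's argument in substance.
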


\begin{proof}
We first define a mapping $\phi$ on $\mX$.
Let $(\SS,f,\pi)$ be in $\mX$. 
Note that $a\in S_{f(a-1)}$ (because $a\in K_{a-1,\pi(a-1)}\subseteq S_{f(a-1)}$).
We now denote by $\SS'=(S_1',\ldots,S_r')$ the tuple obtained from $\SS=(S_1,\ldots,S_r)$ by \emph{exchanging the presence or absence of the integer $a$ between the subsets $S_{f(a-1)}$ and $S_{f(a)}$}. More precisely, $\SS'=\SS$ if $a\in S_{f(a)}$, and otherwise $S'_{f(a)}=S_{f(a)}\cup \{a\}$, $S'_{f(a-1)}=S_{f(a-1)}\setminus \{a\}$, and $S_i'=S_i$ for all $i\notin\{ f(a-1),f(a)\}$.
We now define a mapping $\phi$ on $\mX$ by setting $\phi(\SS,f,\pi)=(\SS',f',\pi')$, where $\SS'$ is defined as above, $f'=f\circ (a-1,a)$, and $\pi'=\pi\circ (a-1,a)$. We observe the following identities: 
\begin{eqnarray}
K_{a-1,\pi'(a-1)}=K_{a,\pi(a)}\cup \{a\},\quad K_{a,\pi'(a)}=K_{a-1,\pi(a-1)}\setminus \{a\},\label{eq:Ka}\\
H_{f',\pi',a-1}=H_{f,\pi,a},\textrm{ and } H_{f',\pi',a}=H_{f,\pi,a-1}.\label{eq:Ha}
\end{eqnarray}
We will now prove that $\phi$ is an involution on $\mX$. Observe that this immediately implies that $X=0$ because $\phi$ changes the sign of the permutation $\pi$.\\

\noindent \textbf{Claim.} The mapping $\phi$ is an involution on $\mX$.\\

\noindent \textbf{Proof of the claim.} Clearly, it suffices to prove that \emph{if $(\SS,f,\pi)$ is in $\mX$ then $(\SS',f',\pi')=\phi(\SS,f,\pi)$ is in $\mX$}. Let $(\SS,f,\pi)\in\mX$. Clearly, $\SS'$ is in $\mS_{\pp,r}$, $f'$ is a surjection from $[k-1]$ to $[r]$, and $\pi'$ is in $\fS_{k,a}$. Hence $\phi(\SS,f,\pi)$ is in $\Om_k$. 
Next, we show that $(\SS',f',\pi')$ is in $\Om(N^{a,D})$, that is, $R_{f',\pi',t}\subseteq S_t'$ for all $t\in[r]$. First, for $t\notin \{f(a-1),f(a)\}$, we have $R_{f',\pi',t}=R_{f,\pi,t}$ and $S'_t=S_t$, hence $R_{f',\pi',t}\subseteq S'_t$. 
For $t=f(a-1)=f'(a)$, Equations~\eqref{eq:Ka} and~\eqref{eq:Ha} give
$$R_{f',\pi',t}=H_{f',\pi',a}\cup K_{a,\pi'(a)}=H_{f,\pi,a-1}\cup K_{a-1,\pi(a-1)}\setminus \{a\}=R_{f,\pi,t}\setminus \{a\}.$$
Moreover, the set $S'_t$ does not differ from $S_t$ except maybe for the presence or absence of $a$, hence $R_{f',\pi',t}\subseteq S'_t$. 
Lastly, for $t=f(a)=f'(a-1)$, Equations~\eqref{eq:Ka} and~\eqref{eq:Ha} give
$$R_{f',\pi',t} =H_{f',\pi',a-1}\cup K_{a-1,\pi'(a-1)}=H_{f,\pi,a}\cup K_{a,\pi(a)}\cup \{a\}=R_{f,\pi,t}\cup \{a\}.$$
Moreover, $S'_t=S_t\cup \{a\}$, hence $R_{f',\pi',t}\subseteq S'_t$. 
This shows that $(\SS',f',\pi')$ is in $\Om(N^{a,D})$. Furthermore, $(\SS',f',\pi')$ is in $\mX$ because of~\eqref{eq:Ha} (and the fact that $(\SS,f,\pi)$ in $\mX$). This completes the proof of the claim, hence  $X=0$.
\end{proof}

\begin{lemma}\label{lem:Y0}
The sum $Y$ defined in~\eqref{eq:defXYZ} is equal to 0.
\end{lemma}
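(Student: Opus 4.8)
The plan is to prove $Y=0$ by constructing a sign-reversing involution $\psi$ on the set $\mY$. As in Lemmas~\ref{lem:W0} and~\ref{lem:X0}, once $\psi\colon\mY\to\mY$ satisfies $\eps(\pi')=-\eps(\pi)$ for $\psi(\SS,f,\pi)=(\SS',f',\pi')$ and $\psi\circ\psi$ is the identity, the terms of $Y=\sum_{(\SS,f,\pi)\in\mY}\eps(\pi)$ cancel in pairs and $Y=0$ follows. So the entire difficulty is to find the correct rule for $\psi$ and to verify it is well defined.

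Before guessing the rule, it is worth recording why the involution of Lemma~\ref{lem:X0} does not survive on $\mY$. Applying the map $f'=f\circ(a-1,a)$, $\pi'=\pi\circ(a-1,a)$ to a triple of $\mY$ uses $f(a-1)\neq f(a)$ and the identities~\eqref{eq:Ka}, \eqref{eq:Ha} to give $H_{f',\pi',a-1}=H_{f,\pi,a}$ and $H_{f',\pi',a}=H_{f,\pi,a-1}$; since on $\mY$ one has $a\in H_{f,\pi,a}$ and $a\notin H_{f,\pi,a-1}$, the image satisfies $a\in H_{f',\pi',a-1}$ and therefore lands in $\mZ$, not in $\mY$ (and no transfer of the element $a$ is triggered, because $a\in H_{f,\pi,a}\subseteq S_{f(a)}$ is already forced). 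Thus the transposition $(a-1,a)$ is the wrong one for $\mY$, and the rule must instead be keyed to an index that certifies the condition $a\in H_{f,\pi,a}$.

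Following the intuition preceding Lemma~\ref{lem:pdet=0}, I would therefore proceed as follows. Every $(\SS,f,\pi)\in\mY$ has $a\in H_{f,\pi,a}$, so there is an index $b\in[k-1]\setminus\{a\}$ with $f(b)=f(a)$ and $a\in K_{b,\pi(b)}$; note $b\neq a-1$ because $f(a-1)\neq f(a)$. I would fix a canonical such witness $b=b(\SS,f,\pi)$, and define $\psi$ from the transposition $(a-1,b)$, setting $\pi'=\pi\circ(a-1,b)$ together with the minimal modification $\SS'$ of $\SS$ (a transfer of the single element $a$ between the subsets indexed by $f(a-1)$ and $f(a)$) required to restore the inclusions $R_{f',\pi',t}\subseteq S'_t$. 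The geometric fact that makes this plausible is the nesting $]a,\pi(b)]\subseteq\,]b,\pi(b)]=K_{b,\pi(b)}$, valid precisely because $a\in K_{b,\pi(b)}$, which controls how the constraint set of row $a-1$ changes when $\pi(a-1)$ is replaced by $\pi(b)$.

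I expect the crux, and the place where the argument becomes technical, to be simultaneously (i) keeping the image inside $\mY$ rather than in $\mW$ or $\mZ$, and (ii) arranging that $b$ is again the designated witness of $\psi(\SS,f,\pi)$ so that $\psi\circ\psi$ is the identity. Both are delicate because changing $\pi(a-1)$ can move the element $a$ into or out of the unions $H_{f,\pi,a-1}$ and $H_{f,\pi,a}$, so the defining inequalities of $\mY$ are not obviously self-dual under $\psi$. As the authors warn, a single clean rule will fail on a few degenerate configurations --- for instance when $\pi(a-1)=a-1$, when the only available witness is a block index $b\in D$ with $K_{b,b}=[k]$, or when the canonical witness of the image differs from $b$ --- and these pathological triples will have to be carved out and shown to cancel among themselves by a separate, secondary pairing. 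Establishing that the main rule together with this side argument indeed partitions all of $\mY$ into sign-reversing pairs is the heart of the proof.
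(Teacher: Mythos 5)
There is a genuine gap here, and it is structural, not just a matter of unfinished details. Your main rule --- swap $\pi(a-1)$ and $\pi(b)$ and transfer the single element $a$ between $S_{f(a-1)}$ and $S_{f(a)}$ --- swaps the $\pi$-values of two rows lying in \emph{different} blocks of $f$ (row $a-1$ is in block $f(a-1)$, row $b$ in block $f(b)=f(a)\neq f(a-1)$). This exchanges interval constraints between the two distinct subsets $S_{f(a-1)}$ and $S_{f(a)}$, and those intervals generally contain many elements besides $a$, so moving the one element $a$ around cannot restore the inclusions. Concretely, take $k=6$, $r=3$, $a=3$, $D=\emptyset$, with $f(1)=f(2)=1$, $f(3)=f(5)=2$, $f(4)=3$, and $\pi(1)=1$, $\pi(2)=3$, $\pi(3)=6$, $\pi(4)=2$, $\pi(5)=4$, $\pi(6)=5$, and $S_1=\{3\}$, $S_2=[6]$, $S_3=\{1,2,5,6\}$ (recall $S_2=[k]$ is allowed in $\Om$). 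This triple lies in $\mY$ with witness $b=5$, since $3\in K_{5,\pi(5)}=\,]5,4]$. Your map gives $\pi'(2)=\pi(5)=4$, so the row-$2$ constraint becomes $K_{2,4}=\,]2,4]=\{3,4\}\subseteq S_1'$, which fails because $4\notin S_1$ and no transfer of the element $a=3$ can change that: the image is not even in $\Om(N^{a,D})$. Moreover, even when the inclusions happen to survive, one has $K_{b,\pi'(b)}=\,]b,\pi(a-1)]$, which typically no longer contains $a$, so the image drops into $\mX$ and $b$ is no longer a witness; thus $\psi\circ\psi$ cannot be the identity. Finally, your proposal concedes that ``the heart of the proof'' (staying in $\mY$, preservation of the witness, and the treatment of degenerate cases by a ``secondary pairing'') is left open, so even setting aside the counterexample, no complete argument is given.

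For contrast, the paper's involution never touches $\SS$ or $f$ on $\mY$ and never uses the index $a-1$. It partitions $\mY=\mY_1\uplus\mY_2\uplus\mY_3$ according to whether $H_{f,\pi,a}\cup K_{a,\pi(a)}$ and $H_{f,\pi,a}\cup K_{a,\pi(k)}$ equal $[k]$, and sets $\pi'=\pi\circ(a,k)$ on $\mY_1$, $\pi'=\pi\circ(b,k)$ on $\mY_2$, and $\pi'=\pi\circ(a,b)$ on $\mY_3$, with exactly your witness $b$. The point you are missing is that these are the \emph{safe} swaps: rows $a$ and $b$ lie in the same block, and row $k$ carries no constraint, so each swap either preserves the union $R_{f,\pi,f(a)}$ outright (on $\mY_3$, because $a\in\,]b,\pi(b)]$ forces $]a,\pi(a)]\cup\,]b,\pi(b)]=\,]a,\pi'(a)]\cup\,]b,\pi'(b)]$) or takes place inside a block whose subset is already forced to equal $[k]$ (on $\mY_1$ and $\mY_2$). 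That is precisely what makes the three things your sketch defers --- membership in $\Om(N^{a,D})$, membership in the same piece $\mY_i$, and recovery of the same witness $b$ --- actually verifiable.
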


\begin{proof}
We define a partition of $\mY$ by declaring that a triple $(\SS,f,\pi)\in\mY$ is in
\begin{itemize}
\item $\mY_1$ if $\ds H_{f,\pi,a}\cup K_{a,\pi(a)}=[k]$, and $\ds H_{f,\pi,a}\cup K_{a,\pi(k)}=[k]$,
\item $\mY_2$ if $\ds H_{f,\pi,a}\cup K_{a,\pi(a)}=[k]$, and $\ds H_{f,\pi,a}\cup K_{a,\pi(k)}\neq [k]$,
\item $\mY_3$ if $\ds H_{f,\pi,a}\cup K_{a,\pi(a)}\neq [k]$.
\end{itemize}
Next, we define a mapping $\phi$ on $\mY$ by setting $\phi(\SS,f,\pi)=(\SS,f,\pi')$, with 
\begin{itemize}
\item $\pi'=\pi\circ (a,k)$ if $(\SS,f,\pi)\in \mY_1$, 
\item $\pi'=\pi\circ (b,k)$ if $(\SS,f,\pi)\in \mY_2$, 
\item $\pi'=\pi\circ (a,b)$ if $(\SS,f,\pi)\in \mY_3$, 
\end{itemize}
where $b$ is the greatest integer in $[k-1]\setminus \{a\}$ such that $f(b)=f(a)$ and $a\in K_{b,\pi(b)}$ (such an integer $b$ necessarily exists since $a\in H_{f,\pi,a}$). Since the mapping $\phi$ clearly changes the sign of the permutation $\pi$, showing that $\phi$ is an involution on $\mY$ would imply $Y=0$. In the rest of this proof we show that $\phi$ is an involution on $\mY$ and in fact an involution on each of the subsets $\mY_1,\mY_2,\mY_3$. 
\\
\noindent\textbf{Claim 1.} The mapping $\phi$ described above is an involution on $\mY_1$.\\

\noindent\textbf{Proof of Claim 1.} 
It clearly suffices to prove that if $(\SS,f,\pi)$ is in $\mY_1$ then $(\SS,f,\pi')=\phi(\SS,f,\pi)$ is in $\mY_1$. First observe that by definition of $\mY_1$,
$$R_{f,\pi,f(a)}=[k]=R_{f,\pi',f(a)}.$$
Hence $R_{f,\pi,t}=R_{f,\pi',t}$ for all $t\in [r]$. Thus $R_{f,\pi',t}=R_{f,\pi,t}\subseteq S_t$, that is, $(\SS,f,\pi')$ is in $\Om(N^{a,D})$. 
Next, we show that $(\SS,f,\pi')\in\mY$. 
Since $(\SS,f,\pi)$ is in $\mY$ we get $\ds a\notin H_{f,\pi,a}=H_{f,\pi',a-1}$. 
Moreover, $a\in H_{f,\pi',a}$ because $a\in [k]=R_{f,\pi',f(a)}$ but $a\notin K_{a,\pi'(a)}$. Thus $(\SS,f,\pi')\in\mY$. Lastly, $(\SS,f,\pi)\in\mY_1$ clearly implies $(\SS,f,\pi')\in\mY_1$.
This completes the proof of Claim~1.\\

\noindent\textbf{Claim 2.} The mapping $\phi$ is an involution on $\mY_2$.\\

\noindent\textbf{Proof of Claim 2.} 
Let $(\SS,f,\pi)$ be in $\mY_2$. 
We need to prove that $(\SS,f,\pi')=\phi(\SS,f,\pi)$ is in $\mY_2$ and that $b$ is also the greatest integer in $[k-1]\setminus \{a\}$ such that $f(b)=f(a)$ and $a\in K_{b,\pi'(b)}$. 
First observe that $b>a-2$, otherwise we would have $\pi(b)=b$ and  $K_{b,\pi(b)}$ equals $\emptyset$ or $[k]$ but either is impossible (indeed $K_{b,\pi(b)} \neq \emptyset$ because $a\in K_{b,\pi(b)}$, and $K_{b,\pi(b)} \neq [k]$ because $(\SS,f,\pi)$ is in $\mY_2$), hence in particular $\pi'=\pi\circ(b,k)$ is in $\fS_{a,k}$. 

We now show that $(\SS,f,\pi')$ is in $\Om(N^{a,D})$, that is, $R_{f,\pi',t}\subseteq S_t$ for all $t\in[r]$. 
First, for $t\neq f(a)$, one has $R_{f,\pi',t}=R_{f,\pi,t}\subseteq S_t$. 
Second, for $t=f(a)$, we have
$$[k]=H_{f,\pi,a}\cup K_{a,\pi(a)}=R_{f,\pi,f(a)}\subseteq S_{f(a)}.$$
Hence, $S_{f(a)}=[k]$ and $R_{f,\pi',f(a)}\subseteq S_{f(a)}$. Thus $(\SS,f,\pi')$ is in $\Om(N^{a,D})$.

We now prove that $(\SS,f,\pi')$ is in $\mY_2$. Let $c$ be the unique integer in $[k]\setminus H_{f,\pi,a}$ such that $]c,a]\subseteq H_{f,\pi,a}$ (the integer $c$ necessarily exists since $a\in H_{f,\pi,a}\neq [k]$). We will show
\begin{equation}\label{eq:positionsClaim2}
a\leq \pi(k)<c\leq b<k, \textrm{ and }a\leq \pi(b)<c.
\end{equation}
Since $b\notin [a-2]$, and $b\neq \{a-1,a\}$, we get $a<b<k$. Moreover, by definition of $b$ and $c$, we get $]b,a]\subseteq\,]c,a]$, hence $a< c\leq b$. Furthermore, we know $]c,a]\cup\,]a,\pi(k)]\subseteq H_{f,\pi,a}\cup \,]a,\pi(k)]\neq [k]$, hence $a\leq \pi(k)<c$. 
Lastly, we know $a\in]b,\pi(b)]$ and $]c,b]\cup]b,\pi(b)]\subseteq H_{f,\pi,a}\neq [k]$, hence $a\leq \pi(b)<c$. Thus~\eqref{eq:positionsClaim2} holds. It implies $a\in ]b,\pi(k)]\subset H_{f,\pi',a}$. Thus $(\SS,f,\pi')$ is in $\mY$.
Moreover $c\notin \,]b,\pi(k)]$, hence $c\notin H_{f,\pi',a}$. Thus $H_{f,\pi',a}\neq [k]$. 
We also know $H_{f,\pi,a}\cup\,]a,\pi(a)]=[k]$, hence $c\in\,]a,\pi(a)]$, which implies $]a,c]\subseteq ]a,\pi(a)]$. 
Moreover, $]c,b]\subseteq]c,a]\subseteq H_{f,\pi,a}$ and $\pi(b)<c$, hence 
$\ds ]c,b]\subseteq \bigcup_{i\in[k-1]\setminus \{a,b\},~f(i)=f(a)}K_{i,\pi(i)}$.
Hence, 
$$ H_{f,\pi',a} \cup]a,\pi(a)]\supseteq \,]c,b]\cup\,]b,a]\cup\,]a,c]=[k].$$ 
Thus $(\SS,f,\pi')$ is in $\mY_2$.
Lastly, we know $a\in\,]b,\pi(k)]=K_{b,\pi'(b)}$, hence $b$ is the greatest integer in $[k-1]\setminus \{a\}$ such that $f(b)=f(a)$ and $a\in K_{b,\pi'(b)}$. This shows that $\phi$ is an involution on $\mY_2$ and completes the proof of Claim~2.


\medskip

\noindent\textbf{Claim 3.} The mapping $\phi$ is an involution on $\mY_3$.\\

\noindent\textbf{Proof of Claim 3.} 
Let $(\SS,f,\pi)$ be in $\mY_3$. We need to prove that $(\SS,f,\pi')=\phi(\SS,f,\pi)$ is in $\mY_3$ and that $b$ is the greatest integer in $[k-1]\setminus \{a\}$ such that $f(b)=f(a)$ and $a\in K_{b,\pi'(b)}$. 
First observe that $b\notin [a-2]$ (indeed $K_{b,\pi(b)} \neq \emptyset$ because $a\in K_{b,\pi(b)}$ and $K_{b,\pi(b)} \neq [k]$ because $(\SS,f,\pi)$ is in $\mY_3$), hence $\pi'=\pi\circ(a,b)$ is in $\fS_{a,k}$. We now show
\begin{equation}\label{eq:positionsClaim3}
a\leq \pi(a)<b<k, \textrm{ and }a\leq \pi(b)<b.
\end{equation}
Since $b\notin [a-2]$, and $b\neq \{a-1,a\}$, we get $a<b<k$. Moreover, we know $a\in ]b,\pi(b)]$, hence $a\leq \pi(b)<b$. 
Moreover, we know $]a,\pi(a)]\cup \,]b,\pi(b)]\subseteq H_{f,\pi,a}\cup \,]a,\pi(a)]\neq [k]$, hence $a\leq \pi(a)<b$. Thus~\eqref{eq:positionsClaim3} holds. It implies 
\begin{equation}\label{eq:positionsClaim3bis}
]a,\pi'(a)]\cup \,]b,\pi'(b)]=\,]b, \max(\pi(a),\pi(b))]=\,]a,\pi(a)]\cup \,]b,\pi(b)]
\end{equation}
hence $R_{f,\pi',f(a)}=R_{f,\pi,f(a)}$. Thus, for all $t\in[r]$, $R_{f,\pi',t}=R_{f,\pi,t}\subseteq S_t$, that is, $(\SS,f,\pi')$ is in $\Om(N^{a,D})$.

Equation~\eqref{eq:positionsClaim3} also implies $a\in \,]b,\pi(a)]\subseteq H_{f,\pi',a}$. Thus $(\SS,f,\pi')$ is in $\mY$.
Moreover, \eqref{eq:positionsClaim3bis} gives $H_{f,\pi',a}\cup \,]a,\pi'(a)]=H_{f,\pi,a}\cup \,]a,\pi(a)]\neq [k]$. Thus $(\SS,f,\pi')$ is in $\mY_3$. Lastly, $a\in\,]b,\pi(a)]=K_{b,\pi'(b)}$, hence $b$ is the greatest integer in $[k-1]\setminus \{a\}$ such that $f(b)=f(a)$ and $a\in K_{b,\pi'(b)}$. This shows that $\phi$ is an involution on $\mY_3$ and completes the proof of Claim~3.

Claims~1,2,3 imply Lemma~\ref{lem:Y0}.
\end{proof}

\begin{lemma}\label{lem:Z0}
The sum $Z$ defined in~\eqref{eq:defXYZ} is equal to 0.
\end{lemma}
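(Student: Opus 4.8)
\textbf{Plan for the proof of Lemma~\ref{lem:Z0}.}

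The plan is to show $Z=0$ by exhibiting a sign-reversing involution on $\mZ$, in direct parallel to the proofs of Lemmas~\ref{lem:X0} and~\ref{lem:Y0}. By definition, every triple $(\SS,f,\pi)\in\mZ$ satisfies $f(a-1)\neq f(a)$ and $a\in H_{f,\pi,a-1}$, so there is at least one index $i\neq a-1$ with $f(i)=f(a-1)$ and $a\in K_{i,\pi(i)}$. As in Claim~2 and Claim~3 of Lemma~\ref{lem:Y0}, I would let $b$ be the \emph{greatest} integer in $[k-1]\setminus\{a-1\}$ with $f(b)=f(a-1)$ and $a\in K_{b,\pi(b)}$; the role formerly played by the index $a$ is now played by $a-1$, since the $a\in H_{f,\pi,a-1}$ condition is the defining feature of $\mZ$. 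The natural candidate involution swaps $\pi$ with $\pi\circ(a-1,b)$, possibly combined with a swap involving $k$ in the degenerate subcases where $R_{f,\pi,f(a-1)}=[k]$.

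First I would subdivide $\mZ$ into pieces $\mZ_1,\mZ_2,\mZ_3$ mirroring $\mY_1,\mY_2,\mY_3$, according to whether $H_{f,\pi,a-1}\cup K_{a-1,\pi(a-1)}$ equals $[k]$ (and, in the first case, whether the variant with $\pi(k)$ fills up $[k]$ as well). On $\mZ_3$, where $H_{f,\pi,a-1}\cup\,]a-1,\pi(a-1)]\neq[k]$, the map is $\pi'=\pi\circ(a-1,b)$; on the two saturated subcases one uses a transposition with $k$. For each piece I would verify the three things checked in Lemma~\ref{lem:Y0}: that $\pi'\in\fS_{k,a}$ (which requires $b>a-2$, forced because $K_{b,\pi(b)}$ is neither $\emptyset$ nor $[k]$), that $R_{f,\pi',t}\subseteq S_t$ for all $t$ so the image lands in $\Om(N^{a,D})$, and that the piece is preserved, with $b$ remaining the greatest qualifying index for $\pi'$. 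The positional inequalities analogous to~\eqref{eq:positionsClaim2} and~\eqref{eq:positionsClaim3}, relating $a$, $\pi(a-1)$, $\pi(b)$, $\pi(k)$ and the boundary index $c$ defined by $]c,a]\subseteq H_{f,\pi,a-1}$ with $c\notin H_{f,\pi,a-1}$, would be the computational core; they follow from the cyclic-interval bookkeeping exactly as before.

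The one genuinely new point, and the step I expect to require the most care, is the interaction between the two conditions $a\notin H_{f,\pi,a}$ and $a\in H_{f,\pi,a-1}$ that define $\mZ$: I must confirm that applying $\phi$ does not accidentally move the triple into $\mW$, $\mX$, or $\mY$. Since $\phi$ fixes $f$ and only permutes within the fiber $f^{-1}(f(a-1))$ via indices drawn from $[k-1]\setminus\{a\}$, the value $H_{f,\pi',a}$ is unchanged, so $a\notin H_{f,\pi',a}$ persists and the image cannot fall into $\mZ$'s complement through the $a$th row; the remaining work is to show $a\in H_{f,\pi',a-1}$ is maintained, which is precisely what tracking the greatest qualifying index $b$ guarantees. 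Once the three claims are established on $\mZ_1,\mZ_2,\mZ_3$, the sign reversal $\eps(\pi')=-\eps(\pi)$ gives $Z=0$, and combining Lemmas~\ref{lem:W0}, \ref{lem:X0}, \ref{lem:Y0}, and~\ref{lem:Z0} yields $\Pdet(N^{a,D})=0$, completing via~\eqref{eq:decomposition-Na} the proof that $\Pdet(N^{(a)})=0$.
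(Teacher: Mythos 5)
Your proposal is correct and follows essentially the same route as the paper: the same partition of $\mZ$ into $\mZ_1,\mZ_2,\mZ_3$ (the $a\mapsto a-1$ analogue of $\mY_1,\mY_2,\mY_3$), the same transpositions $(a-1,k)$, $(b,k)$, $(a-1,b)$ with $b$ the greatest qualifying index and $c$ the boundary index of $H_{f,\pi,a-1}$, and the same verifications. One small inaccuracy worth noting: $\mZ$ is defined only by $f(a-1)\neq f(a)$ and $a\in H_{f,\pi,a-1}$ (there is no condition on $H_{f,\pi,a}$), so your extra check that $a\notin H_{f,\pi',a}$ persists is unnecessary, though harmless.
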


\begin{proof}
The proof of Lemma~\ref{lem:Z0} is very similar to the proof of Lemma~\ref{lem:Y0} (actually, it is identical except ``$a$'' is replaced by ``$a-1$'' in certain places). 
We first define a partition of $\mZ$ by declaring that a triple $(\SS,f,\pi)\in\mZ$ is in
\begin{itemize}
\item $\mZ_1$ if $\ds H_{f,\pi,a-1}\cup K_{a-1,\pi(a-1)}=[k]$, and $\ds H_{f,\pi,a-1}\cup K_{a-1,\pi(k)}=[k]$,
\item $\mZ_2$ if $\ds H_{f,\pi,a-1}\cup K_{a-1,\pi(a-1)}=[k]$, and $\ds H_{f,\pi,a-1}\cup K_{a-1,\pi(k)}\neq [k]$,
\item $\mZ_3$ if $\ds H_{f,\pi,a-1}\cup K_{a-1,\pi(a-1)}\neq [k]$.
\end{itemize}
Next, we define a mapping $\phi$ on $\mZ$ by setting $\phi(\SS,f,\pi)=(\SS,f,\pi')$, with 
\begin{itemize}
\item $\pi'=\pi\circ (a-1,k)$ if $(\SS,f,\pi)\in \mZ_1$, 
\item $\pi'=\pi\circ (b,k)$ if $(\SS,f,\pi)\in \mZ_2$, 
\item $\pi'=\pi\circ (a-1,b)$ if $(\SS,f,\pi)\in \mZ_3$, 
\end{itemize}
where $b$ is the greatest integer in $[k-1]\setminus \{a-1\}$ such that $f(b)=f(a-1)$ and $a\in K_{b,\pi(b)}$ (such an integer $b$ necessarily exists since $a\in H_{f,\pi,a-1}$). Since the mapping $\phi$ clearly changes the sign of the permutation $\pi$, showing that $\phi$ is an involution on $\mZ$ would imply $Z=0$. Actually, the proof that $\phi$ is an involution on each of the subsets $\mZ_1$, $\mZ_2$, and $\mZ_3$ is almost identical to the one of Lemma~\ref{lem:Y0} (for instance, the identities~\eqref{eq:positionsClaim2} and~\eqref{eq:positionsClaim3} still hold with $c$ the integer in $[k] \setminus H_{f,\pi,a-1}$ such that $]c,a]\subseteq H_{f,\pi,a-1}$) and is left to the reader. 
\end{proof}

Lemmas~\ref{lem:X0},~\ref{lem:Y0},~\ref{lem:Z0} imply Lemma \ref{lem:pdet=0}. Thus, by~\eqref{eq:decomposition-Na}, $\Pdet(N^{(a)})=0$ for all $a\in \{2,3,\ldots, k-1\}$. This together with Equation~\eqref{eq:decompositionMal} and Lemma~\ref{lem:Mkal} complete the proof of Proposition~\ref{prop:detMal}, hence of the case~$(a)$ of Theorem~\ref{thm:main}.

\subsection{Computing the $\Pr$-determinant of the matrix $M_\be$.}\label{sec:detMbe}
In this section we compute the $\Pr$-determinant of the matrix $M_\be$.

\begin{prop}\label{prop:detMbe}
The $\Pr$-determinant of the matrix $M_\be$ defined in Proposition~\ref{prop:second-det} is 
$$\Pdet(M_\be)=\frac{|\mM_{\qq,r-1}|}{|\mS_{\pp,r}|},$$
where $\qq=(p_1,p_2-1,p_3-1,\ldots,p_k-1)$. 
\end{prop}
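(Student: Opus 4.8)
Looking at Proposition~\ref{prop:detMbe}, I need to compute $\Pdet(M_\be)$ where $M_{\be,i,j} = J_{i,j+1}^{f(i)}$ for $i \in [k-1]$ and $M_{\be,k,j} = \Om$.

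My proof plan for Proposition~\ref{prop:detMbe}:

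\textbf{Approach.} The plan is to mimic the structure used for $M_\al$ but exploit the special form of the $\be$-rule. Recall $M_{\be,i,j}=J_{i,j+1}^{f(i)}$ where $J_{i,j+1}^{f(i)}$ is the event ``$]i,j+1]\subseteq S_{f(i)}$'' when $i\neq j+1$, and the diagonal-type event ``$S_{f(i)}=[k]$'' when $i=j+1$. I would start by recording, for a triple $(\SS,f,\pi)\in\Om_k$, exactly when it lies in $\Om(M_\be)$: since the last row is all $\Om$, the permutation $\pi$ ranges over all of $\fS_k$, and the condition is that for each $i\in[k-1]$ one has $K_{i,\pi(i)}\subseteq S_{f(i)}$, where now $K_{i,j}=\,]i,j+1]$ if $i\neq j+1$ and $K_{i,j}=[k]$ if $i=j+1$. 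Just as in Lemma~\ref{lem:pdet=0}, I would aggregate these constraints per subset $t\in[r]$ via $R_{f,\pi,t}=\bigcup_{f(i)=t}K_{i,\pi(i)}$, so membership in $\Om(M_\be)$ becomes $R_{f,\pi,t}\subseteq S_t$ for all $t$.

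\textbf{Key steps.} First I would observe that the value of $\Pdet(M_\be)=\frac{1}{|\Om|}\sum_{(\SS,f,\pi)\in\Om(M_\be)}\eps(\pi)$ can be evaluated by finding which triples survive after sign-cancellation. The heuristic target probability, namely that $S_1=\{2,\ldots,k\}$, strongly suggests that the surviving configurations are precisely those in which every $S_t$ is forced to equal $[k]$ except for a single distinguished subset which must equal $\{2,\ldots,k\}$. So I would try to construct a sign-reversing involution on $\Om(M_\be)$ whose fixed points are exactly such rigid configurations. Concretely, I expect the unmatched triples to be those where each arc $a_i$ with $i\in[k-1]$ satisfies $\be(i,S_{f(i)})=k$ (forcing a very constrained tree), and I would compare the count of fixed points against $|\mM_{\qq,r-1}|$. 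The cleanest route is probably to first show, via multilinearity on the diagonal ``$S_{f(i)}=[k]$'' entries exactly as in~\eqref{eq:decomposition-Na}, that the only surviving permutation structures are highly restricted, then identify the fixed-point set with tuples in which one subset is deleted (accounting for the index shift $p_j\mapsto p_j-1$ for $j\geq 2$ and the removed $r$th subset).

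\textbf{Main obstacle.} The hardest part will be pinning down the combinatorial bijection between the surviving triples and the set $\mM_{\qq,r-1}$ with the correct sign and the correct parameter shift. The shift $\qq=(p_1,p_2-1,\ldots,p_k-1)$ indicates that in the fixed configuration one whole subset is consumed (reducing $r$ to $r-1$) and the integer $1$ is treated asymmetrically from $2,\ldots,k$ (since $p_1$ is unchanged). Tracking why $1$ is special is exactly the content of the $\be$-rule ``$j+1\notin S,\ ]i,j]\subseteq S$'': the only way to route many arcs to the root so that no cycle forms is for the relevant subset to be $\{2,\ldots,k\}$, missing precisely the element $1$. I would therefore expect to need an involution argument (analogous to Lemmas~\ref{lem:W0}--\ref{lem:Z0}) killing all triples where two arcs can be swapped without affecting the subset-containment constraints, leaving a fixed-point set that is manifestly in sign-preserving bijection with $\mM_{\qq,r-1}$; verifying that the leftover sign is uniformly $+1$ and that the surjectivity of $f$ translates correctly into the $r-1$ count is the delicate accounting I anticipate will require the most care.
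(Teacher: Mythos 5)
Your overall strategy --- work with the triples $(\SS,f,\pi)\in\Om(M_\be)$, isolate the special ``$S_{f(i)}=[k]$'' entries by multilinearity as in \eqref{eq:decomposition-Na}, and cancel everything unwanted with sign-reversing involutions --- is the same as the paper's, but as written your plan has a missing step and a wrong key idea. The missing step: the events $J^{f(i)}_{i,i}$ (i.e.\ ``$S_{f(i)}=[k]$'') are \emph{not} the diagonal entries of $M_\be$; since $M_{\be,i,j}=J^{f(i)}_{i,j+1}$ they sit in positions $(i,i-1)$. The paper first replaces $M_\be$ by the column-rotated matrix $M_\be'$ with entries $J^{f(i)}_{i,j}$, picking up a factor $(-1)^{k-1}$, and this factor is precisely cancelled by the sign $(-1)^{k-1}$ that comes out of the base-case evaluation (Lemma~\ref{lem:Mkbe}); without this alignment the decomposition ``exactly as in \eqref{eq:decomposition-Na}'' cannot even be set up, and without the sign bookkeeping the final constant would be off by $(-1)^{k-1}$.

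The wrong key idea is your description of the surviving configurations. You predict fixed points in which every $S_t$ equals $[k]$ except one distinguished subset equal to $\{2,\ldots,k\}$, with every arc satisfying $\be(i,S_{f(i)})=k$. This is backwards: any configuration with some $S_t=[k]$ must cancel, and the surviving generalized event is ``all of $S_1,\ldots,S_r$ are \emph{proper} subsets and $S_{f(1)}=\{2,\ldots,k\}$''. That event is exactly what produces the stated answer, because deleting the subset $S_{f(1)}=\{2,\ldots,k\}$ from such a tuple leaves an element of $\mM_{\qq,r-1}$: the element $1$ keeps multiplicity $p_1$ while each $j\geq 2$ loses one occurrence, which is the shift $\qq=(p_1,p_2-1,\ldots,p_k-1)$. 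In the paper this identification does not come from a fixed-point analysis on $\Om(M_\be)$ at all, but from a direct two-permutation evaluation of the base matrix $M^{(1)}$, whose rows $2,\ldots,k-1$ are the diagonal entries $J^{f(i)}_{i,i}-I^{f(i)}_{i,i}$ (equivalent to minus the event ``$S_{f(i)}\neq[k]$''); the sign-reversing involutions (as in Lemmas~\ref{lem:W0}--\ref{lem:Z0}) are used only to show that the correction terms $\Pdet(N^{(a)})$ vanish. An involution whose unmatched triples were the ones you describe cannot exist --- their signed count is not $|\mM_{\qq,r-1}|\times\#\{\textrm{surjections}\}$ --- so the ``delicate accounting'' you postpone would fail at exactly that point.
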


Note that Proposition~\ref{prop:detMbe} together with Proposition~\ref{prop:second-det} prove the case $(b)$ of Theorem~\ref{thm:main}. We now sketch the proof of Proposition~\ref{prop:detMbe} which is very similar to the proof of Proposition~\ref{prop:detMal}. We first use some column and row operations on the matrix $M_\be$. Let $M_\be'=(M'_{\be,i,j})_{i,j\in[k]}$, where $M'_{\be,i,j}=J_{i,j}^{f(i)}$ if $i\neq k$ and $M'_{\be,k,j}=\Om$. Since the matrix $M_\be'$ is obtained from $M_\be$ by reordering its columns cyclically we get 
$$\Pdet(M_\be)=(-1)^{k-1}\Pdet(M_\be').$$

For $a\in[k-1]$, we denote by $M^{(a)}=(M^{(a)}_{i,j})_{i,j\in[k]}$ the matrix of generalized events represented in Figure \ref{fig:matrix-Mbeta}, that is,  $M^{(a)}_{i,j}=M_{\be,i,j}'\equiv J_{i,j}^{f(i)}$ if $i\leq a$, $M^{(a)}_{i,j}=0$ if $i\in\{a+1,\ldots,k-1\}$ and $j\neq i$, and $M^{(a)}_{i,i}=I_{i,i}^{f(i)}-J_{i,i}^{f(i)}$ if $i\in\{a+1,\ldots,k-1\}$, and $M^{(a)}_{k,j}=\Om$. Note that $M^{(k-1)}=M_\be'$. 
For $a\in\{2,3,\ldots, k-1\}$, we also denote $N^{(a)}=(N^{(a)}_{i,j})_{i,j\in[k]}$ the matrix of  events which is the same as $M^{(a)}$ except the entry in position $(a,a)$ is $I_{a,a}^{f(a)}$ (see Figure \ref{fig:matrix-Mbeta}).
\begin{figure}
$${\footnotesize \left(
\begin{array}{ccccccc}
J_{1,1}^{f(1)} & \cdots & J_{1,a}^{f(1)} & J_{1,a+1}^{f(1)} & \cdots&\cdots& J_{1,k}^{f(1)} \\ 
\vdots & & \vdots & \vdots &&& \vdots \\ 
J_{a-1,1}^{f(a-1)} & \cdots & J_{a-1,a}^{f(a-1)} & J_{a-1,a+1}^{f(a-1)} & \cdots &\cdots& J_{a-1,k}^{f(a-1)} \\
J_{a,1}^{f(a)} & \cdots & \textcolor{red}{\bX} & J_{a,a+1}^{f(a)} & \cdots &\cdots& J_{a,k}^{f(a)} \\
0 & \cdots & 0 & J^{f(a+1)}_{a+1,a+1} -I^{f(a+1)}_{a+1,a+1} & 0 & \cdots & 0\\
\vdots & & & &\ddots & & \vdots\\
0 & \cdots& \cdots & \cdots & 0 & J^{f(k-1)}_{k-1,k-1} -I^{f(k-1)}_{k-1,k-1} &0\\
\Om &\cdots &\Om &\Om &\cdots &\Om &\Om \\
\end{array}
\right)
}
$$
\caption{The matrix $M^{(a)}$ (resp. $N^{(a)}$) is the matrix represented above with the  entry $\bX$ in position $(a,a)$ equal to $\bX=J_{a,a}^{f(a)}$ (resp. $\bX=I_{a,a}^{f(a)}$).} \label{fig:matrix-Mbeta}
\end{figure}

By linearity of the $\Pr$-determinant with respect to matrix rows one gets for all $a\in\{2,3,\ldots, k-1\}$,
$$\Pdet(M^{(a)})=\Pdet(N^{(a)})+\Pdet(M^{(a-1)}).$$
Thus, using this relation iteratively starting with $\Pdet(M_\be')=\Pdet(M^{(k-1)})$ we get
\begin{equation}\label{eq:decompositionMbe}
\Pdet(M_\be')=\Pdet(M^{(k-1)})=\Pdet(M^{(1)})+\sum_{a=2}^{k-1}\Pdet(N^{(a)}).
\end{equation}
We will now prove that $\ds \Pdet(M^{(1)})=(-1)^{k-1}\frac{|\mM_{\qq,r-1}|}{|\mS_{\pp,r}|}$, and $\Pdet(N^{(a)})=0$ for all $a\in\{2,3,\ldots, k-1\}$.

\begin{lemma}\label{lem:Mkbe}
The $\Pr$-determinant of the matrix $M^{(1)}$ appearing in~\eqref{eq:decompositionMbe} is
$\ds \Pdet(M^{(1)})=(-1)^{k-1}\frac{|\mM_{\qq,r-1}|}{|\mS_{\pp,r}|}$.
\end{lemma}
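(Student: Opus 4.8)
The plan is to mimic the proof of Lemma~\ref{lem:Mkal} almost line for line. Since every entry of the last row of $M^{(1)}$ equals $\Om$, I would first drop that row and write
$$\Pdet(M^{(1)})=\sum_{\pi\in\fS_k}\eps(\pi)\,\Pr\Big(\bigcap_{i=1}^{k-1}M^{(1)}_{i,\pi(i)}\Big).$$
The rows $2,3,\ldots,k-1$ of $M^{(1)}$ are diagonal (every off-diagonal entry is the zero generalized event), so a permutation $\pi$ contributes only if it fixes each of $2,\ldots,k-1$; the sole survivors are $\pi=\Id_k$ and the transposition $\pi=(1,k)$. This is exactly the reduction to $\{\Id_k,(k-1,k)\}$ used in Lemma~\ref{lem:Mkal}, with the roles of the first and last indices interchanged.

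Writing $Q=\bigcap_{i=2}^{k-1}\big(J_{i,i}^{f(i)}-I_{i,i}^{f(i)}\big)$ for the common contribution of the diagonal rows (Figure~\ref{fig:matrix-Mbeta}), the two surviving terms combine into
$$\Pdet(M^{(1)})=\Pr\Big(\big(J_{1,1}^{f(1)}-J_{1,k}^{f(1)}\big)\cap Q\Big).$$
Next I would simplify the generalized events using $\sim$. Each diagonal factor satisfies $J_{i,i}^{f(i)}-I_{i,i}^{f(i)}\sim-\{S_{f(i)}\neq[k]\}$, since $I_{i,i}^{f(i)}=\Om$ and $J_{i,i}^{f(i)}=\{S_{f(i)}=[k]\}$; this contributes a sign $(-1)^{k-2}$. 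In the remaining factor, $J_{1,1}^{f(1)}=\{S_{f(1)}=[k]\}$ while $J_{1,k}^{f(1)}=I_{1,k}^{f(1)}=\{\,]1,k]\subseteq S_{f(1)}\}=\{\{2,\ldots,k\}\subseteq S_{f(1)}\}$, and since the former is contained in the latter, $J_{1,1}^{f(1)}-J_{1,k}^{f(1)}\sim-\{S_{f(1)}=\{2,\ldots,k\}\}$, contributing one more sign. Collecting the signs yields $\Pdet(M^{(1)})=(-1)^{k-1}\Pr(E)$, where $E$ is the event that $S_{f(1)}=\{2,\ldots,k\}$ and $S_{f(i)}\neq[k]$ for all $i\in\{2,\ldots,k-1\}$.

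It remains to evaluate $\Pr(E)$, again following Lemma~\ref{lem:Mkal}. Because $S_{f(1)}=\{2,\ldots,k\}$ is already a proper subset, the event $E$ forces $S_{f(i)}\neq[k]$ for every $i\in[k-1]$; as $f$ is surjective this says precisely that $\SS\in\mM_{\pp,r}$, so $E=\{\SS\in\mM_{\pp,r}\}\cap\{S_{f(1)}=\{2,\ldots,k\}\}$. Conditioning on $\{\SS\in\mM_{\pp,r}\}$ and using that $f(1)$ is uniform on $[r]$ and independent of $\SS$, together with the invariance of $\mM_{\pp,r}$ under relabeling the tuple indices $1,\ldots,r$, I obtain
$$\Pr(E)=\frac{|\mM_{\pp,r}|}{|\mS_{\pp,r}|}\cdot\Pr\big(S_1=\{2,\ldots,k\}\mid\SS\in\mM_{\pp,r}\big)=\frac{|\mM_{\pp,r}|}{|\mS_{\pp,r}|}\cdot\frac{|\mM_{\qq,r-1}|}{|\mM_{\pp,r}|}=\frac{|\mM_{\qq,r-1}|}{|\mS_{\pp,r}|}.$$
Here the identity $|\{\SS\in\mM_{\pp,r}:S_1=\{2,\ldots,k\}\}|=|\mM_{\qq,r-1}|$ holds because fixing $S_1=\{2,\ldots,k\}$ removes one occurrence from each of $2,\ldots,k$ and none from $1$, leaving exactly the tuples $(S_2,\ldots,S_r)$ counted by $\mM_{\qq,r-1}$. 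Multiplying by $(-1)^{k-1}$ gives the claim. The computation is essentially routine once the $\al$ template is in hand; the one point that genuinely needs care is the sign bookkeeping. Unlike the case of Lemma~\ref{lem:Mkal}, the diagonal entries here are $J_{i,i}^{f(i)}-I_{i,i}^{f(i)}$ rather than $I_{i,i}^{f(i)}-J_{i,i}^{f(i)}$, so each contributes a $-1$, and moreover $J_{1,1}^{f(1)}$ is the \emph{smaller} event $\{S_{f(1)}=[k]\}$ nested inside $J_{1,k}^{f(1)}$; it is the combination of these two facts that produces the prefactor $(-1)^{k-1}$ rather than $+1$.
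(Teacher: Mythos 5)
Your proof is correct and follows essentially the same route as the paper's: reduce to the two permutations $\Id_k$ and $(1,k)$, combine them into $\Pr\bigl(\bigcap_{i=2}^{k-1}(J_{i,i}^{f(i)}-I_{i,i}^{f(i)})\cap(J_{1,1}^{f(1)}-J_{1,k}^{f(1)})\bigr)$, extract the sign $(-1)^{k-1}$, identify the remaining generalized event with ``$S_{f(1)}=\{2,\ldots,k\}$ and $\SS\in\mM_{\pp,r}$,'' and evaluate its probability as $|\mM_{\qq,r-1}|/|\mS_{\pp,r}|$. Your sign bookkeeping also correctly adopts the convention of Figure~\ref{fig:matrix-Mbeta} (diagonal entries $J_{i,i}^{f(i)}-I_{i,i}^{f(i)}$), which is what the paper's own computation uses, so nothing further is needed.
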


\begin{proof} This proof is similar to the one of Lemma~\ref{lem:Mkal}. 
One easily gets 
\begin{eqnarray*}
\Pdet(M^{(1)})&=&\Pr\left(\bigcap_{i=2}^{k-1}\left(J_{i,i}^{f(i)}-I_{i,i}^{f(i)}\right)\cap \left(J_{1,1}^{f(1)}-J_{1,k}^{f(1)}\right)\right)\\
&=&(-1)^{k-1}\Pr\left(\bigcap_{i=2}^{k-1}\left(I_{i,i}^{f(i)}-J_{i,i}^{f(i)}\right)\cap \left(J_{1,k}^{f(1)}-J_{1,1}^{f(1)}\right)\right).
\end{eqnarray*}
Moreover, the generalized event inside the argument of $\Pr(\cdot)$ is clearly equivalent to the event ``$S_{f(1)}=\{2,3,\ldots,k\}$ and for all $i\in\{2,\ldots,k-2\}$, $S_{f(i)}$ is a proper subset of $[k]$''. This event is the same as ``$S_{f(1)}=\{2,3,\ldots,k\}$ and $\SS\in \mM_{\pp,r}$'' which has probability 
$$\Pr(\SS \in \mM_{\pp,r})\times \Pr(S_{f(1)}=\{2,3,\ldots,k\} \mid \SS\in \mM_{\pp,r})=\frac{|\mM_{\pp,r}|}{|\mS_{\pp,r}|}\times \frac{|\mM_{\qq,r-1}|}{|\mM_{\pp,r}|}=\frac{|\mM_{\qq,r-1}|}{|\mS_{\pp,r}|}.$$ 
\end{proof}

It remains to prove that $\Pdet(N^{(a)})=0$ for all $a\in \{2,3,\ldots, k-1\}$. The proof is very similar to the one presented in Section~\ref{sec:detMal} (indeed we point out that the rows $a-1$ and $a$ of $N^{(a)}$ are identical to the ones in Section~\ref{sec:detMal}). For $D\subseteq \{a+1,\ldots,k-1\}$, we denote by $N^{a,D}=(N^{a,D}_{i,j})$ the matrix of generalized events defined by $N^{a,D}_{i,j}=N^{(a)}_{i,j}$ if $i\leq a$ or $j\neq i$, and $N^{a,D}_{i,i}=I_{i,i}^{f(i)}$ if $i\in D$, and $N^{a,D}_{i,i}=J_{i,i}^{f(i)}$ if $i\in \{a+1,\ldots,k-1\}\setminus D$.
By linearity of the $\Pr$-determinant, one gets 
$$\Pdet(N^{(a)})=\sum_{D\subseteq \{a+1,\ldots,k-1\}}(-1)^{|D|}\Pdet(N^{a,D}).$$

We now fix $a\in \{2,3,\ldots, k-1\}$ and $D\subseteq \{a+1,\ldots,k-1\}$ and proceed to prove that $\Pdet(N^{a,D})=0$.
We define $\fS_{k}^a$ as the set of permutations $\pi$ of $[k]$ such that $\pi(i)=i$ for all $i>a$. We also define some subsets $K_{i,j}$ of $[k]$ by setting $K_{i,i}=[k]$ if $i\in D\cup \{a-1\}$, $K_{i,i}=\emptyset$ if $i\in \{a,a+1,\ldots,k-1\}\setminus D$, and $K_{i,j}=\,\,]i,j]$ for $i\in \{1,2,\ldots,a\}$, 
and $j\in[k]\setminus \{i\}$. By definition, a triple $(\SS,f,\pi)\in\Om_k$ is in $\Om(N^{a,D})$, if and only if the permutation $\pi$ is in $\fS_{k}^a$, and for all $i\in[k-1]$ the subset $K_{i,\pi(i)}$ is contained in $S_{f(i)}$. 
Lastly for $j\in[k-1]$ we denote 
$$\ds H_{f,\pi,j}=\bigcup_{i\in[k-1]\setminus \{j\},~f(i)=f(j)}K_{i,\pi(i)}.$$ 
We now define a partition of the set $\Om(N^{a,D})$, by declaring that a triple $(\SS,f,\pi)\in\Om(N^{a,D})$ is in 
\begin{itemize}
\item $\mW$ if $\ds f(a-1)=f(a)$, 
\item $\mX$ if $\ds f(a-1)\neq f(a)$, $\ds a\notin H_{f,\pi,a-1}$, and $\ds a\notin H_{f,\pi,a}$, 
\item $\mY$ if $\ds f(a-1)\neq f(a)$, $\ds a\notin H_{f,\pi,a-1}$, and $\ds a\in H_{f,\pi,a}$, 
\item $\mZ$ if $\ds f(a-1)\neq f(a)$, $\ds a\in H_{f,\pi,a-1}$.
\end{itemize}
Since $\Om(N^{a,D})=\mW\uplus \mX\uplus \mY\uplus \mZ$, we get $\Pdet(N^{a,D})=\frac{W+X+Y+Z}{|\Om|}$, where 
$$W=\!\sum_{(\SS,f,\pi)\in \mW}\!\eps(\pi),~~ X=\!\sum_{(\SS,f,\pi)\in \mX}\!\eps(\pi),~~ Y=\!\sum_{(\SS,f,\pi)\in \mY}\!\eps(\pi),~ \textrm{ and } ~ Z=\!\sum_{(\SS,f,\pi)\in \mZ}\!\eps(\pi).$$
We then show that $W=X=Y=Z=0$.\\ 

The proof that $W=0$ is identical to the proof of Lemma~\ref{lem:W0}. The proof that $X=0$ is identical to the proof of Lemma~\ref{lem:X0}. It is done by defining a sign reversing involution $\phi$ on $\mX$. Explicitly, the mapping $\phi$ is defined on $\mX$ by setting $\phi(\SS,f,\pi)=(\SS',f',\pi')$, where $f'=f\circ (a-1,a)$, and $\pi'=\pi\circ (a-1,a)$, and $\SS'$ is obtained from $\SS$ simply by exchanging the presence or absence of the integer $a$ between the subsets $S_{f(a-1)}$ and $S_{f(a)}$. The proof that $\phi$ is an involution on $\mX$ is identical to the one given in the proof of Lemma~\ref{lem:X0}. 

The proof that $Y=0$ is identical to the proof of Lemma~\ref{lem:Y0}. It is done by defining a sign reversing involution $\phi$ on $\mY$. Explicitly, one considers the partition $\mY=\mY_1 \uplus \mY_2 \uplus \mY_3$ defined exactly as in the proof of Lemma~\ref{lem:Y0}.
One then defines a mapping $\phi$ on $\mY$ by setting $\phi(\SS,f,\pi)=(\SS,f,\pi')$, where $\pi'=\pi\circ (a,k)$ (resp. $\pi'=\pi\circ (b,k)$, $\pi'=\pi\circ (a,b)$) if $(\SS,f,\pi)$ is in $\mY_1$ (resp. $\mY_2$, $\mY_3$),
where $b$ is the greatest integer in $[k-1]\setminus \{a\}$ such that $f(b)=f(a)$ and $a\in K_{b,\pi(b)}$. The fact that $\phi$ is an involution on $\mY$ (actually, on each of the subsets $\mY_1$, $\mY_2$, and $\mY_3$) is identical to the one given in the proof of Lemma~\ref{lem:Y0}. 

The proof that $Z=0$ is again identical to the proof of Lemma~\ref{lem:Z0}. This completes the proof that $\Pdet(N^{a,D})=0$ for all $a\in \{2,3,\ldots, k-1\}$ and all $D\subseteq \{a+1,\ldots,k-1\}$. This together with Lemma~\ref{lem:Mkbe} complete the proof of Proposition~\ref{prop:detMbe}, hence of the case~$(b)$ of Theorem~\ref{thm:main}.

\subsection{Computing the $\Pr$-determinant of the matrix $M_\ga$.}\label{sec:detMde}
In this section we compute the $\Pr$-determinant of the matrix $M_\ga$.
\begin{lemma}\label{lem:third-det-delta}
The $\Pr$-determinant of the matrix $M_\ga$ defined in Proposition~\ref{prop:second-det} is given by
$$\Pdet(M_\ga)=\sum_{a=1}^{k}\Pdet(Q^{(a)}),$$
where $Q^{(a)}=(Q_{i,j}^{(a)})_{i,j\in[k]}$ is the matrix of generalized events defined by
$\ds Q_{i,j}^{(a)}= J_{i,j+1}^{f(i)}$ if $i\in[a-1]$, $Q_{i,j}^{(a)}= J_{i,j+1}^{f(i-1)}$ if $i\in \{a+1,\ldots,k\}$, and $Q^{(a)}_{a,j}=\Om$. 
\end{lemma}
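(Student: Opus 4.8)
The plan is to reduce $M_\ga$ to matrices of \emph{genuine} events by multilinearity, to kill most of them by a relabeling symmetry, and to match the survivors to the $Q^{(a)}$ with a careful sign count. First I would use that $\Pdet$ is multilinear and alternating in the rows (the same properties of $\det$ over the commutative ring $R$ that are already exploited in Sections~\ref{sec:determinantal-expressions}--\ref{sec:computing-determinant}) to expand the difference in each of the first $k-1$ rows of $M_\ga$. Recording for each $i\in[k-1]$ the choice $c_i=i$ (keep $J^{f(i)}_{i,j+1}$) or $c_i=i-1$ (keep $-J^{f(i)}_{i-1,j+1}$), this gives
$$\Pdet(M_\ga)=\sum_{S\subseteq[k-1]}(-1)^{|S|}\Pdet(M^S),$$
where $M^S$ has genuine-event entries $M^S_{i,j}=J^{f(i)}_{c_i,j+1}$ (with $c_i=i-1$ iff $i\in S$) for $i\in[k-1]$, and $M^S_{k,j}=\Om$.

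The engine of the argument is the following relabeling symmetry: for any permutation $\tau$ of $[k-1]$, the map $(\SS,f)\mapsto(\SS,f\circ\tau)$ is a $\Pr$-preserving bijection of $\Om$. Reindexing the sum in~\eqref{eq:expand-Pdet} by $f\mapsto f\circ\tau^{-1}$ then shows that $\Pdet(M^S)$ is unchanged when the superscript of each row $i$ is relabeled from $f(i)$ to $f(\tau(i))$, the first indices $c_i$ being left untouched. Now the integers $c_1,\dots,c_{k-1}$ are pairwise distinct precisely when $S$ is an initial segment $\{1,\dots,m\}$ (a collision $c_i=c_{i'}$ with $i<i'$ forces $i'=i+1$, $i\notin S$, $i+1\in S$). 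If $S$ is \emph{not} an initial segment, then two rows $i<i'$ satisfy $c_i=c_{i'}$, so they literally share the same shape $J^{\,\cdot}_{c_i,\,\cdot+1}$; applying $\tau=(i\,i')$ merely swaps these two rows, and the alternating property forces $\Pdet(M^S)=-\Pdet(M^S)=0$.

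It remains to match the surviving terms. For $S=\{1,\dots,a\}$ with $a\in[k-1]$ (and $S=\emptyset$ for $a=k$) the first indices are exactly $[k]\setminus\{a\}$, so I would sort the rows of $M^S$ by first index: the permutation $\mu$ carrying row $i$ to the slot with first index $c_i$ and the $\Om$-row $k$ to slot $a$ is the $(a+1)$-cycle $(1,k,a,a-1,\dots,2)$, of sign $\eps(\mu)=(-1)^a$. Letting $\sigma_a\colon[k]\setminus\{a\}\to[k-1]$ be the increasing bijection that encodes the superscript pattern of $Q^{(a)}$ (namely $f(i)$ for $i<a$ and $f(i-1)$ for $i>a$), the single relabeling $\tau_a=\sigma_a\circ\mu$ (restricted to $[k-1]$) makes all superscripts agree with $Q^{(a)}$ after the sort. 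Hence the relabeling symmetry followed by the row sort gives $\Pdet(M^S)=\eps(\mu)\,\Pdet(Q^{(a)})=(-1)^a\Pdet(Q^{(a)})$. Since $|S|=a$, the expansion sign $(-1)^{|S|}$ cancels $\eps(\mu)$ exactly, so $(-1)^{|S|}\Pdet(M^S)=\Pdet(Q^{(a)})$; the case $a=k$ is immediate because $M^{\emptyset}=Q^{(k)}=M_\be$. Summing over the surviving $S$ yields $\Pdet(M_\ga)=\sum_{a=1}^{k}\Pdet(Q^{(a)})$.

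The main obstacle is the bookkeeping in this last step: one must verify that a \emph{single} relabeling $\tau_a$ of $[k-1]$ simultaneously aligns every superscript after the sorting permutation $\mu$ (this is why the shifted pattern $f(i-1)$ appears in the lower rows of $Q^{(a)}$), and that the two signs $(-1)^{|S|}$ and $\eps(\mu)$ cancel for each $a$. Everything else---the multilinear expansion, the $\Pr$-preserving symmetry $(\SS,f)\mapsto(\SS,f\circ\tau)$, and the vanishing of the non-initial-segment terms---is routine once that symmetry is in hand, and no new sign-reversing involution of the type used in Lemmas~\ref{lem:W0}--\ref{lem:Z0} is needed here.
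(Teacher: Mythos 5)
Your proposal is correct and follows essentially the same route as the paper: the same multilinear expansion over subsets of $[k-1]$, the same vanishing of non-initial-segment terms (your relabeling $(\SS,f)\mapsto(\SS,f\circ\tau)$ combined with the alternating property is exactly the paper's sign-reversing involution $(\SS,f,\pi)\mapsto(\SS,f\circ(a-1,a),\pi\circ(a-1,a))$ in different packaging), and the same row-sorting with the sign $(-1)^{a}$ cancelling $(-1)^{|S|}$. Your explicit treatment of the superscript relabeling needed to match the sorted matrix to $Q^{(a)}$ is a point the paper leaves implicit (it invokes the uniformity of $f$ only later, in the proof of Corollary~\ref{cor:detMde}), so no gap there.
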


Note that $Q^{(k)}=M_{\be}$, where $M_\be$ is the matrix defined in Proposition~\ref{prop:second-det}.

\begin{cor}\label{cor:detMde}
The $\Pr$-determinant of the matrix $M_\ga$ defined in Proposition~\ref{prop:second-det} is 
$$\Pdet(M_\ga)=\sum_{a=1}^k\frac{|\mM_{\qq^{(a)},r-1}|}{|\mS_{\pp,r}|},$$
where $\qq^{(a)}=(q_1^{(a)},\ldots,q_k^{(a)})$ and for all $i\in[k]$, $q_i^{(a)}=p_{i}-1$ if $i\neq j$ and $q_a^{(a)}=p_{a}$.
\end{cor}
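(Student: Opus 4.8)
The plan is to feed Lemma~\ref{lem:third-det-delta} into the computation already performed for $M_\be$ in Proposition~\ref{prop:detMbe}. Lemma~\ref{lem:third-det-delta} reduces the corollary to evaluating each $\Pdet(Q^{(a)})$, and the key observation is that every $Q^{(a)}$ has exactly the same shape as $M_\be=Q^{(k)}$: it carries a single all-$\Om$ row, now sitting in position $a$ instead of $k$, while every other row $i$ has entries $J_{i,j+1}^{g(i)}$ whose first subscript is the row index $i$ and whose superscript $g(i)$ (namely $f(i)$ for $i<a$ and $f(i-1)$ for $i>a$) realizes a surjection from $[k]\setminus\{a\}$ onto $[r]$. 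Thus $Q^{(a)}$ is a cyclically shifted copy of $M_\be$, and I would exploit this rather than rerun the involution machinery.

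Concretely, I would first record that the whole construction is covariant under the cyclic relabeling $\tau_c\colon v\mapsto v+c \pmod k$ of the ground set $[k]$. This relabeling, acting on $\SS$ together with the induced reindexing of $f$, is a measure-preserving bijection from the probability space attached to $\pp$ onto the one attached to $\tau_c(\pp):=(p_{j-c})_{j\in[k]}$ (the numbers of surjections agree, and $|\mS_{\pp,r}|$ is invariant under permuting $\pp$), and it sends the event $J_{i,j}^t$ to $J_{i+c,j+c}^t$. Taking $c=k-a$ and conjugating the summation permutation by the shift $i\mapsto i+c$, one checks that $\Pdet(Q^{(a)})$ at parameter $\pp$ equals $\Pdet(M_\be)$ at parameter $\tau_{k-a}(\pp)$; since the reindexing is a conjugation, the sign of each permutation is preserved and no extra factor appears.

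It then remains to translate the value supplied by Proposition~\ref{prop:detMbe} back into the stated form. Here I would use that both $|\mS_{\pp,r}|=\prod_{j=1}^k\binom{r}{p_j}$ and $|\mM_{\pp,r}|$ (the coefficient of $\prod_j x_j^{p_j}$ in the symmetric polynomial $(\prod_i(x_i+1)-\prod_i x_i)^r$) are symmetric functions of $\pp$. Tracking the shift shows that the distinguished subset isolated by Proposition~\ref{prop:detMbe} is $[k]\setminus\{a+1\}$, the relevant cyclic interval being $]a+1,a]=[k]\setminus\{a+1\}$; comparing multisets of exponents then gives $\Pdet(Q^{(a)})=|\mM_{\qq^{(a+1)},r-1}|/|\mS_{\pp,r}|$, with indices read modulo $k$ (consistently, $a=k$ returns $\qq^{(1)}=\qq$ and recovers Proposition~\ref{prop:detMbe}). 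Summing over $a\in[k]$ and reindexing by the bijection $a\mapsto a+1\pmod k$ produces exactly $\sum_{b=1}^k|\mM_{\qq^{(b)},r-1}|/|\mS_{\pp,r}|$.

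The main obstacle I anticipate is purely the index bookkeeping: making the cyclic covariance precise, verifying that the conjugation leaves the signs untouched, and pinning down that the missing element is $a+1$ rather than $a$. A more self-contained alternative avoids the relabeling and instead reruns the proof of Proposition~\ref{prop:detMbe} on each $Q^{(a)}$ directly: peel all non-$\Om$ rows but one into diagonal form by row-multilinearity, isolate a main diagonal term (treated as in Lemma~\ref{lem:Mkbe}) plus correction terms that vanish through the sign-reversing involutions of Lemmas~\ref{lem:W0}--\ref{lem:Z0}. That route goes through verbatim, since those involutions are local to two adjacent rows and to the surjection $f$ and are insensitive to the global position of the $\Om$ row; its only delicate point is again confirming that the isolated subset is $[k]\setminus\{a+1\}$.
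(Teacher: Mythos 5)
Your proposal is correct and follows essentially the same route as the paper: feed Lemma~\ref{lem:third-det-delta} into Proposition~\ref{prop:detMbe} by observing that each $Q^{(a)}$ is a cyclic shift of $M_\be$, where the paper implements your ``cyclic covariance'' in three explicit steps (conjugating rows and columns by the shift, reindexing the uniformly random surjection $f$, and relabeling the ground set so that $p_i$ becomes $p_{i+a}$), arriving at the same identification $\Pdet(Q^{(a)})=|\mM_{\qq^{(a+1)},r-1}|/|\mS_{\pp,r}|$ with indices modulo $k$. Your index bookkeeping, including that the distinguished subset is $[k]\setminus\{a+1\}$ and the final reindexing $a\mapsto a+1$, matches the paper's computation.
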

 
\begin{proof}[Proof of Corollary~\ref{cor:detMde}]
By Proposition~\ref{prop:detMbe}, 
$$ \Pdet(Q^{(k)})\equiv \Pdet(M_\be)=\frac{|\mM_{\qq^{(1)},r-1}|}{|\mS_{\pp,r}|}.$$ 
Now for $a\in[k-1]$, permuting cyclically $k-a$ times both the rows and the columns of the matrix $Q^{(a)}$  gives
$$\Pdet(Q^{(a)})=\Pdet(R^{(a)}),$$
where $R^{(a)}=(R_{i,j}^{(a)})_{i,j\in[k]}$ is the matrix defined by $R_{i,j}^{(a)}=J_{i+a,j+a+1}^{f(i+a-1)}$ for $i\in [k-a]$, $R_{i,j}=J_{i+a,j+a+1}^{f(i+a-k)}$ for $i\in \{k-a+1,\ldots,k-1\}$, and $R^{(a)}_{k,j}=\Om$. Since the surjection $f$ is uniformly random we get, 
$$\Pdet(R^{(a)})=\Pdet(T^{(a)}),$$ 
where  $T^{(a)}=(T_{i,j}^{(a)})_{i,j\in[k]}$ is the matrix defined by $T_{i,j}^{(a)}=J_{i+a,j+a+1}^{f(i)}$ for $i\in [k-1]$ and $T^{(a)}_{k,j}=\Om$. 
Now, by symmetry (obtained by replacing the integer $i$ by $i+a$ in the subsets $S_1,\ldots,S_r$), the $\Pr$-determinant  $\Pdet(T^{(a)})$ is obtained from  $\Pdet(M_\be)$ by replacing $p_i$ by $p_{i+a}$ for all $i\in [k]$, that is, 
$$\Pdet(T^{(a)})=\frac{|\mM_{\qq^{(a+1)},r-1}|}{|\mS_{\pp,r}|}.$$
This together with Lemma~\ref{lem:third-det-delta} completes the proof of Corollary~\ref{cor:detMde}.
\end{proof}

Note that Corollary~\ref{cor:detMde} together with Proposition~\ref{prop:second-det} prove the case $(c)$ of Theorem~\ref{thm:main}. It now only remains to prove Lemma~\ref{lem:third-det-delta}. 
For $D\subseteq [k-1]$ and $j\in [k]$, we denote $M^D=\l(M_{i,j}^D\r)_{i,j\in [k]}$ the matrix of events defined by $M_{i,j}^D=J_{i-1,j+1}^{f(i)}$ if $i\in D$, $M_{i,j}^D=J_{i,j+1}^{f(i)}$ if $i\in[k-1]\setminus D$, and $M_{k,j}^D=\Om$. By the multilinearity of $\Pr$-determinants with respect to the rows,
$$\Pdet(M_\ga)=\sum_{D\subseteq [k-1]}(-1)^{|D|}\Pdet\l(M^D\r).$$
We now show that only $k$ of the subsets $D$ contribute to the above sum.\\

\noindent\textbf{Claim.} If $D\subseteq[k-1]$ contains an integer $a>1$ but not $a-1$, then $\Pdet\l(M^D\r)=0$.\\

\noindent\textbf{Proof of the claim.} We suppose that the subset $D$ contains the integer $a>1$ but not $a-1$. We remark that the $(a-1)$th and $a$th rows of the matrix $M^{D}$ are almost identical: indeed for all $j\in[k]$, $M_{a-1,j}^D=J_{a-1,j+1}^{f(a-1)}$ while $M_{a,j}^D=J_{a-1,j+1}^{f(a)}$. We now show that this implies $\Pdet(M^{D})=0$. We define a mapping $\phi$ on $\Om(M^{D})$ by setting $\phi(\SS,f,\pi)=(\SS,f\circ (a-1,a),\pi\circ (a-1,a))$. We want to prove that $\phi$ is an involution on $\Om(M^{D})$. Let $(\SS,f,\pi)\in\Om_k$. It is clear that $f\circ (a,a-1)$ is a surjection from $[k-1]$ to $[r]$ so that $\phi(\SS,f,\pi)$ is in $\Om_k$. Moreover, it is easy to see from the above remark about the $(a-1)$th and $a$th rows of $M^{D}$ that $\phi(\SS,f,\pi)$ is in $\Om(M^{D})$ (indeed the triple $\phi(\SS,f,\pi)$ satisfies the condition $M_{a-1,\pi(a-1)}^D$ because $(\SS,f,\pi)$ satisfies $M_{a,\pi(a)}^D$, and $\phi(\SS,f,\pi)$ satisfies $M_{a,\pi(a)}^D$ because $(\SS,f,\pi)$ satisfies $M_{a-1,\pi(a-1)}^D$). Thus the mapping $\phi$ is an involution of $\Om(M^{D})$. Moreover, since the involution $\phi$ changes the sign of the permutation $\pi$ we get
$$\Pdet(M^{D})=\frac{1}{|\Om|}\sum_{(\SS,f,\pi)\in \Om(M^{D})}\eps(\pi)=0,$$
as claimed.\\

So far we have proved that 
$$\Pdet(M_\ga)=\Pdet(M^{\emptyset})+\sum_{a=1}^{k-1}(-1)^a\, \Pdet(M^{[a]}).$$
Moreover, by definition $M^{\emptyset}=Q^{(k)}$, and it is easy to see, by reordering the rows of $M^{[a]}$, that for all $a\in [k-1]$ we have $\Pdet\l(M^{[a]}\r)=(-1)^a\,\Pdet(Q^{(a)})$. 
This completes the proof of Lemma~\ref{lem:third-det-delta}, hence of the case $(c)$ of Theorem~\ref{thm:main}.


\section{Open questions and conjectures} \label{sec:conj-gamma}
This paper leaves open a few questions. 
First of all, the proof we obtained of Theorem~\ref{thm:main} is not as elegant as we had hoped for, hence the following question:
\begin{Question}
Is there a more direct proof of Theorem~\ref{thm:main}? In particular, can one prove Theorem~\ref{thm:main} without using the matrix-tree theorem?
\end{Question}
Second, our use of the matrix-tree theorem (for which a ``forest version'' exists) suggests the following question:
\begin{Question}
For $\ze\in\{\al,\be,\de\}$, is there a simple expression for the probability that the random graph $G_\ze(\SS,\vp)$ is a \emph{pseudo-forest}, that is, a digraph in which the only cycles are loops (so that the connected components are trees oriented toward a vertex which has a loop)? 
\end{Question}
More importantly, Theorem~\ref{thm:main} gives simple formulas for the probability that certain random graphs are trees. We stumbled over one of these formulas while studying a factorization problem in the symmetric group \cite{OB-AM:constellations}, but it is unclear how general this phenomenon is.
\begin{Question}
Is there a more general theory encompassing Theorem~\ref{thm:main}?
\end{Question}

We conclude with a few conjectures about a random digraph $G_\de$ defined analogously to $G_\al$, $G_\be$, and $G_\ga$.
Let $\de$ be the mapping taking as argument an integer $i\in [k]$ and a subset $S\subseteq [k]$ and  defined by $\de(i,S)=i$ if $i\in S$, and otherwise $\de(i,S)$ is the integer $j\in[k]\setminus S$ such that $]i,j-1]\subseteq S$. The mapping $\de$ is represented in Figure~\ref{fig:delta}.
\fig{width=5.5cm}{delta}{Rule $\de$ for creating an arc of the complete graph $K_k$.} 

Given a tuple $\SS = (S_1,\ldots,S_r)$ in $\mS_{\pp,r}$ and a surjection $f$ from $[k-1]$ to $[r]$, we define  $G_{\de}(\SS,f)$ 
 to be the digraph with vertex set $[k]$ and arc set $A=\{a_1,\ldots,a_{k-1}\}$ where $a_i=(i,\de(i,S_{\vp(i)}))$ for all $i\in[k-1]$.
The following conjecture suggests there is a simple expression for the probability that $G_{\de}(\SS,f)$ is a tree.
\begin{conjecture} \label{conj:casega}
Let $k$ and $r$ be positive integers such that $r<k$, and let $\pp=(p_1,\ldots,p_k)$ be a tuple of non-negative integers.
Let $\SS=(S_1,\ldots,S_r)$ be a uniformly random element of $\mM_{\pp,r}$ (supposing that this set is non-empty), and let $\vp$ be a uniformly random surjection from $[k-1]$ to $[r]$ independent from~$\SS$. 
 Then the probability $P_\de(\pp,r)$ that the random digraph $G_\de(\SS,\vp)$ is a tree is
$$ P_\de(\pp,r)= \frac{|\mM_{\pp,r-1}|}{|\mM_{\pp,r}|}.$$ 
This is equal to the probability that $S_1=\emptyset$.
\end{conjecture}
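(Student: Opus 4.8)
The plan is to follow the proof of Theorem~\ref{thm:main} as closely as possible. Working in the same probability space $(\Om,\mA,\Pr)$ of Section~\ref{sec:determinantal-expressions}, I would first record the structural fact that drives everything: $\de(i,S)=\al(i,S)$ whenever $i\notin S$, while $\de(i,S)=i$ when $i\in S$. Hence for $j\neq i$ the arc event factors as $E_{\de,i,j}=E_{\al,i,j}\cap\{i\notin S_{f(i)}\}$, and $G_\de(\SS,\vp)$ is a tree if and only if $G_\al(\SS,\vp)$ is a tree and no vertex $i\in[k-1]$ satisfies $i\in S_{f(i)}$. By Proposition~\ref{prop:matrix-tree} this yields $P_\de(\pp,r)=\frac{|\mS_{\pp,r}|}{|\mM_{\pp,r}|}\Pdet(L_\de)$, where $L_\de$ is the reduced Laplacian of $E_\de=(E_{\de,i,j})$, so the target reduces to showing $\Pdet(L_\de)=|\mM_{\pp,r-1}|/|\mS_{\pp,r}|$.

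The off-diagonal entries behave exactly as in the case $\ga$, with the family $J$ replaced by $I$: a direct check gives, for $i\neq j$,
\[ L_{\de,i,j}\;\sim\; I_{i,j}^{f(i)}-I_{i-1,j}^{f(i)}-I_{i,j-1}^{f(i)}+I_{i-1,j-1}^{f(i)}, \]
the double difference encoding that \emph{both} $i$ and $j$ are gaps of $S_{f(i)}$. One is then tempted to run the machinery of Sections~\ref{sec:determinantal-expressions}--\ref{sec:computing-determinant} verbatim: telescope in the columns, peel off a clean matrix $M_\de$ with $M_{\de,i,j}=I_{i,j}^{f(i)}-I_{i-1,j}^{f(i)}$ for $i\in[k-1]$ and $M_{\de,k,j}=\Om$ (the analogue of Proposition~\ref{prop:second-det}), and finally expand $\Pdet(M_\de)=\sum_{a=1}^k\Pdet(Q^{(a)})$ into $\al$-type determinants exactly as in Lemma~\ref{lem:third-det-delta}.

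The main obstacle is that this reduction is \emph{not} valid for $\de$. For $\al,\be,\ga$ the telescoping works because the diagonal Laplacian entry coincides with the corresponding double-difference formula; concretely the loop event for $\al$ is the clean event $\{[k]\setminus\{i\}\subseteq S_{f(i)}\}=I_{i,i-1}^{f(i)}$. For $\de$ the loop event is genuinely larger, namely $E_{\de,i,i}=\{i\in S_{f(i)}\}\cup\{S_{f(i)}=[k]\setminus\{i\}\}$, and the naive diagonal $I_{i,i}^{f(i)}-I_{i-1,i}^{f(i)}-I_{i,i-1}^{f(i)}+I_{i-1,i-1}^{f(i)}$ exceeds the true diagonal $\Om-E_{\de,i,i}$ by \emph{precisely} $\{S_{f(i)}\neq[k]\}$. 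Carrying out the clean computation regardless over-counts: it returns $\Pdet(M_\de)=\frac{|\mM_{\pp,r}|}{|\mS_{\pp,r}|}\sum_{a=1}^k(1-p_a/r)$, i.e. the absurd value $P_\de=k-\tfrac1r\sum_{a=1}^k p_a>1$ (already visible at $k=3$). The true answer is obtained only after subtracting, row by row, the diagonal discrepancy $\{S_{f(i)}\neq[k]\}$; by multilinearity these corrections expand into a signed sum of minors in which the rows of a subset $D\subseteq[k-1]$ are forced to be fixed points of the permutation and contribute the events $\{S_{f(i)}\neq[k]\}$.

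The crux, then, is to evaluate this correction sum and show that $\Pdet(L_\de)$ collapses to the single term $|\mM_{\pp,r-1}|/|\mS_{\pp,r}|$. I expect this to be the hard step, and the likely reason the statement is only conjectural: the corrections are tied to the wraparound loop $\{S_{f(i)}=[k]\setminus\{i\}\}$, which couples the diagonal to the cyclic structure and does not respect the sign-reversing involutions (Lemmas~\ref{lem:W0}--\ref{lem:Z0}) that eliminated the off-diagonal contributions in the case $\al$. Two alternative routes seem worth trying before grinding through the corrections. First, a reduction to an already-proven case: on $\mM_{\pp,r}$ one has the pointwise identity $\ga(i,S_{f(i)})=\de(i,S_{f(i)})-1$, and the target probabilities $\Pr(S_1=\emptyset)$ for $\de$ and $\Pr(S_1=\{2,\dots,k\})$ for $\be$ are interchanged by the complementation $S\mapsto[k]\setminus S$ (which sends $p_j$ to $r-p_j$); either symmetry might transport case~$\de$ to case~$\be$ or case~$\ga$ directly. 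Second, one could seek a ``moving conditions between the $S_t$'' argument in the spirit of the $k=3$ computation of Section~\ref{sec:intro}, proving $\sum_{T\in\mT_k}\Pr\big(G_\de(\SS,\vp)=T\big)=|\mM_{\pp,r-1}|/|\mS_{\pp,r}|$ by a direct bijection and thereby bypassing the matrix-tree theorem altogether.
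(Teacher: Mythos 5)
You set out to prove Conjecture~\ref{conj:casega}, and the first thing to say is that the paper itself contains no proof of it: the authors state explicitly that they were unable to prove it, offering only the determinantal reduction $P_\de(\pp,r)=\frac{|\mS_{\pp,r}|}{|\mM_{\pp,r}|}\Pdet(M_\de)$ with $M_{\de,i,j}=I_{i,j}^{f(i)}-J_{i-1,j}^{f(i)}$, its reformulation~\eqref{eq:expressionD}, and the stronger Conjecture~\ref{conj:pdetMD} which would imply it. Your proposal likewise stops short of a proof and says so honestly, so there is no completed argument to certify; but the partial analysis you do give is correct and essentially coincides with the paper's partial progress. The factorization $E_{\de,i,j}=E_{\al,i,j}\cap\{i\notin S_{f(i)}\}$, the loop event $E_{\de,i,i}=\{i\in S_{f(i)}\}\cup\{S_{f(i)}=[k]\setminus\{i\}\}$, and the off-diagonal equivalence $L_{\de,i,j}\sim I_{i,j}^{f(i)}-I_{i-1,j}^{f(i)}-I_{i,j-1}^{f(i)}+I_{i-1,j-1}^{f(i)}$ all check out. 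Your diagnosis of the obstruction --- that the clean double-difference matrix overshoots the true Laplacian by $\{S_{f(i)}\neq[k]\}$ per row, so the naive computation would return the impossible value $\sum_{a=1}^k(1-p_a/r)\geq 1$ --- is exactly the discrepancy the paper absorbs by writing $J_{i-1,j}^{f(i)}$ rather than $I_{i-1,j}^{f(i)}$ in $M_\de$ (the two differ only in the wrap-around column $j=i-1$, where $J_{i-1,i-1}^{f(i)}=\{S_{f(i)}=[k]\}$ replaces $\Om$). Your proposed ``correction sum'' is then in substance the paper's expansion~\eqref{eq-mult-gamma}, and the collapse you could not establish is precisely what Conjecture~\ref{conj:pdetMD} asserts; the paper verifies it only at the two extremes $D=\emptyset$ (via Proposition~\ref{prop:detMal}) and $D=[k-1]$ (via Proposition~\ref{prop:detMbe}).

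Two caveats on your speculative alternatives, neither of which you claimed as fact. The complementation $S\mapsto[k]\setminus S$ does not preserve $\mM_{\pp,r}$ (complements of proper subsets are the nonempty subsets), and the complement of the event $\{S_1=\emptyset\}$ is $\{S_1=[k]\}$, not $\{S_1=\{2,\ldots,k\}\}$, so the stated symmetry with case~$\be$ fails as written. Similarly, the identity $\ga(i,S)=\de(i,S)-1$ (valid for $S\neq[k]$) shifts every arc head cyclically by one, an operation that does not preserve treeness, so it does not transport case~$\ga$ to case~$\de$ directly. The bottom line: you did not prove the statement, but neither did the authors --- it is genuinely open --- and your analysis correctly identifies why the machinery of Sections~\ref{sec:determinantal-expressions} and~\ref{sec:computing-determinant} does not go through unchanged, which is consistent with the paper's own assessment.
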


Although we were unable to prove Conjecture \ref{conj:casega}, we obtained the following analogue of Proposition~\ref{prop:second-det}:
\begin{eqnarray*}
P_\de(\pp,r)&= \ds \frac{|\mS_{\pp,r}|}{|\mM_{\pp,r}|}\Pdet(M_\de),
\end{eqnarray*}
where $M_{\de} = \l(M_{\de,i,j}\r)_{i,j \in [k]}$ is the matrix of the generalized event defined by  $M_{\de,i,j}=\ds I_{i,j}^{f(i)}-J_{i-1,j}^{f(i)}$ for  $i\in[k-1]$,  and  $M_{\de,k,j}=\Om$.
Thus Conjecture~\ref{conj:casega} is equivalent to the following identity: 
\begin{equation}\label{eq:expressionD}
\Pdet(M_\de)=\ds\frac{|\mM_{\pp,r-1}|}{|\mS_{\pp,r}|}.
\end{equation}

One approach to try to prove \eqref{eq:expressionD} is to use the multilinearity of $\Pr$-determinants.
For $D\subseteq [k-1]$ and $j\in [k]$, we denote $M^D=\l(M_{i,j}^D\r)_{i,j\in [k]}$, where $M_{i,j}^D=J_{i-1,j}^{f(i)}$ if $i\in D$, $M_{i,j}^D=I_{i,j}^{f(i)}$ if $i\in[k-1]\setminus D$, and $M_{k,j}^D=\Om$. By multilinearity of $\Pr$-determinants,
\begin{equation} \label{eq-mult-gamma}
\Pdet(M_\de)=\sum_{D\subseteq [k-1]}(-1)^{|D|}\Pdet\l(M^D\r).
\end{equation}
Calculations suggest the following formula for the $\Pr$-determinant of $M^D$:
\begin{conjecture} \label{conj:pdetMD}
For all $D\subseteq [k-1]$,
$$\Pdet(M^D) = \frac{|\mM_{\pp,r}|}{|\mS_{\pp,r}|} \times \Pr( k \not\in S_1, D\subseteq S_1 \mid \SS \in \mM_{\pp,r}). $$
\end{conjecture}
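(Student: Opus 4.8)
The plan is to prove Conjecture~\ref{conj:pdetMD} by adapting the method of Section~\ref{sec:detMal} (the case $\ze=\al$), to which it specializes when $D=\emptyset$: one has $M^{\emptyset}=M_\al$, and the conjectured value becomes $\frac{|\mM_{\pp,r}|}{|\mS_{\pp,r}|}(1-p_k/r)$, which is precisely $\Pdet(M_\al)$ by Proposition~\ref{prop:detMal}. As in that section, I would start from~\eqref{eq:expand-Pdet}, writing $\Pdet(M^D)=\frac{1}{|\Om|}\sum_{(\SS,f,\pi)\in\Om(M^D)}\eps(\pi)$, where $(\SS,f,\pi)$ lies in $\Om(M^D)$ exactly when $K_{i,\pi(i)}\subseteq S_{f(i)}$ for every $i\in[k-1]$, with $K_{i,j}=\,]i,j]$ for $i\notin D$ and $K_{i,j}=\,]i-1,j]$ (read as $[k]$ when $j=i-1$) for $i\in D$; thus each index of $D$ contributes a \emph{shifted} row $J^{f(i)}_{i-1,\bullet}$.

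The first step is to isolate a main term by the telescoping of~\eqref{eq:decompositionMal}. Using multilinearity in the rows $i\notin D$ with $i\le k-2$, I would split the diagonal entry $I^{f(i)}_{i,i}=\Om$ as $(I^{f(i)}_{i,i}-J^{f(i)}_{i,i})+J^{f(i)}_{i,i}$, where the first summand is equivalent to ``$S_{f(i)}\neq[k]$'' and will assemble, using that $f$ is onto, into the condition $\SS\in\mM_{\pp,r}$. Keeping only the first summand in every such row diagonalizes those rows, so that the surviving permutations fix their indices and $\Pdet(M^D)$ collapses to a sub-$\Pr$-determinant over the rows and columns indexed by $D\cup\{k-1,k\}$. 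Since the $D$-rows carry the shifted events $J^{f(i)}_{i-1,\bullet}$ and the last row is all $\Om$, this residual determinant is of the same nature as the matrices $M_\be$ and $M_\ga$ of Sections~\ref{sec:detMbe}--\ref{sec:detMde}; I would evaluate it by the same involution technique, expecting it to produce the condition ``$D\subseteq S_1$ and $k\notin S_1$'', and then use the uniformity of $f$ over $[r]$ to rewrite the whole main term as $\frac{|\mM_{\pp,r}|}{|\mS_{\pp,r}|}\Pr(k\notin S_1,\,D\subseteq S_1\mid\SS\in\mM_{\pp,r})$, exactly as the factor $1-p_k/r$ was obtained in Lemma~\ref{lem:Mkal}.

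The second step is to show that the discarded terms vanish, i.e.\ that whenever some row $i\notin D$ (with $i\le k-2$) contributes its summand $J^{f(i)}_{i,i}$, the resulting $\Pr$-determinant is zero. For each such matrix I would pair the switched row $i$ with the row $i+1$, and two sub-cases arise, both modeled on Section~\ref{sec:detMal}. If $i+1\in D$, the two rows have \emph{identical} form $\{\,]i,\bullet]\subseteq S_{f(\cdot)}\}$ and differ only in the surjection value, so the bare swap $\phi(\SS,f,\pi)=(\SS,f\circ(i,i{+}1),\pi\circ(i,i{+}1))$ is a sign-reversing involution, as in the Claim of Section~\ref{sec:detMde}. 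If $i+1\notin D$, the two rows differ by the single element $i{+}1$, and I would reproduce the partition of the fiber into the pieces $\mW,\mX,\mY,\mZ$ and the element-transfer involutions of Lemmas~\ref{lem:W0}--\ref{lem:Z0}, transferring $i{+}1$ between $S_{f(i)}$ and $S_{f(i+1)}$ and using the auxiliary transpositions there.

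The principal obstacle is that, unlike in Section~\ref{sec:detMal} where a single shifted row was ever present, the set $D$ may contain several indices, so many shifted rows persist throughout the argument. The element-transfer involutions move integers of $[k]$ among the subsets $S_t$, and one must check that such a move never violates a constraint imposed by another $D$-row sharing the same value of $f$; the sets $H_{f,\pi,j}$ and the cases $\mW,\mX,\mY,\mZ$ would then have to be redefined so as to absorb the shifted contributions $]i-1,\pi(i)]$ coming from the indices $i\in D$. A second, independent difficulty is the main-term computation: the residual block couples the subsets $S_{f(i)}$ for all $i\in D$, whereas the conjecture concerns the single subset $S_1$, so evaluating that block and collapsing its constraints onto one common subset amounts to nesting a $\be$-type involution argument inside the $\al$-type telescoping. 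The case $D=\emptyset$ reduces exactly to Proposition~\ref{prop:detMal}, and I expect the interaction of several shifted rows with the element-transfer involutions to be the genuinely new ingredient.
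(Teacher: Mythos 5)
First, a point of order: this statement is not a proved result in the paper. It is Conjecture~\ref{conj:pdetMD}, which the authors explicitly leave open; the paper only observes that it holds at the two endpoints $D=\emptyset$ (where $M^D=M_\al$ and the claim is exactly Proposition~\ref{prop:detMal}) and $D=[k-1]$ (where $M^D$ is $M_\be$ up to cyclic shifts of rows and columns, so the claim follows from Proposition~\ref{prop:detMbe}). So there is no proof in the paper to compare against, and your proposal does not close the gap either: it is a strategy whose two decisive steps are deferred, and those deferred steps are precisely the open content of the conjecture, not routine verifications.

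Concretely, the gaps are these. In your vanishing step, the sub-case $i+1\in D$ is indeed fine (the two rows then have the identical form $]i,\cdot\,]$ with the convention $[k]$ at column $i$, so the bare swap $f\mapsto f\circ(i,i+1)$, $\pi\mapsto\pi\circ(i,i+1)$ is a sign-reversing involution, as in Section~\ref{sec:detMde}). But in the sub-case $i+1\notin D$ you invoke Lemmas~\ref{lem:W0}--\ref{lem:Z0} essentially verbatim, and they do not transfer: the involution of Lemma~\ref{lem:X0} removes the element $i+1$ from $S_{f(i)}$, which can destroy a constraint $]j-1,\pi(j)]\subseteq S_{f(j)}$ imposed by a shifted row $j\in D$ with $f(j)\in\{f(i),f(i+1)\}$. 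The paper's cases $\mY$ and $\mZ$ are designed to absorb exactly this sort of interference, but their proofs rest on the position estimates \eqref{eq:positionsClaim2} and \eqref{eq:positionsClaim3}, which are derived from the unshifted form $K_{b,\pi(b)}=\,]b,\pi(b)]$ of the interfering row; for a shifted row the relevant set is $]b-1,\pi(b)]$, those estimates fail, and the whole case partition $\mW,\mX,\mY,\mZ$ has to be rebuilt --- you acknowledge this but give no construction, and it is not known that one exists. Worse, your plan presupposes that every discarded term vanishes individually for every $D$; nothing in the multilinearity expansion forces this, and for intermediate $D$ the discarded terms could a priori cancel only in aggregate, or contribute to the main term, in which case the architecture of the proof (main term plus individually vanishing corrections) is wrong from the start. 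Finally, in your main-term step the residual block imposes conditions on several \emph{distinct} subsets $S_{f(i)}$, $i\in D\cup\{k-1\}$, whereas the conjectured value is a probability concerning the single subset $S_1$ containing all of $D$ and avoiding $k$; in the paper this collapse never has to be performed, since in Lemmas~\ref{lem:Mkal} and~\ref{lem:Mkbe} the main term involves only one non-diagonal row besides the all-$\Om$ row, so there is no argument there to imitate, and you supply none. As it stands, your proposal establishes the conjecture only in the cases $D=\emptyset$ and $D=[k-1]$ already covered by the paper.
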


\noindent \textbf{Remark.}
Conjecture~\ref{conj:pdetMD} implies Conjecture~\ref{conj:casega} since substituting the conjectured formula for $\Pdet(M^D)$ in~\eqref{eq-mult-gamma} and doing inclusion-exclusion gives
$$
\Pr(M_\de) = \frac{|\mM_{\pp,r}|}{|\mS_{\pp,r}|} \times \Pr(S_1=\emptyset \mid \SS \in \mM_{\pp,r}).
$$

\noindent \textbf{Remark.} When $D=\emptyset$ then $M^D=M_{\alpha}$ as defined in Proposition~\ref{prop:second-det}. Hence, in this case Conjecture~\ref{conj:pdetMD} holds by Proposition~\ref{prop:detMal}. Similarly, when $D=[k-1]$ the matrix $M^D$ is the matrix $M_{\beta}$ as defined in Proposition~\ref{prop:second-det}, up to shifting cyclically the $k$ columns one position to the right and shifting the first $k-1$ rows one position down. Hence, in this case Conjecture~\ref{conj:pdetMD} holds by Proposition~\ref{prop:detMbe}.

\bigskip

\noindent \textbf{Acknowledgments.} We thank Omer Angel, Robin Pemantle, and Peter Winkler for interesting discussions.
\bibliographystyle{plain} 
\bibliography{biblio-random-arbo}

\end{document}